\tikzstyle{vertex}=[circle, draw, fill=black, minimum size=5.5pt, inner 
\newcommand{\vertex}{\node[vertex]}
\tikzstyle{lvertex}=[circle,white!80!black, draw, fill=black, 
\newcommand{\lvertex}{\node[lvertex]}
\tikzstyle{svertex}=[circle, draw, fill=black, minimum size=3.5pt, inner 
\colorlet{col1}{red!70!black}
\colorlet{col2}{green!60!black}
\colorlet{col3}{white!50!black}
\colorlet{col4}{orange!90!black}
\colorlet{col5}{blue!60!black}
\tikzstyle{sedge}=[thick]
\tikzstyle{edge}=[very thick,font=\bfseries]
\tikzstyle{edge1}=[edge,draw=col1]
\tikzstyle{edge2}=[edge,draw=col2]
\tikzstyle{edge3}=[edge,draw=col3]
\tikzstyle{edge4}=[edge,draw=col4]
\tikzstyle{edge5}=[edge,draw=col5]
\tikzstyle{edge_big}=[edge,line width=2.5pt]
\tikzstyle{edge1_big}=[edge_big,draw=col1]
\tikzstyle{edge2_big}=[edge_big,draw=col2]
\tikzstyle{edge3_big}=[edge_big,draw=col3]
\tikzstyle{edge4_big}=[edge_big,draw=col4]
\tikzstyle{edge5_big}=[edge_big,draw=col5]
\tikzstyle{lab1}=[col1]
\tikzstyle{lab2}=[col2]
\tikzstyle{lab3}=[col3]
\tikzstyle{lab4}=[col4]
\tikzstyle{lab5}=[col5]
\newcommand{\colcA}{{\color{col1}\textbf{r}}}
\newcommand{\colcB}{{\color{col2}\textbf{g}}}
\newcommand{\colcC}{{\color{col3}\textbf{d}}}
\newcommand{\colcD}{{\color{col4}\textbf{o}}}
\newcommand{\colcE}{{\color{col5}\textbf{b}}}
\theoremstyle{plain}
\newtheorem{lemma}{Lemma}[section]
\newtheorem{proposition}[lemma]{\textbf{Proposition}}
\newtheorem{theorem}[lemma]{\textbf{Theorem}}
\newtheorem{corollary}[lemma]{\textbf{Corollary}}
\theoremstyle{definition}
\newtheorem{definition}[lemma]{\textbf{Definition}}
\newtheorem{example}[lemma]{\textbf{Example}}
\newtheorem*{notation}{\textbf{Notation}}
\newtheorem*{assumption}{\textbf{Assumption}}
\newtheorem*{terminology}{\textbf{Terminology}}
\newtheorem{remark}[lemma]{Remark}
\newcommand{\N}{\mathbb{N}}
\newcommand{\Z}{\mathbb{Z}}
\newcommand{\Q}{\mathbb{Q}}
\newcommand{\R}{\mathbb{R}}
\newcommand{\C}{\mathbb{C}}
\newcommand{\K}{\mathbb{K}}
\newcommand{\p}{\mathbb{P}}
\newcommand{\edg}{\mcal{E}}
\newcommand{\wt}{\mathrm{wt}}
\newcommand{\Spec}{\operatorname{Spec}}
\newcommand{\Lam}{\operatorname{Lam}}
\newcommand{\leftquot}[2]{{}^{#1}\!{#2}}
\newcommand{\rightquot}[2]{{#2}^{#1}}
\newcommand{\tth}{\thinspace}
\newcommand{\subtract}{\mathop{\backslash}}
\newcommand{\bigsubtract}{\mathop{\big\backslash}}
\newcommand{\quotient}{\mathop{/}}
\newcommand{\bigquotient}{\mathop{\big/}}
\newcommand\bigdotcup[2]{\mathrel{\text{
    \setbox0\hbox{$\bigcup_{#1}^{#2}$}%
    \rlap{\hbox to \wd0{\hss\raisebox{4.5\height}{\large\bfseries .}\hss}}\box0
}}}
\newcommand\dotcup{\mathrel{\text{
    \setbox0\hbox{$\bigcup$}%
    \rlap{\hbox to \wd0{\hss\raisebox{3.5\height}{\large\bfseries .}\hss}}\box0
}}}
\newcommand{\mscr}{\mathscr}
\newcommand{\mcal}{\mathcal}
\renewcommand{\hat}{\widehat}
\title{The number of realizations of a Laman graph}
\author[J. Capco]{%
Jose Capco$^{\dagger, \ast}$}
\author[M. Gallet]{
Matteo Gallet$^{\ast, \circ}$}
\author[G. Grasegger]{
Georg Grasegger}
\author[C. Koutschan]{
Christoph Koutschan$^{\ast, \ddag}$}
\author[N. Lubbes]{
Niels Lubbes$^{\circ}$}
\author[J. Schicho]{
Josef Schicho$^{\ast, \circ}$}
\thanks{The final version of this paper will appear in SIAM Journal on 
Applied Algebra and Geometry (SIAGA). A short summary of this paper 
previously appeared in the conference proceedings~\cite{EurocombVersion}.}
\thanks{$^\dagger$ Supported by the Austrian Science Fund (FWF): P28349}
\thanks{$^\ast$ Supported by the Austrian Science Fund (FWF): W1214-N15, 
Project DK9} 
\thanks{$^\circ$ Supported by the Austrian Science Fund (FWF): P26607}
\thanks{$\ddag$ Supported by the Austrian Science Fund (FWF): F5011-N15}
\address[JC, JS]{Research Institute for Symbolic Computation (RISC), Johannes 
Kepler University}
\email{\{jcapco, jschicho\}@risc.jku.at}
\address[MG, GG, CK, NL]{Johann Radon Institute for Computational and Applied 
Mathematics (RICAM), Austrian Academy of Sciences}
\email{\{matteo.gallet, georg.grasseger, 
\newline \phantom{mmmmmmmmmmmmmmmmm} 
christoph.koutschan, niels.lubbes\}@ricam.oeaw.ac.at}
\begin{document}

\begin{abstract}
 Laman graphs model planar frameworks that are rigid for a general choice 
of distances between the vertices. There are finitely many ways, up to 
isometries, to realize a Laman graph in the plane. Such realizations can be 
seen 
as solutions of systems of quadratic equations prescribing the 
distances between pairs of points. Using ideas from algebraic and tropical 
geometry, we provide a recursive formula for the 
number of complex solutions of such systems. 
\end{abstract}
\keywords{Laman graph, Minimally rigid graph, Tropical geometry, Euclidean 
embedding, Puiseux series, Graph realization, Graph embedding}
\maketitle

\section*{Introduction}

For a graph~$G$ with edges~$E$, we consider the set of all its realizations
in the plane, such that the lengths of the edges coincide with some
prescribed edge labeling $\lambda\colon E\rightarrow \R_{\geq0}$. 
Edges and vertices are allowed to overlap in such a realization.
For example, suppose that $G$ is the complete graph on four vertices minus one 
edge.
\Cref{figure:realizations} shows all possible realizations of $G$ up to 
rotations and translations, for a particular given edge labeling.
\begin{figure}[H]
\begin{center}
\begin{tikzpicture}[scale=0.6]
 \vertex (a1) at (0,-2) {};
 \vertex (b1) at (0,0) {};
 \vertex (c1) at (2,0) {};
 \vertex (d1) at (2,1) {};
 \begin{scope}[xshift=5cm]
	\vertex (a2) at (0,2) {};
	\vertex (b2) at (0,0) {};
	\vertex (c2) at (2,0) {};
	\vertex (d2) at (2,-1) {};
 \end{scope}
 \begin{scope}[xshift=10cm]
	\vertex (a3) at (0,2) {};
	\vertex (b3) at (0,0) {};
	\vertex (c3) at (2,0) {};
	\vertex (d3) at (2,1) {};
 \end{scope}
 \begin{scope}[xshift=15cm]
	\vertex (a4) at (0,-2) {};
	\vertex (b4) at (0,0) {};
	\vertex (c4) at (2,0) {};
	\vertex (d4) at (2,-1) {};
 \end{scope}
 \path[edge] (a1)edge(b1) (b1)edge(c1) (c1)edge(a1) (b1)edge(d1) (c1)edge(d1);
 \path[edge] (a2)edge(b2) (b2)edge(c2) (c2)edge(a2) (b2)edge(d2) (c2)edge(d2);
 \path[edge] (a3)edge(b3) (b3)edge(c3) (c3)edge(a3) (b3)edge(d3) (c3)edge(d3);
 \path[edge] (a4)edge(b4) (b4)edge(c4) (c4)edge(a4) (b4)edge(d4) (c4)edge(d4);
\end{tikzpicture}
\end{center}
\caption{Realizations of a graph up to rotations and translations.}
\label{figure:realizations}
\end{figure}
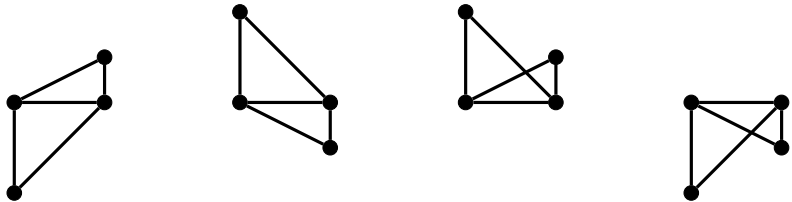
We say that a property holds for a general edge labeling if it holds for all 
edge labelings belonging to the complement of a proper algebraic subset of the 
set of all edge labelings. In this paper we address the following problem:
 \begin{quote}
 {\it For a given graph, determine the number of realizations, up to 
 rotations and translations, for a general edge labeling.}
 \end{quote}

The realizations of a graph can be considered as structures in the plane, which 
are 
comprised of rods connected by rotational joints.
If a graph with an edge labeling admits infinitely (finitely) many realizations 
up to rotations and translations,
then the corresponding planar structure is flexible (rigid), see 
\Cref{figure:starrheit}.

\begin{figure}[H]
  \begin{center}
    \begin{subfigure}[b]{0.3\textwidth}
      \begin{center}
        \begin{tikzpicture}[scale=1.3]
                    \vertex (a) at (0,0) {};
                    \vertex (b) at (1,0) {};
                    \vertex (c) at (0.5,0.866025) {};
                    \vertex (d) at (1.5,0.866025) {};
                    \begin{scope}
                        \vertex[black!40!white] (c2) at (0.7,0.72) {};
                        \vertex[black!40!white] (d2) at (1.7,0.72) {};
                        \draw[edge,black!40!white,dashed] (a) -- (b) -- (d2) -- 
(c2) -- (a) -- cycle;
                    \end{scope}
                    \draw[edge] (a) -- (b) -- (d) -- (c) -- (a) -- cycle;
                \end{tikzpicture}
                \caption{flexible}
                \label{figure:starrheit:flexible}
      \end{center}
    \end{subfigure}
    \begin{subfigure}[b]{0.3\textwidth}
      \begin{center}
                \begin{tikzpicture}[scale=1.3]
                    \vertex (a) at (0,0) {};
                    \vertex (b) at (1,0) {};
                    \vertex (c) at (0.5,0.866025) {};
                    \vertex (d) at (1.5,0.866025) {};
                    \draw[edge] (a) -- (b) -- (d) -- (c) -- (a) -- cycle;
                    \draw[edge] (b) -- (c);
                \end{tikzpicture}
                \caption{rigid}
                \label{figure:starrheit:rigid}
      \end{center}
    \end{subfigure}
    \begin{subfigure}[b]{0.3\textwidth}
      \begin{center}
        \begin{tikzpicture}[scale=1.3]
                    \vertex (a) at (0,0) {};
                    \vertex (b) at (1,0) {};
                    \vertex (c) at (0.5,0.866025) {};
                    \vertex (d) at (1.5,0.866025) {};
                    \draw[edge] (a) -- (b) -- (d) -- (c) -- (a) -- cycle;
                    \draw[edge] (b) -- (c);
                    \draw[edge] (a) -- (d);
                \end{tikzpicture}
                \caption{rigid (overdetermined)}
                \label{figure:starrheit:overdetermined}
      \end{center}
    \end{subfigure}
  \end{center}
  \caption{Graphs and their state of rigidity}
  \label{figure:starrheit}
\end{figure}
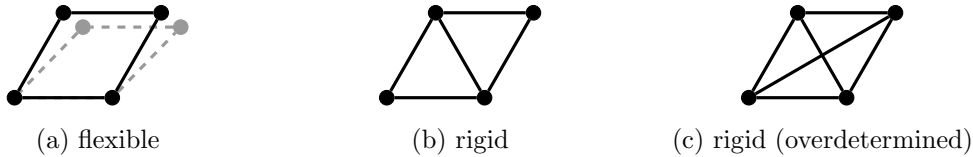

\subsection*{Historical notes}
The study of rigid structures, also called frameworks, was originally
motivated by mechanics and architecture, and goes back as early as the 19th
century to the works of James Clerk Maxwell, August Ritter, Karl Culmann,
Luigi Cremona, August F\"oppl, and Lebrecht Henneberg.
Nowadays, there is still a considerable interest in rigidity
theory~\cite{CombinatRigidity, Connelly1993} due to various applications in 
natural science and engineering; for an exemplary overview, see the conference 
proceedings ``Rigidity Theory and 
Applications''~\cite{RigidityTheoryApplications}.  Let
us just highlight three application areas that are covered there: In materials
science the rigidity of crystals, non-crystalline solids, glasses, silicates,
etc.\  is studied; among the numerous publications in this area we can mention
~\cite{BorceaStreinuCrystal,JacobsHendrickson}.  In
biotechnology one is interested in possible conformations of proteins and
cyclic molecules~\cite{JacobsEtAlProtein}, in particular to the
enumeration of such conformations~\cite{LibertiEtAl,EmirisMourrain}. In
robotics, one aims at computing the configurations of mechanisms, such as 6R
chains or Stewart-Gough platforms.  For the former, the 16 solutions of the
inverse kinematic problem have been found by using very elegant arguments from
algebraic geometry~\cite[Section~11.5.1]{Selig2005}.  For the latter, the 40
complex assembly modes have been determined by algebraic
geometry~\cite{Ronga1995} or by computer algebra~\cite{FaugereLazard1995};
\mbox{Dietmaier}~\cite{Dietmaier1998} showed that there is also an assignment of 
the
parameters such that all $40$~solutions are real. 
Recently, connections between rigidity theory and incidence problems have been
established~\cite{Raz2016}.

\subsection*{Pollaczek-Geiringer's and Laman's characterization}

A graph is called generically rigid (or isostatic) if a general edge labeling 
yields a rigid realization. No edge in a generically rigid graph can be 
removed without losing rigidity. This is why such graphs are also called 
minimally rigid in the literature. Note that the graph in 
\Cref{figure:starrheit:overdetermined} is not generically rigid, while the 
one in  \Cref{figure:starrheit:rigid} is. Hilda 
Pollaczek-Geiringer~\cite{Geiringer1927} characterized this property in terms 
of 
the number of edges and vertices of the graph and its subgraphs. The same 
characterization can be found in a paper of Gerard Laman~\cite{Laman1970} more 
than 40 years later. 
Unfortunately, the results of Pollaczek-Geiringer have been unnoticed until 
recently.
Nowadays, these objects are known as Laman graphs; since this terminology is 
well-known, we stick to it in this paper.

\subsection*{State of the art}
All realizations of a Laman graph with an edge labeling
can be recovered as the solution set of a system of algebraic equations, 
where the edge labels can be seen as parameters. 
Here, we are interested in the number of 
complex solutions of such a system, up to an equivalence relation coming from 
direct planar isometries; this number is the same for any general choice of 
parameters, 
so we call it the {\em Laman number} of the graph. For some graphs up to~$8$ 
vertices, this number has been computed using random values for the 
parameters~\cite{JacksonOwen2012} --- this means that it is very likely, but 
not 
absolutely certain, that these computations give the true numbers. Upper and 
lower bounds on the maximal Laman number for graphs with up to~$10$ vertices 
were found by analyzing the Newton polytopes of the equations and their mixed 
volumes~\cite{Emiris2009} using techniques from \cite{Steffens2010}. It has 
been 
proven~\cite{Borcea2004} that the Laman number of a Laman graph with~$n$ 
vertices is at most~$\binom{2n-4}{n-2}$.

\subsection*{Our contribution}
Our main result is a combinatorial algorithm that 
computes the number of complex realizations of any given Laman graph.
This is much more efficient than just solving the 
corresponding nonlinear system of equations.

We found it convenient to see systems of equations related to Laman graphs as 
special cases of a slightly more general type of systems, determined by 
\emph{bigraphs}. Roughly stated, a bigraph is a pair of graphs whose edges
are in bijection. Every graph can be turned into a bigraph by 
duplication and it is possible to extend the notion of Laman number also 
to bigraphs. The majority of these newly introduced systems do not have 
geometric significance: they are merely introduced to have a 
suitable structure to set up a recursive strategy. Our main result 
(\Cref{theorem:laman_number}) is a recursive formula 
expressing the Laman number of a bigraph in terms of Laman numbers of smaller 
bigraphs. Using this formula we succeeded in computing the exact Laman numbers
of graphs with up to~$18$ vertices --- a task that was absolutely out of reach
with the previously known methods.

The idea for proving the recursive formula is inspired by tropical geometry 
(see~\cite{Maclagan2015} or \cite[Chapter~9]{Sturmfels2002}): we consider the 
system of equations over the field of Puiseux series, and the inspection of the 
valuations of the possible solutions allows us to endow every bigraph with some 
combinatorial data that prescribes how the recursion should proceed. This
gives, therefore, a recursive formula for the right hand side of 
Corollary~3.6.16 
in~\cite{Maclagan2015} in our particular case.
Notice that the Laman number of a graph can be understood as the base degree 
(as defined at the end of Section~$1$ of~\cite{Rosen2014}) of the algebraic 
matroid 
associated to the variety parametrized by the square distances of the pairs
of points prescribed by the edges of the Laman graph.

\subsection*{Structure of the paper}
\Cref{laman_graphs} contains the statement 
of the problem and a proof of the equivalence of generic rigidity and Laman's 
condition in our setting. This section is meant for a general mathematical 
audience and requires almost no prerequisite. \Cref{bigraphs} analyzes 
the system of equations defined by a bigraph, and \Cref{bidistances} 
provides a general formulation for a recursive formula for the number of 
solutions of the 
system. Here, we employ some standard techniques in algebraic geometry, so the 
reader should be acquainted with the basic concepts in this area. In 
\Cref{formulas}, we specialize the general result provided at the end 
of \Cref{bidistances} and we give a recursive formula for the Laman 
number. It leads to an algorithm that is employed in \Cref{computations}
to derive some new results on the number of realizations of Laman graphs. These 
last two sections are 
again meant for a general audience, and they require only the knowledge of the 
objects and the results in \Cref{bigraphs,bidistances}, but 
not of the proof techniques used there.
For a condensed and streamlined version of this paper, we refer to the extended 
abstract~\cite{EurocombVersion}.

\subsection*{Acknowledgments}
We thank Jan Peter Sch\"afermeyer for making us aware of the work by Hilda
Pollaczek-Geiringer. We are grateful for the careful proofreading of the two
anonymous referees, and for their encouraging and constructive suggestions.
We also thank Bill Jackson for his careful reading resulting in an improvement
of some technical aspects.

\section{Laman graphs}
\label{laman_graphs}

In this section, by a \emph{graph} we mean a finite, connected, undirected 
graph without self-loops or multiple edges. We write $G = (V,E)$ to denote 
a graph~$G$ with set of vertices~$V$ and set of edges~$E$. An (unoriented) 
edge~$e$ 
between vertices~$u$ and~$v$ is denoted by~$\{u,v\}$.

\begin{definition}
\label{definition:labeling}
A \emph{labeling} of a graph~$G = (V,E)$ is a function 
$\lambda \colon E \longrightarrow \R$; the pair $(G, \lambda)$ 
is called a \emph{labeled graph}.
A \emph{realization} of~$G$ is a function $\rho\colon V \longrightarrow\R^2$.
We say that a realization~$\rho$ is \emph{compatible with}~a labeling~$\lambda$ 
if
for each edge $e \in E$ the Euclidean distance between its endpoints agrees with 
its label:
\begin{equation}\label{equation:compatibility}
 \lambda(e) \, = \, \bigl\| \rho(u)-\rho(v) \bigr\|^2, \quad \text{where } e = 
\{u,v\}.
\end{equation}
A labeled graph $(G, \lambda)$ is \emph{realizable} if and only if there 
is a realization compatible with~$\lambda$.
\end{definition}

\begin{definition}
\label{definition:equivalent_realization}
We say that two realizations~$\rho_1$ and $\rho_2$ of a graph~$G$ are 
\emph{equivalent} if and only if there exists a direct Euclidean 
isometry~$\sigma$ of~$\R^2$ such that~$\rho_1 =\sigma\circ\rho_2$;
a direct Euclidean isometry is an affine-linear
map $\R^2\longrightarrow\R^2$ that preserves distance and orientation in~$\R^2$.
\end{definition}

\begin{definition}
\label{definition:rigid_graph}
 A labeled graph $(G, \lambda)$ is called \emph{rigid} if it satisfies the
following properties:
\begin{itemize}
 \item
 $(G, \lambda)$ is realizable;
 \item
 there are only finitely many realizations compatible with~$\lambda$, up to 
equivalence.
\end{itemize}
\end{definition}


Our main interest is to count the number of realizations of generically rigid
graphs, namely graphs for which almost all realizable labelings induce rigidity.
Unfortunately, in the real setting, this number is not well-defined, since it
may depend on the actual labeling and not only on the graph.
In order to define a number that depends only on the graph, we switch to the
complex setting.  By this we mean that we allow complex labelings
$\lambda\colon E\longrightarrow\C$ and complex realizations $\rho\colon
V\longrightarrow\C^2$. In this case, the compatibility
condition~\Cref{equation:compatibility} becomes
\[
 \lambda(e) \, = \, \bigl\langle \rho(u)-\rho(v), \rho(u)-\rho(v) 
\bigr\rangle,\quad  e = \{u,v\},
\]
where $\left\langle x, y \right\rangle = x_1y_1 + x_2y_2$.  Moreover, we
consider ``direct complex isometries'', namely maps
\[
  \Bigl(\hspace{-4pt}\begin{array}{c} x \\[-4pt] y\end{array}\hspace{-4pt}\Bigr) 
\longmapsto
  A\,
  \Bigl(\hspace{-4pt}\begin{array}{c} x \\[-4pt] y\end{array}\hspace{-4pt}\Bigr)
  + b,
  \quad A\in\C^{2\times 2} \text{ and } b\in\C^2,
\]
where $A$ is an orthogonal matrix with determinant~$1$. Here, the word 
``isometries'' is an abuse of language, since in this case $\left\langle \cdot, 
\cdot \right\rangle$ is not an inner product. Notice that if we are
given a labeling $\lambda \colon E \longrightarrow \R$ for a graph~$G$ and two
realizations of~$G$ into~$\R^2$ that are not equivalent under real direct
isometries, then they are also not equivalent under complex
isometries. This means that counting the number of non-equivalent
realizations in~$\C^2$ delivers an upper bound for the number of non-equivalent
realizations in~$\R^2$.

\begin{terminology}
Given a graph $G = (V,E)$, the set of possible labelings $\lambda \colon E 
\longrightarrow \C$ forms a vector space, that we denote by~$\C^{E}$. In this 
way 
we are able to address the components of a vector~$\lambda$ in~$\C^{E}$ 
by edges~$e \in E$, namely by writing $\lambda = (\lambda_e)_{e \in E}$. Since 
$\C^{E}$ is a vector space, it is meaningful to speak about properties holding 
for a \emph{general} labeling: a property~$\mscr{P}$ holds for a general 
labeling if the set
\[
 \bigl\{ \lambda \in \C^{E} \, : \, \mscr{P}(\lambda) \text{ does not hold} 
\bigr\}
\]
is contained in a proper algebraic subset of~$\C^E$, i.e.\ a subset 
strictly contained in~$\C^E$ and defined by polynomial equations.
\end{terminology}

\begin{definition}
\label{definition:generically_rigid}
 A graph~$G$ is called \emph{generically realizable} if for a general 
labeling~$\lambda$ the labeled graph $(G, \lambda)$ is realizable. A graph~$G$ 
is called \emph{generically rigid} if for a general labeling~$\lambda$ the 
labeled graph $(G, \lambda)$ is rigid.
\end{definition}

\begin{remark}
\label{remark:generically_realizable}
 If a graph $G$ is generically realizable, then every subgraph~$G'$ of~$G$ is 
generically realizable. Every general labeling for~$G'$ can 
be extended to a general labeling for~$G$. Since by hypothesis 
$G$ has a compatible realization, the subgraph~$G'$ admits such a
realization as well.
\end{remark}

\begin{definition}
 \label{definition:laman_graph}
 A \emph{Laman graph} is a graph $G=(V,E)$ such that $\vert E\vert = 2\vert 
V\vert-3$, and for every subgraph $G'=(V',E')$ it holds $\vert E' \vert \leq
2\vert V' \vert-3$.
\end{definition}

We are going to see (\Cref{theorem:laman}) that Laman graphs are
exactly the generically rigid ones. Many different characterizations of this
property have appeared in the literature, for example by construction 
steps~\cite{Henneberg} (see \Cref{theorem:laman}), or in terms of 
spanning trees after doubling one edge~\cite{LovaszYemini} or after adding an
edge~\cite{Recski}, or in terms of three trees such that each vertex of the
graph is covered by two trees~\cite{Crapo2006}.
These characterizations can be used for decision algorithms on the minimal 
rigidity of a given 
graph~\cite{Bereg,JacobsHendrickson,DaescuKurdia,Gortler2010}.

For any graph $G=(V,E)$, there is a natural map~$r_{G}$ from the 
set~$\C^{2|V|}$ 
of its realizations to the set~$\C^{E}$ of its labelings:
\[
  r_G\colon \C^{2|V|} \longrightarrow \C^{E}, \quad
  (x_v, y_v)_{v \in V} \; \longmapsto \; \bigl( (x_u - x_v)^2 + (y_u - y_v)^2 
\bigr)_{\{u,v\} \in E}\,.
\]
Each fiber of~$r_G$, i.e.\ a preimage $r_G^{-1}(p)$ of a single 
point~$p\in\C^E$,
is invariant under the group of direct complex isometries. We define a 
subspace $\C^{2|V|-3} \subseteq \C^{2|V|}$ as follows: choose two distinguished
vertices~$\bar{u}$ and~$\bar{v}$ with $\{\bar{u},\bar{v}\}\in E$,
and consider the linear subspace defined by the equations $x_{\bar{u}} 
= y_{\bar{u}} = 0$ and ${x_{\bar{v}} = 0}$. In this way, the 
subspace~$\C^{2|V|-3}$ 
intersects every orbit of the action of isometries on a fiber 
of~$r_G$ in exactly two points: in fact, the equations do not allow
any further translation or rotation; however, for any labeling $\lambda \colon 
E \longrightarrow \C$ and for every realization in~$\C^{2|V|-3}$ compatible 
with~$\lambda$ there exists another realization, obtained by multiplying the 
first 
one by~$-1$, which is equivalent, but gives a different point 
in~$\C^{2|V|-3}$. The restriction of~$r_G$ to~$\C^{2|V|-3}$ 
gives the map 
\[ 
 h_G \colon \C^{2|V|-3} \longrightarrow \C^{E}. 
\]
The following statement follows from the construction of~$h_G$; notice that
the choice of~$\bar{u}$ and~$\bar{v}$ has no influence on the result.  Recall
that a map $f\colon X\longrightarrow Y$ between algebraic sets is called
\emph{dominant} if $Y\setminus f(X)$ is contained in an algebraic proper subset 
of~$Y$.

\begin{lemma}
\label{lemma:map_h}
 A graph~$G$ is generically rigid if and only if $h_G$ is dominant and a 
general fiber of~$h_G$ is finite. This is equivalent to saying that $h_G$ is 
dominant and $2|V| = |E|+3$.
\end{lemma}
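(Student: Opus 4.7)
The plan is to translate each condition in the definition of generic rigidity into a statement about the map $h_G$, using the fact — established in the paragraph immediately preceding the lemma — that the affine subspace $\C^{2|V|-3}$ meets every equivalence class of realizations (under direct complex isometries) in exactly two points. Once this dictionary is set up, the first equivalence becomes an unwinding of definitions and the second is a standard application of dimension theory for dominant morphisms between affine spaces.

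More concretely, I would first note that for every labeling $\lambda \in \C^E$ there is a natural two-to-one correspondence between the set of equivalence classes of realizations of $(G,\lambda)$ and the set $h_G^{-1}(\lambda)$: each isometry orbit of a realization in $r_G^{-1}(\lambda)$ meets $\C^{2|V|-3}$ in exactly two points (the realization normalized by the choice of $\bar u,\bar v$, and its multiple by $-1$). Consequently, $(G,\lambda)$ is realizable if and only if $\lambda \in h_G(\C^{2|V|-3})$, and the number of equivalence classes of realizations compatible with $\lambda$ is half the cardinality of $h_G^{-1}(\lambda)$. So $(G,\lambda)$ is rigid precisely when $h_G^{-1}(\lambda)$ is nonempty and finite.

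From this, the first equivalence follows: generic realizability of $G$ means that $h_G^{-1}(\lambda)$ is nonempty on the complement of a proper algebraic subset of $\C^E$, which is exactly the condition that $h_G$ be dominant; and generic finiteness of the number of equivalence classes translates to the general fiber of $h_G$ being finite. Combining both conditions gives generic rigidity on the left and the stated property of $h_G$ on the right.

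For the second equivalence, both source $\C^{2|V|-3}$ and target $\C^E$ are irreducible affine spaces, so if $h_G$ is dominant then the general fiber has pure dimension $(2|V|-3) - |E|$. This dimension is zero — equivalently, the general fiber is finite — if and only if $2|V|-3 = |E|$, i.e.\ $2|V| = |E|+3$; and if $2|V|-3 < |E|$ the map cannot be dominant at all, while if $2|V|-3 > |E|$ dominance forces positive-dimensional fibers. The only step with any genuine content is the two-to-one reduction of orbits to $\C^{2|V|-3}$, but this is already justified in the setup preceding the lemma, so the proof will be essentially a bookkeeping argument followed by one invocation of the fiber-dimension theorem.
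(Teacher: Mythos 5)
Your proposal is correct and matches the paper's approach: the paper's own proof is a one-line invocation of the fiber-dimension theorem for the dominant map $h_G$, relying (as you do) on the two-to-one correspondence between isometry orbits and points of $\C^{2|V|-3}$ established in the paragraph preceding the lemma. You simply spell out the bookkeeping that the paper leaves implicit.
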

\begin{proof}
 It is enough to notice that if $h_G$ is dominant, then the dimension of the 
general fiber is $2|V| - 3 - |E|$.
\end{proof}

We state Laman's theorem characterizing generically rigid graphs. A proof, 
which 
closely follows Laman's original argument in his paper~\cite{Laman1970}, can be 
found in \Cref{section:prooflaman}. For our 
purposes, we need a result that implies the existence of only a finite number 
of 
complex realizations, while the original statement deals with the real setting 
and proves that a given realization does not admit infinitesimal deformations.

\begin{figure}
\begin{center}
\begin{subfigure}[t]{0.45\textwidth}
\begin{center}
\begin{tikzpicture}[scale=0.65]
 \draw [dashed] (0,0) ellipse (2cm and 1cm);
 \vertex at (-1,0) [label=below:$u$] {};
 \vertex at (1,0) [label=below:$v$] {};
 \draw [dashed] (6,0) ellipse (2cm and 1cm);
 \vertex (t) at (6,2) [label=above:$t$] {};
 \vertex (u) at (5,0) [label=below:$u$] {};
 \vertex (v) at (7,0) [label=below:$v$] {};
 \path
  (u) edge[edge] (t)
  (t) edge[edge] (v)
 ;
\end{tikzpicture}
\end{center}
\caption{The first Henneberg rule: given any two vertices~$u$ and~$v$
(which may be connected by an edge or not), we add a 
vertex~$t$ and the two edges $\{u,t\}$ and $\{v,t\}$.}
\label{figure:henneberg_1}
\end{subfigure}
\qquad
\begin{subfigure}[t]{0.45\textwidth}
\begin{center}
\begin{tikzpicture}[scale=0.65]
 \draw [dashed] (0.7,0) ellipse (1.8cm and 1.8cm);
 \vertex at (0.7,1) [label=above:$w$] {};
 \vertex (u) at (-0.3,-0.5) [label=below:$u$] {};
 \vertex (v) at (1.7,-0.5) [label=below:$v$] {};
 \draw [dashed] (6,0) ellipse (1.8cm and 1.8cm);
 \vertex (uu) at (5,-0.5) [label=below:$u$] {};
 \vertex (vv) at (7,-0.5) [label=below:$v$] {}; 
 \vertex (ww) at (6,1) [label=above:$w$] {};
 \vertex (tt) at (9,1) [label=above:$t$] {};
 \path
   (u) edge[edge] (v)
   (uu) edge[edge] (tt)
   (vv) edge[edge] (tt)
   (ww) edge[edge] (tt)
 ;
\end{tikzpicture}
\end{center}
\caption{The second Henneberg rule: given any three vertices $u$, $v$, and~$w$ 
such that $u$ and $v$ are connected by an edge, we remove the edge $\{u,v\}$, 
we add a vertex~$t$ and the three edges $\{u,t\}$, $\{v,t\}$, and 
$\{w,t\}$.}
\label{figure:henneberg_2}
\end{subfigure}
\end{center}
\caption{Henneberg rules}
\end{figure}

\begin{theorem}
\label{theorem:laman}
Let $G$ be a graph. Then the following three conditions are equivalent:
\begin{enumerate}
\item\label{it:laman} $G$ is a Laman graph;
\item\label{it:generically} $G$ is generically rigid;
\item\label{it:henneberg}
$G$ can be constructed by iterating the two \emph{Henneberg rules} (see 
\Cref{figure:henneberg_1,figure:henneberg_2}), starting from the
graph that consists of two vertices connected by an edge.
\end{enumerate}
\end{theorem}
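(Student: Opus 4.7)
The plan is to prove the three equivalences in a triangle: (\ref{it:generically}) $\Rightarrow$ (\ref{it:laman}) via the algebraic machinery already set up in \Cref{lemma:map_h}, then (\ref{it:laman}) $\Rightarrow$ (\ref{it:henneberg}) by a purely combinatorial inductive argument, and finally (\ref{it:henneberg}) $\Rightarrow$ (\ref{it:generically}) by verifying that each Henneberg rule preserves dominance of the edge-length map~$h_G$.

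For (\ref{it:generically}) $\Rightarrow$ (\ref{it:laman}), I would start from \Cref{lemma:map_h}, which already gives $|E| = 2|V|-3$ together with dominance of $h_G$. The point is to promote the global equality to the subgraph inequality $|E'| \leq 2|V'|-3$. For any subgraph $G' = (V',E')$, the natural projection $\C^{2|V|-3} \twoheadrightarrow \C^{2|V'|}$ composed with $r_{G'}$ factors the edge-length map of the subgraph, so the image of $h_{G'}$ has dimension at most $2|V'|-3$. But the edges of $G'$ are a subset of the coordinates on which $h_G$ is dominant, so the coordinate projection $h_G(\C^{2|V|-3}) \to \C^{E'}$ is also dominant, forcing $|E'| \leq 2|V'|-3$.

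For (\ref{it:laman}) $\Rightarrow$ (\ref{it:henneberg}), the key combinatorial lemma is that every Laman graph with at least three vertices contains a vertex of degree $2$ or a vertex of degree $3$. This follows from the handshake identity $\sum_v \deg(v) = 2|E| = 4|V|-6$, which forces the minimum degree to be at most $3$; and the Laman count $|E'| \leq 2|V'|-3$ applied to $G' = G\setminus\{v\}$ rules out $\deg(v) \leq 1$. If there is a vertex of degree~$2$, removing it reverses a Henneberg~I step and yields a Laman graph on $|V|-1$ vertices. If not, one picks a vertex $v$ of degree~$3$ with neighbors $a, b, c$ and argues that at least one of the three graphs obtained by deleting $v$ and adding one of the edges $\{a,b\}$, $\{a,c\}$, $\{b,c\}$ is again Laman: if none were, each would violate the subgraph inequality, and a careful combinatorial argument (using that the new edge lies in an overfull subgraph in each case) would yield a subgraph of the original $G$ violating the Laman bound, a contradiction. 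This step is the main obstacle, because it requires the most delicate case analysis; it is essentially the heart of Laman's original paper.

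For (\ref{it:henneberg}) $\Rightarrow$ (\ref{it:generically}), by \Cref{lemma:map_h} it suffices to check that each Henneberg move preserves dominance of $h_G$ and the count $|E| = 2|V|-3$; the count is immediate from the combinatorics of the moves. For Henneberg~I, adding a new vertex $t$ joined to $u$ and $v$ adds two new coordinates $(x_t, y_t)$ and two new equations that cut $t$ as an intersection of two circles, which for generic circle centers and radii has finitely many nonempty solutions, so dominance lifts. For Henneberg~II the analysis is more subtle: one removes an edge $\{u,v\}$ (losing one equation) and adds a vertex $t$ adjacent to $u$, $v$, $w$ (gaining three equations and two coordinates). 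I would set up the Jacobian of $h_G$ at a generic point of the smaller graph, lift it to a realization of the larger graph by solving the three circle equations at $t$, and verify that the resulting Jacobian still has full rank; equivalently, one shows that a generic triple of radii from $u,v,w$ determines $t$ in finitely many ways once the positions of $u,v,w$ can be freely perturbed (having lost the edge $\{u,v\}$). Standard Bertini-type transversality arguments then promote this local fact to global dominance, completing the cycle.
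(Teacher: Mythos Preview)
Your triangle of implications matches the paper's exactly, and your arguments for (\ref{it:generically})$\Rightarrow$(\ref{it:laman}) and (\ref{it:laman})$\Rightarrow$(\ref{it:henneberg}) are essentially the paper's: dominance of~$h_{G'}$ for subgraphs gives the Laman count, and the reverse-Henneberg induction is the standard one (the paper simply cites Laman's original paper for the degree-3 case you flag as delicate, rather than reproving it).

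The one substantive divergence is Henneberg~II in (\ref{it:henneberg})$\Rightarrow$(\ref{it:generically}). You propose a Jacobian / infinitesimal-rigidity computation, which is the classical route: place~$t$ carefully (e.g.\ on the line through~$u$ and~$v$), and check by explicit linear algebra that the rigidity matrix gains rank~$2$ even though one row was removed. This is correct and well known over~$\R$, and carries over to~$\C$ since full rank of the differential at one point implies dominance. The paper instead argues directly in the complex-algebraic setting via the Cayley--Menger determinant: it removes the edge~$\{u,v\}$ to get a one-parameter family of realizations of~$G''$, tracks the resulting curve~$C$ of distance triples among $u,v,w$, and uses the Cayley--Menger relation to show that the union over~$C$ of admissible $(x,y,z)$ triples (the distances from~$t$) sweeps out all of~$\C^3$. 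Your ``equivalently'' clause is in fact a paraphrase of exactly this idea, but without the concrete tool that makes it go through. The Jacobian approach is shorter once one knows the right placement of~$t$; the Cayley--Menger approach avoids that choice and stays closer to the paper's global, parameter-counting viewpoint. Either fills the gap, but your sketch as written (``verify full rank'', ``Bertini-type transversality'') would need the explicit rigidity-matrix calculation to be complete.
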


Given a Laman graph, we are interested in the number of its realizations 
in~$\C^2$ that are compatible with a general labeling, up to equivalence. As we 
have already pointed out, the degree of the map~$h_G$ (namely, the cardinality 
of a fiber~$h_G^{-1}(p)$ over a general point~$p$) is twice the number of 
realizations of~$G$ compatible with a general labeling, up to 
equivalence. \Cref{lemma:map_h} and \Cref{theorem:laman} imply that the 
map~$h_G$ is dominant and its degree is finite. 

We now construct a map whose degree is exactly the number of equivalence 
classes. For this purpose, we employ a different way than in~$h_G$ to get rid 
of 
complex ``translations'' and ``rotations'': first, for the translations, we 
take 
a quotient of vector spaces, which can be interpreted as setting 
$x_{\bar{u}}=y_{\bar{u}}=0$ as for~$h_G$, or alternatively as moving the 
barycenter of a realization to the origin; second, we use projective 
coordinates 
to address the rotations. 
More precisely, in order to study the system of equations
\[
  (x_u - x_v)^2 + (y_u - y_v)^2 \, = \, \lambda_{uv} 
  \quad \text{for all } \{u,v\} \in E,
\]
which defines a realization of a Laman graph, we can regard the vectors 
$(x_u)_{u \in V}$ and $(y_u)_{u \in V}$ as elements of the space $\C^{V} 
\quotient \left\langle x_u = x_v \text{ for all } u,v \in V \right\rangle$. In 
this way, we are allowed to add arbitrary constants to all components~$x_u$ or 
to all components~$y_u$ without changing the representative in the quotient;
hence these vectors are invariant under translations. 
Moreover, if one performs the change of variables
\begin{equation}\label{equation:eta}
 (x_v)_{v \in V}, (y_v)_{v \in V} \quad \longmapsto \quad (x'_v := x_v + i 
\tth y_v)_{v \in V},\, (y'_v :=  x_v - i \tth y_v)_{v \in V},
\end{equation}
then the previous system of equations becomes
\[
  (x'_u - x'_v)(y'_u - y'_v) \, = \, \lambda_{uv} 
  \quad \text{for all } \{u,v\} \in E
\]
and the action of a complex rotation turns into the multiplication of the 
$x'_u$\hbox{-}coordi\-nates by a scalar in~$\C$, and of the 
$y'_u$\hbox{-}coordinates by its 
inverse. Thus, by considering $(x'_u)_{u \in V}$ and $(y'_u)_{u \in V}$ as 
coordinates in two different projective spaces, the points we obtain are 
invariant under complex rotations.
In order to employ these two strategies, we define
\[
 \p_\C^{|V|-2} := \p \Bigl( \C^{V} \big/ \; \bigl\langle(1,\dots,1)\bigr\rangle 
\Bigr) =
  \p \Bigl( \C^{V} \big/ \; \bigl\langle (x_v)_{v \in V} \, : \,  
  x_u = x_w \text{ for all } u,w \in V \bigr\rangle \Bigr)
\]
and the map 
\begin{equation}
\label{equation:f_G}
 \begin{array}{rrcl}
 f_G \colon & \p_\C^{|V|-2} \times \p_\C^{|V|-2} & \dashrightarrow & 
\p_\C^{|E|-1} \\
 & [(x_v)_{v \in V}], [(y_v)_{v \in V}] & \longmapsto & \Bigl( (x_u - x_v)(y_u 
- 
y_v) \Bigr)_{\{u,v\} \in E}
 \end{array},
\end{equation}
where $[\, \cdot \,]$ denotes the point in~$\p_\C^{|V|-2}$ determined by a 
vector
in~$\C^V$. Notice that the map~$f_G$ is well-defined, because the
quantities~$x_u - x_v$ depend, up to scalars, only on the
points~$[(x_v)_{v \in V}]$, and not on the particular choice of
representatives (and similarly for~$y_u - y_v$).  Note that $f_G$ may not
be defined everywhere, which is conveyed by the notation~$\dashrightarrow$.

\begin{lemma} 
\label{lemma:fiber}
For any Laman graph~$G$ the equality ${\deg}{\left( h_G \right)} = 2 \tth 
{\deg}{\left( f_G \right)}$ holds.
\end{lemma}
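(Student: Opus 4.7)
The plan is to interpret the degree of each map as a count of realizations modulo some equivalence and then compare. The text preceding the lemma already argues that $\deg(h_G)$ equals twice the number $N$ of realizations compatible with a general labeling up to direct complex isometry --- the factor $2$ arising from the involution $(x,y)\mapsto(-x,-y)$, which is a direct isometry preserving the linear slice $x_{\bar u}=y_{\bar u}=x_{\bar v}=0$ --- so it suffices to prove that $\deg(f_G)$ equals $N$ on the nose.

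For this, I would introduce the affine lift $\tilde f_G\colon\C^V\times\C^V\to\C^E$ given by the same formula $(x',y')\mapsto\bigl((x'_u-x'_v)(y'_u-y'_v)\bigr)_{\{u,v\}\in E}$, and establish two preliminary facts by direct computation: first, the change of variables~\eqref{equation:eta} turns a realization compatible with $\lambda$ into a solution of $\tilde f_G(x',y')=\lambda$, and vice versa; second, direct complex isometries act in the primed coordinates as $(x',y')\mapsto(\mu x'+c\mathbf{1},\,\mu^{-1}y'+c'\mathbf{1})$ with $(\mu,c,c')\in\C^*\times\C\times\C$, a three-parameter group under which $\tilde f_G$ is invariant.

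The heart of the argument is to exhibit a bijection between the fiber $f_G^{-1}([\lambda])$ of a general $[\lambda]$ and the $N$ equivalence classes of realizations compatible with $\lambda$. Given lifts $(x',y')\in\C^V\times\C^V$ of a fiber point, one has $\tilde f_G(x',y')=c\lambda$ for some $c\in\C^*$; since $\tilde f_G(s_1x',s_2y')=s_1s_2\,\tilde f_G(x',y')$, the independent rescalings allowed inside $\p_\C^{|V|-2}\times\p_\C^{|V|-2}$ can be used to normalize to $\tilde f_G(s_1x',s_2y')=\lambda$, and the remaining ambiguity $s_1s_2=1$ coincides precisely with the one-parameter rotation action. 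Combined with the translation quotient already built into $\p_\C^{|V|-2}$, this matches the full three-parameter isometry group.

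The main subtle point is the interaction between the projectivization of the target $\p_\C^{|E|-1}$ and the two independent rescalings available on the source: projectivizing the target absorbs exactly one of these two rescalings, leaving the one-parameter rotation. Once this is in place, the dimension match $2|V|-4=|E|-1$, guaranteed by the Laman equation $|E|=2|V|-3$, ensures that $\deg(f_G)$ is well-defined, and dominance together with finiteness of general fibers propagates from $h_G$ via \Cref{lemma:map_h} and \Cref{theorem:laman}. We conclude $\deg(f_G)=N$, and combined with $\deg(h_G)=2N$ this gives $\deg(h_G)=2\,\deg(f_G)$.
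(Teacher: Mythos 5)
Your proposal is correct, and it rests on the same key idea as the paper --- the change of variables~\Cref{equation:eta} that diagonalizes the linear part of the isometries --- but it organizes the count differently. The paper constructs a direct $2{:}1$ map $\eta\colon h_G^{-1}(\lambda)\to f_G^{-1}(\lambda)$ and verifies by an explicit computation (normalizing $\hat{x}_{\bar{v}}=1$, $\hat{y}_{\bar{v}}=-1$ and solving for the scalars $c,d$, which forces $c=d$ and $c^2=-\lambda_{\{\bar{u},\bar{v}\}}$) that each fiber of $\eta$ has exactly two elements; it never needs to name the isometry group in the primed coordinates. You instead factor both degrees through the orbit count $N$: you identify the direct isometries in primed coordinates as the three-parameter group $(\mu,c,c')$, observe that the translation part is absorbed by the quotient defining $\p_\C^{|V|-2}$ and that, after the target projectivization absorbs one of the two independent rescalings of lifts, the residual scaling $s_1s_2=1$ is exactly the rotation subgroup, whence $\deg(f_G)=N$; combined with $\deg(h_G)=2N$ from the slice argument preceding the lemma, the claim follows. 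Your route is more structural and makes transparent \emph{why} the factor is $2$ (the involution $-\mathrm{id}$ on the slice) and why $f_G$ counts equivalence classes on the nose --- indeed it directly yields the Corollary that follows the lemma in the paper --- at the cost of having to verify carefully that the scaling ambiguities match the isometry group; the paper's computation is more pedestrian but self-contained and checks the cardinality of $\eta^{-1}(q)$ with no appeal to group actions. One small point worth making explicit in your write-up: for a general labeling (all $\lambda_e\neq 0$) the lifts $x',y'$ of a solution are never constant vectors, so the points $[x'],[y']\in\p_\C^{|V|-2}$ are genuinely defined and your correspondence is surjective.
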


\begin{proof}
Recall that the degree is computed by counting the number of preimages of a 
general point in the codomain. Let therefore $\lambda \in \C^E$ be a general 
labeling and let $\{\bar{u}, \bar{v}\}$ be the edge used to define~$h_G$, so in 
particular we can suppose $\lambda_{\{\bar{u},\bar{v}\}} \neq 0$.
We show that there is a 2:1 map~$\eta$ from~$h_G^{-1}(\lambda)$ 
to~$f_G^{-1}(\lambda)$, where $\lambda \in \p_\C^{|E|-1}$ is the point defined 
by the values of~$\lambda \in \C^E$ as projective coordinates. The map~$\eta$
is defined according to the change of variables~\Cref{equation:eta}:
\[
  \eta \colon 
  h_G^{-1}(\lambda) \longrightarrow f_G^{-1}(\lambda), \quad
  (x_v)_{v \in V}, (y_v)_{v \in V} \; \longmapsto \;
  \bigl[(x_v + i \tth y_v)_{v \in V}\bigr],\bigl[(x_v - i \tth y_v)_{v \in 
V}\bigr].
\]
In other words, we just take the 
coordinates of the embedded vertices as projective coordinates and make a 
complex coordinate transformation, namely one that diagonalizes the linear part 
of the  isometries. The map~$\eta$ is well-defined, since the 
quantities~$(x_v + i \tth y_v)_{v \in V}$ and~$(x_v - i \tth y_v)_{v 
\in V}$ are never all zero because of the definition of the map~$h_G$.
For $q \in \p_\C^{|V|-2} \times \p_\C^{|V|-2}$ of the form $q = \bigl( 
[(\hat{x}_v)_{v \in V}], [(\hat{y}_v)_{v \in V}] \bigr)$ and such that 
$\hat{x}_{\bar{u}} \ne \hat{x}_{\bar{v}}$ and 
$\hat{y}_{\bar{u}} \ne \hat{y}_{\bar{v}}$, we choose coordinates 
$(\hat{x}_v)_{v \in V}, (\hat{y}_v)_{v \in V}$ such that 
$\hat{x}_{\bar{u}}=\hat{y}_{\bar{u}}=0$, $\hat{x}_{\bar{v}}=1$, and
$\hat{y}_{\bar{v}}=-1$. This is possible because we can add a constant vector 
to any of~$(\hat{x}_v)_{v \in V}$ or~$(\hat{y}_v)_{v \in V}$
without changing the point in $\p_\C^{|V|-2}\times\p_\C^{|V|-2}$. When $q 
\in f_G^{-1}(\lambda)$, every point in~$\eta^{-1}(q)$ is of the form 
$\bigl( (x_v)_{v \in V}, (y_v)_{v \in V} \bigr)$, where $x_{\bar{u}} = 
y_{\bar{u}} = 0$ and $x_{\bar{v}} = 0$ (recall the definition of the 
map~$h_G$). By definition of~$\eta$, we have that for all $v \in V$:
\[
 \left\{
 \begin{array}{rcl}
  x_v + i \tth y_v &=& c \tth \hat{x}_v, \\
  x_v - i \tth y_v &=& d \tth \hat{y}_v,
 \end{array}
 \right.
 \qquad \text{hence} \qquad
 \left\{
 \begin{array}{rcl}
  x_v &=& (c \tth \hat{x}_v + d \tth \hat{y}_v) / 2, \\
  y_v &=& (c \tth \hat{x}_v - d \tth \hat{y}_v) / 2i,
 \end{array}
 \right.
\]
for some constants $c,d \in \C$. Thus, for $v = \bar{v}$, we get the equation 
$0 = c-d$, which in turn implies that every point in~$\eta^{-1}(q)$ determines 
a 
realization of the form
\[ 
 \rho \colon V \longrightarrow \C^2, \qquad 
 v \; \longmapsto \; \left( c \tth \frac{\hat{x}_v + \hat{y}_v}{2}, c \tth 
\frac{\hat{x}_v - \hat{y}_v}{2i} \right),
\]
that must be compatible with~$\lambda$. By construction, the 
constant~$c$ must satisfy $c^2 = -\lambda_{\{\bar{u},\bar{v}\}}$, 
since $\lambda_{\{\bar{u},\bar{v}\}} = \left\langle \rho(\bar{u}) 
- \rho(\bar{v}),\rho(\bar{u}) - \rho(\bar{v}) \right\rangle$. 
There are exactly two such numbers~$c$, and this proves the statement.
\end{proof}

\begin{corollary}
The number of realizations of a Laman graph, compatible with a
general labeling and counted up to equivalence, is equal to the degree of the 
map~$f_G$.
\end{corollary}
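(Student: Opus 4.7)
The plan is to chain together two observations that are already essentially present in the preceding text, so that the corollary follows as an immediate consequence.

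First, I would recall the way $h_G$ was constructed: the subspace $\C^{2|V|-3} \subseteq \C^{2|V|}$ cut out by $x_{\bar{u}} = y_{\bar{u}} = 0$ and $x_{\bar{v}} = 0$ was chosen precisely because it meets every orbit of the direct complex isometries (acting on a fiber of $r_G$) in exactly two points, the second point being obtained from the first via multiplication by $-1$. Hence, for a general labeling $\lambda \in \C^E$, the fiber $h_G^{-1}(\lambda)$ splits into pairs, each pair corresponding to a single equivalence class of realizations compatible with $\lambda$. Since $G$ is Laman, \Cref{theorem:laman} and \Cref{lemma:map_h} guarantee that $h_G$ is dominant with finite general fibers, so this count makes sense, and we conclude $\deg(h_G) = 2N$, where $N$ denotes the number of equivalence classes of realizations compatible with a general labeling.

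Next, I would invoke \Cref{lemma:fiber}, which states that $\deg(h_G) = 2 \deg(f_G)$. Combining the two equalities gives $2N = 2 \deg(f_G)$, and hence $N = \deg(f_G)$, which is exactly the content of the corollary.

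There is essentially no obstacle: all the substantive work has already been carried out in setting up $h_G$ and $f_G$ and in proving \Cref{lemma:fiber}. The only point that deserves a sentence of justification is that for a general $\lambda$ the labeling $\lambda_{\{\bar{u},\bar{v}\}}$ can be taken to be nonzero, so the two points in each orbit intersected with $\C^{2|V|-3}$ are genuinely distinct; this was already implicitly used in the proof of \Cref{lemma:fiber}. The proof therefore amounts to one or two lines.
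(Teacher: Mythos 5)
Your argument is exactly the one the paper intends: the construction of $h_G$ gives $\deg(h_G)=2N$ because $\C^{2|V|-3}$ meets each isometry orbit in two points, and combining this with \Cref{lemma:fiber} yields $N=\deg(f_G)$. This matches the paper's (implicit) proof, so the proposal is correct and takes the same route.
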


\section{Bigraphs and their equations}
\label{bigraphs}

In this section we introduce the main concept of the paper, the one of 
\emph{bigraph}. Bigraphs are pairs of graphs whose edges are in bijection. 
Every graph determines a bigraph by simply duplicating 
it and considering the natural bijection between the edges. It is possible to 
associate to any bigraph a rational map as we did with the map~$f_G$ 
in~\Cref{equation:f_G}.
The reason for this duplication is that, in order 
to set up a recursive formula for the degree of~$f_G$, we want to be able to 
handle independently the two factors~$(x_u - x_v)$ and~$(y_u - y_v)$ that 
appear in its specification. To do this, we have to allow disconnected graphs 
with multiple edges. 

Notice that if we allow graphs with multiedges, then we have to give away the 
possibility to encode an edge via an unordered pair of vertices. Instead, we 
consider the sets~$V$ and $E$ of vertices and edges, respectively, to be 
arbitrary sets, related by a function $\tau \colon E \longrightarrow 
\mcal{P}(V)$, where $\mcal{P}$ denotes the power set, assigning to each edge 
its corresponding vertices. The image of an element $e \in E$ via~$\tau$ can be 
either a set of cardinality two, when $e$ connects two distinct vertices, or a 
singleton, when $e$ is a self-loop. This way of encoding graphs allows to use 
the same set for the edges of two graphs; this realizes formally the idea of 
prescribing a bijection between the edges of two graphs.

\begin{definition}
\label{definition:bigraph}
 A \emph{bigraph} is a pair of finite undirected graphs $(G, H)$ --- allowing 
several components, multiple edges and self-loops --- where $G = (V, \edg)$ and 
$H = (W, \edg)$. We denote by $\tau_G \colon \edg \longrightarrow \mcal{P}(V)$
and $\tau_H \colon \edg \longrightarrow \mcal{P}(W)$ the two maps assigning to 
each edge its vertices. The set~$\edg$ is called the set of \emph{biedges}.
For technical reasons, we need to order the vertices of edges in~$G$ or~$H$; 
therefore, we assume that there is a total order~$\prec$ given on the sets of 
vertices~$V$ and~$W$. An example of a bigraph is provided in 
\Cref{figure:4_laman}.
\end{definition}

Notice that a single graph $G = (V,E)$ can be turned into
a bigraph by considering the pair $(G,G)$, and by taking the set of 
biedges to be~$E$; the total order~$\prec$ is obtained by fixing any total 
order on~$V$ and duplicating it. Next, we extend a weakened version of the Laman
condition to bigraphs.

\begin{definition}
\label{definition:dim_graph}
For a graph $G=(V,E)$ we define the \emph{dimension} of~$G$ as
\[
 \dim(G) \, := \, |V| - |\{ \text{connected components of } G \}|.
\]
\end{definition}

\begin{remark}
\label{remark:laman_dimension}
 Since a Laman graph is connected by assumption, the condition $2|V| = |E| + 3$ 
can be rewritten as $2 \dim(G) = |E| + 1$.
\end{remark}

\begin{definition}
\label{definition:pseudo_laman}
Let $B = (G, H)$ be a bigraph with biedges~$\edg$, then we say that $B$ is 
\emph{pseudo-Laman} if
\[
 \dim(G) + \dim(H) \, = \, |\edg| + 1.
\]
\end{definition}
It follows from \Cref{remark:laman_dimension} that for 
a Laman graph $G$ the bigraph~$(G,G)$ is pseudo-Laman.

We introduce two operations that can be performed on a graph, starting from a 
subset of its edges: the subtraction of edges and the quotient by edges. We are 
going to use these constructions several times in our paper: subtraction is 
first used at the end of this section, while the quotient operation is mainly 
utilized starting from \Cref{bidistances}.

\begin{definition}
\label{definition:quotient_graphs}
 Let $G = (V, E) $ be a graph, and let $E' \subseteq E$. We define two 
new graphs, denoted $G \quotient E'$ and $G \subtract E'$, as follows.
An example for the operations is provided in 
\Cref{figure:quotient_graphs}.
\begin{itemize}
 \item
  Let $G'$ be the subgraph of $G$ determined by~$E'$. We define $G 
\quotient E'$ to be the graph obtained as follows. Its vertices are the 
equivalence classes of the vertices of~$G$ modulo the relation dictating that 
two vertices~$u$ and~$v$ are equivalent if there exists a path in~$G'$ 
connecting them. Its edges are determined by edges in~$E \setminus E'$, more 
precisely an edge~$e$ in~$E \setminus E'$ such that $\tau_{G}(e) = 
\{u,v\}$ defines an edge in the quotient connecting the equivalence classes 
of~$u$ and~$v$ if and only $e$ is not an edge of~$G'$.
 \item
  Let $\hat{V}$ be the set of vertices of~$G$ that are endpoints of some
edge in~$E \setminus E'$. Define $G \subtract E' = (\hat{V}, E \setminus E')$.
\end{itemize}
\end{definition}

\begin{figure}
 \begin{center}
 \begin{subfigure}[t]{0.33\textwidth}
 \centering
 \begin{tikzpicture}[scale=1.5]
  \vertex (a) at (0,0) {};
  \vertex (b) at (1,0) {};
  \vertex (c) at (1,1) {};
  \vertex (d) at (0,1) {};
  \vertex (e) at (2,0) {};
  \vertex (f) at (2,1) {};
  
  \path[]
   (d) edge[edge1, dashed] (a)
   (a) edge[edge] (b)
   (b) edge[edge] (c)
   (e) edge[edge] (f)
  ;
 \draw[edge1, dashed] (d) .. controls (0.5, 0.8) .. (c);
 \draw[edge] (d) .. controls (0.5, 1.2) .. (c);
 \end{tikzpicture}
 \caption{A graph $G = (V,E)$ and a subset~$E'$ of edges, in dashed red.}
 \end{subfigure}
 \begin{subfigure}[t]{0.3\textwidth}
    \centering
    \begin{tikzpicture}[scale=1.5]
     \vertex (a) at (0,0) {};
     \vertex (b) at (0,1) {};
     \vertex (e) at (1,0) {};
     \vertex (f) at (1,1) {};
  
     \path[]
      (e) edge[edge] (f)
     ;
     \draw[edge] (a) .. controls (-0.2, 0.5) .. (b);
     \draw[edge] (a) .. controls (0.2, 0.5) .. (b);
     \draw[edge] (b) .. controls (0.4, 1.4) and (-0.4, 1.4) .. (b);
    \end{tikzpicture}
    \caption{The graph $G \quotient E'$.}
  \end{subfigure}
  \begin{subfigure}[t]{0.3\textwidth}
    \centering
    \begin{tikzpicture}[scale=1.5]
     \vertex (a) at (0,0) {};
     \vertex (b) at (1,0) {};
     \vertex (c) at (1,1) {};
     \vertex (d) at (0,1) {};
     \vertex (e) at (2,0) {};
     \vertex (f) at (2,1) {};
  
     \path[]
      (d) edge[edge] (c)
      (a) edge[edge] (b)
      (b) edge[edge] (c)
      (e) edge[edge] (f)
     ;
    \end{tikzpicture}
    \caption{The graph $G \subtract E'$.}
  \end{subfigure}
 \end{center}
\caption{Example of the two constructions in 
\Cref{definition:quotient_graphs}.}
\label{figure:quotient_graphs}
\end{figure}
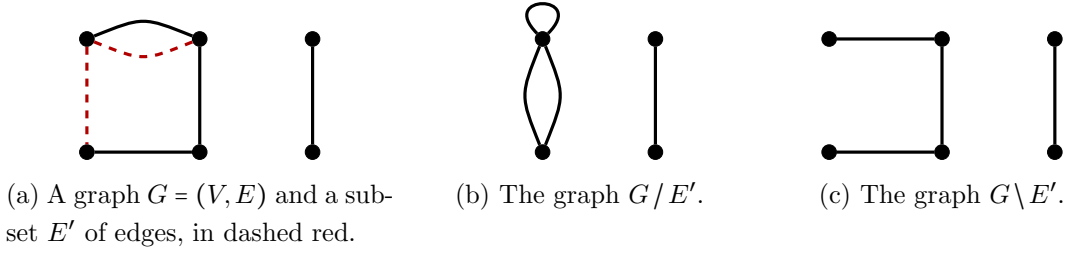

Via \Cref{definition:bigraph_space,definition:bigraph_map} 
we associate to each bigraph~$B$ a rational map~$f_B$, as we did in 
\Cref{laman_graphs} for graphs. 

\begin{definition}
\label{definition:bigraph_space}
Let $B = (G, H)$ be a bigraph, where $G = (V, \edg)$ and $H = (W, \edg)$. 
We set
\begin{align*}
 \p_\C^{\dim(G) - 1} & := \p \bigl( \C^V \bigquotient L_G \bigr), &
 \p_\C^{\dim(H) - 1} & := \p \bigl( \C^W \bigquotient L_H \bigr),
\end{align*}
where
\begin{align*}
 L_G & := \left\langle (x_v)_{v \in V} \, : \,
  \begin{array}{c}
    x_u = x_t \text{ if and only if } u \text{ and } t \\
   \text{are in the same connected component of } G
  \end{array}
 \right\rangle, \\
 L_H & := \left\langle (y_w)_{w \in W} \, : \,
  \begin{array}{c}
    y_u = y_t \text{ if and only if } u \text{ and } t \\
   \text{are in the same connected component of } H
  \end{array}
 \right\rangle,
\end{align*}
and $(x_v)_{v \in V}$ are the standard coordinates of~$\C^{V}$ and similarly 
for $(y_w)_{w \in W}$.
\end{definition}

\begin{definition}
\label{definition:bigraph_map}
 Let $B = (G, H)$ be a bigraph, where $G = (V, \edg)$ and $H = (W, \edg)$. 
Define 
\[
 \begin{array}{rrcl}
  f_{B} \colon & \p_\C^{\dim(G)-1} \times \p_\C^{\dim(H)-1} & \dashrightarrow & 
\p_\C^{|\edg|-1} \\
  & [(x_v)_{v \in V}], [(y_w)_{w \in W}] & \longmapsto & \bigl( (x_u - x_v)(y_t 
- y_w) \bigr)_{e \in \edg}
 \end{array},
\]
where $\{u,v\}=\tau_G(e)$, $u\prec v$, and $\{t,w\}=\tau_H(e)$, $t\prec w$, 
with $\tau_G$ and $\tau_H$ as in \Cref{definition:bigraph}. Here and 
in the rest of the paper, if $e$ is a self-loop say in~$G$, then the 
corresponding polynomial in the definition of~$f_B$ is considered to be $x_u - 
x_u = 0$.
As in \Cref{laman_graphs}, the square brackets~$[ \, \cdot \,]$ denote 
points in~$\p_\C^{\dim(G)-1}$ or~$\p_\C^{\dim(H)-1}$ determined by vectors 
in~$\C^V$ 
or~$\C^W$. As for the map~$f_G$, the map~$f_B$ is well-defined 
because the quantities $(x_u - x_v)$ and $(y_t - y_w)$ depend only, up to 
scalars, on points in~$\p_\C^{\dim(G)-1}$ and~$\p_\C^{\dim(H)-1}$, 
and not on the chosen representatives. We call the map~$f_B$ the \emph{rational 
map associated to~$B$}.
\end{definition}

In \Cref{definition:bigraph_map} we impose $u \prec v$ and $t \prec
w$ in the equations defining the map~$f_B$. The reason for this is that we
want $f_B$, when $B$ is of the form~$(G,G)$, to coincide with~$f_G$ defined
at the end of \Cref{laman_graphs}. If we do not specify the order in
which the vertices appear in the expressions $(x_u - x_v)$ and $(y_t - y_w)$,
we could end up with a map~$f_B$ for which one component is of the form $(x_u
- x_v)(y_v - y_u)$, and not $(x_u - x_v)(y_u - y_v)$ as we would expect.
As in \Cref{laman_graphs}, we are mainly interested in the degree 
of the rational map associated to a bigraph.

\begin{definition}
\label{definition:laman_number}
 Let $B$ be a bigraph. If $f_B$ is dominant, we define the \emph{Laman number} 
of~$B$,
$\Lam(B)$, as~${\deg}{\left( f_B \right)}$, which can hence be either a 
positive number, or~$\infty$. 
Otherwise we set $\Lam(B)$ to zero.
\end{definition}

\begin{remark}
\label{remark:finite_laman_number}
 Notice that if $B$ is pseudo-Laman and $\Lam(B) > 0$, then $\Lam(B) \in 
\N\setminus\{0\}$. 
\end{remark}

If a bigraph has a self-loop or it is particularly simple, then its Laman 
number is zero or one, as shown by the following proposition.

\begin{proposition}
\label{proposition:base_cases}
 Let $B = (G,H)$ be a bigraph.
\begin{itemize}
 \item
  If $G$ or $H$ has a self-loop, then $\Lam(B) = 0$.
 \item
  If both $G$ and $H$ consist of a single edge that joins two vertices, then 
$\Lam(B) = 1$.
\end{itemize}
\end{proposition}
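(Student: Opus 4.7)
The plan is to verify both claims by directly unpacking Definitions~\ref{definition:bigraph_space}, \ref{definition:bigraph_map}, and~\ref{definition:laman_number} in each of the two situations; neither case requires any genuinely new idea beyond bookkeeping.

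For the first bullet, suppose $e \in \edg$ is a self-loop in $G$, so that $\tau_G(e) = \{u\}$ for some single vertex $u$. According to the convention stated in Definition~\ref{definition:bigraph_map}, the $e$-th component of $f_B$ is the zero polynomial $(x_u - x_u)(y_t - y_w) = 0$. Consequently, the image of $f_B$ is contained in the hyperplane $\{z_e = 0\} \subset \p_\C^{|\edg|-1}$, which is a proper algebraic subset (note that $|\edg|\geq 1$ because $e$ itself is a biedge). Hence $f_B$ fails to be dominant, and Definition~\ref{definition:laman_number} forces $\Lam(B) = 0$. The case where $H$ has a self-loop is symmetric.

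For the second bullet, assume $G = (\{u,v\},\{e\})$ and $H = (\{t,w\},\{e\})$, each consisting of one edge joining two distinct vertices. Both graphs are connected with two vertices, so $\dim(G) = \dim(H) = 2 - 1 = 1$, and $|\edg| = 1$. Hence the domain and codomain of $f_B$ are both $\p_\C^{0} \times \p_\C^{0}$ and $\p_\C^{0}$ respectively, each a single reduced point. I would then observe that in the quotients $\C^V / L_G$ and $\C^W / L_H$ the linear forms $x_u - x_v$ and $y_t - y_w$ are nonzero, so their product gives a well-defined nonzero homogeneous form of bidegree $(1,1)$, which shows $f_B$ is defined at the unique point of the domain and carries it to the unique point of the codomain. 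Thus $f_B$ is trivially dominant with exactly one preimage over the general (i.e.\ only) point of the codomain, giving $\Lam(B) = \deg(f_B) = 1$.

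There is no real obstacle here, but a small point to take care of is confirming that the hyperplane $\{z_e = 0\}$ is indeed a proper subset of $\p_\C^{|\edg|-1}$; this is immediate once one notes that the excerpt does not require $|\edg| \geq 2$, but the condition $|\edg|\geq 1$ (which is automatic from the existence of the self-loop) already suffices, since any linear coordinate cutting out a hyperplane in $\p_\C^{0}$ is not identically zero on that space only if $|\edg|\geq 1$, which is the case.
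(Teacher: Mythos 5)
Your proof is correct and follows essentially the same route as the paper: the self-loop forces a zero component of $f_B$, precluding dominance, and in the single-edge case $f_B$ reduces to a map $\p^0_\C\times\p^0_\C\to\p^0_\C$ of degree one. The extra bookkeeping in your final paragraph is harmless but unnecessary.
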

\begin{proof}
 If $G$ or $H$ has a self-loop, a direct inspection of the map~$f_B$ shows that 
the defining polynomial corresponding to the self-loop is 
zero, hence $f_B$ cannot be dominant.
If both $G$ and $H$ consist of a single edge that joins two vertices, then the 
map~$f_B$ reduces to the map $\p^0_{\C} \times \p^0_{\C} \longrightarrow 
\p^0_{\C}$, which has degree~$1$.
\end{proof}

By simply unraveling the definitions, we see that the number of realizations of 
a Laman graph, up to equivalence, can be expressed as a Laman number.

\begin{proposition}
\label{proposition:meaning_laman_number}
 Let $G$ be a Laman graph, then the Laman number of the bigraph~$(G,G)$ 
--- where biedges are the edges of~$G$ --- is equal to
the number of different realizations compatible with a general labeling of~$G$, 
up to direct complex isometries.
\end{proposition}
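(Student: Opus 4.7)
The plan is to verify that the map $f_{(G,G)}$ associated to the bigraph $(G,G)$ coincides, as a rational map of projective varieties, with the map $f_G$ defined at the end of \Cref{laman_graphs} (see~\Cref{equation:f_G}). Once this identification is made, the statement follows immediately: by \Cref{definition:laman_number} the Laman number $\Lam((G,G))$ equals $\deg(f_{(G,G)})$, and by the corollary concluding \Cref{laman_graphs} this is exactly the number of realizations of $G$ compatible with a general labeling, counted up to direct complex isometries.

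To establish that $f_G=f_{(G,G)}$ we compare domains, codomains, and defining polynomials. Since $G$ is a Laman graph it is connected, so $\dim(G)=|V|-1$, whence $\p_\C^{\dim(G)-1}=\p_\C^{|V|-2}$; furthermore the subspace $L_G$ of \Cref{definition:bigraph_space} reduces, for a connected graph, to the line generated by $(1,\dots,1)$, which is precisely the linear subspace used to define $\p_\C^{|V|-2}$ at the end of \Cref{laman_graphs}. For the codomain, the biedge set of $(G,G)$ is $\edg=E$ by construction, so $\p_\C^{|\edg|-1}=\p_\C^{|E|-1}$.

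For the defining polynomials, fix a total order $\prec$ on $V$ and use the same order on both copies of $V$ when forming the bigraph $(G,G)$. Then for every biedge $e$ with $\tau_G(e)=\tau_H(e)=\{u,v\}$ and $u\prec v$, the $e$-th component of $f_{(G,G)}$ is $(x_u-x_v)(y_u-y_v)$, exactly matching the $e$-th component of $f_G$ (note that the product is symmetric in the ordering of $u$ and $v$, so the choice of $\prec$ is immaterial here). Thus $f_G$ and $f_{(G,G)}$ are literally the same rational map, and the proposition follows.

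I do not foresee a real obstacle: the whole argument consists in checking that the definitions of bigraph, dimension, and associated rational map have been set up so as to reduce to the constructions of \Cref{laman_graphs} in the special case $B=(G,G)$ with $G$ connected. The only minor care needed is to observe that for a Laman graph the connectedness hypothesis is built in, so the quotient appearing in \Cref{definition:bigraph_space} indeed collapses to the one-dimensional quotient used to define $f_G$.
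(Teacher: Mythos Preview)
Your proof is correct and follows exactly the approach the paper intends: the paper states that the result follows ``by simply unraveling the definitions,'' and you have carried out precisely that unraveling, verifying that $f_{(G,G)}=f_G$ as rational maps (same domain via connectedness of~$G$, same codomain since $\edg=E$, and same component polynomials thanks to the imposed ordering~$\prec$) and then invoking \Cref{definition:laman_number} together with the corollary to \Cref{lemma:fiber}. The paper itself remarks, just after \Cref{definition:bigraph_map}, that the ordering convention was chosen precisely so that $f_B$ specializes to~$f_G$ when $B=(G,G)$, which is the core of your argument.
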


Due to \Cref{proposition:meaning_laman_number}, the problem we want 
to address in this work is a special instance of the problem of computing 
the Laman number of a bigraph. Notice, however, that the Laman number of an 
arbitrary bigraph does not have an immediate geometric interpretation.

\begin{remark}
\label{remark:choice_fiber}
  Let $B$ be a bigraph with biedges~$\edg$ such that $\Lam(B)>0$ and fix a 
biedge $\bar{e} \in \edg$. Since $f_{B}$ is a rational dominant map between 
varieties over~$\C$, there is a Zariski open subset $\mcal{U} \subseteq 
\p_{\C}^{|\edg|-1}$ such that the preimage of any point $p \in \mcal{U}$ 
under~$f_B$ consists of~$\Lam(B)$ distinct points. In particular, we can 
suppose 
that $p$ is of the form~$(\lambda_e)_{e \in \edg}$ with $\lambda_{\bar{e}} = 1$ 
and $(\lambda_e)_{e \in \edg \setminus \{ \bar{e} \}}$ a general point 
of~$\C^{\edg \setminus \{ \bar{e} \}}$.
\end{remark}

In the following we find it
useful to work in an affine setting: this is why in 
\Cref{definition:affine_sols} we introduce the sets~$Z^{B}_{\C}$. We
are going to use the language of affine schemes, mainly to be able to 
manipulate the equations freely without being concerned about the reducedness of
the ideal they generate. The reader not acquainted with scheme theory can 
harmlessly think about classical affine varieties, and indeed we are going to 
prove that the ideals we are concerned with are reduced. We 
first need to set some notation.

\begin{definition}
\label{definition:pairs}
Let $B = (G,H)$ be a bigraph, where $G = (V, \edg)$ and $H = (W, \edg)$. 
Define
\begin{align*}
 P & :=  \bigl\{ (u,v) \in V^2 \, : \, \{u,v\} \in \tau_G(\edg), \; u \neq v 
\bigr\}, \\
 Q & :=  \bigl\{ (t,w) \in W^2 \, : \, \{t,w\} \in \tau_H(\edg), \; t \neq w 
\bigr\}.
\end{align*}
Notice that the elements of~$P$ and $Q$ are ordered pairs 
(and this is conveyed also by the different notation used). In particular, from 
the definition we see that if $(u,v) \in P$, then also $(v,u) \in P$, and 
similarly for~$Q$. Moreover, we require the two elements in each pair to be 
different, and this is crucial in view of \Cref{definition:distances}.
\end{definition}

\begin{definition}
\label{definition:affine_sols}
Let $B = (G,H)$ be a bigraph with biedges~$\edg$ without self-loops.
Fix a biedge $\bar{e} \in \edg$. For a general point $(\lambda_e)_{e \in \edg 
\setminus \{ \bar{e} \}}$ in~$\C^{\edg \setminus \{ \bar{e} \}}$, we define 
$Z^B_{\C}$
as the subscheme of~$\C^P \times \C^Q$ defined by
\[
  \begin{cases}
		x_{\bar{u} \bar{v}} = y_{\bar{t} \bar{w}} = 1, 
		& \bar{u}\prec \bar{v},\ \bar{t}\prec \bar{w},
		\\[2pt]
		x_{uv} \tth y_{tw} = \lambda_e, 
		& \text{for all } e \in \edg \setminus \{ \bar{e} \}, 
			\, u\prec v, \, t\prec w,
		\\[2pt]
		\sum_{\mscr{C}} \, x_{uv} = 0,
		& \text{for all cycles } \mscr{C} \text{ in } G,
		\\[2pt]
		\sum_{\mscr{D}} \, y_{tw} = 0,
		& \text{for all cycles } \mscr{D} \text{ in } H,
  \end{cases}
\]
where we take $( x_{uv} )_{(u,v)\in P}$ and $( y_{tw} )_{(t,w) \in Q}$ as 
coordinates
and where
\begin{align*}
  \{\bar{u}, \bar{v}\}&=\tau_G(\bar{e}),&\{u,v\}&=\tau_G(e),&
  \{\bar{t}, \bar{w}\}&=\tau_H(\bar{e}),&\{t,w\}&=\tau_H(e).
\end{align*}
Here 
and in the following, when we write $\sum_{\mscr{C}} \, x_{uv}$ for a cycle 
$\mscr{C} = (u_0, u_1, \dotsc, u_n=u_0)$ in~$G$ we mean the expression
$x_{u_0 u_1} + \dotsb + x_{u_{n-1} u_0}$ (and similarly for cycles in~$H$). 
Notice that in cycles we allow repetitions of edges. In particular, if $(u,v) 
\in P$, one can 
always consider the cycle $(u,v,u)$, which implies the relation $x_{uv} = - 
x_{vu}$.
We drop the dependence of~$Z_{\C}^{B}$ on~$\bar{e}$ and~$(\lambda_{e})_{e \in 
\edg \setminus \{ \bar{e} \}}$ in the notation, since in the following it 
is clear from the context.
\end{definition}

\begin{example}
\label{example:Z_C}
 Consider the bigraph $(G,G)$ with set of biedges~$\edg$ as in 
\Cref{figure:4_laman}, that consists of two copies of the only Laman 
graph 
with $4$ vertices. Fix the biedge~$\bar{e}$ to be the one associated to 
the two edges connecting~$2$ and~$3$. If $(\lambda_e)_{e 
\in \edg 
\setminus \{ \bar{e} \}}$ is a general point, then the scheme~$Z^B_\C$ is 
defined by the following 
equations:
\begin{gather*}
x_{23} = y_{23}=1, \\
\begin{aligned}
x_{12}\tth y_{12} &= \lambda_\colcA,& 
x_{12}+x_{21}=x_{13}+x_{31}=x_{23}+x_{32}=x_{24}+x_{42}=x_{34}+x_{43}=0, \\
x_{13}\tth y_{13} &= \lambda_\colcB,& 
y_{12}+y_{21}=y_{13}+y_{31}=y_{23}+y_{32}=y_{24}+y_{42}=y_{34}+y_{43}=0, \\
x_{24}\tth y_{24} &= \lambda_\colcD,& 
x_{12}+x_{23}+x_{31}=y_{12}+y_{23}+y_{31}=0,                 \\
x_{34}\tth y_{34} &= \lambda_\colcE,& 
x_{24}+x_{43}+x_{32}=y_{24}+y_{43}+y_{32}=0. \\
\end{aligned}
\end{gather*}
Note that we did not include redundant equations coming from cycles such as 
$(1, 2, 4, 3, 1)$.
\end{example}

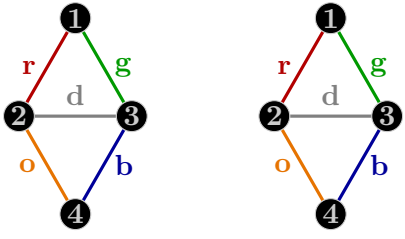
\begin{figure}
\begin{center}
\begin{tabular}{ccc}
\begin{tikzpicture}[scale=0.75]
\begin{scope}
	\lvertex (2) at (0,0) {2};
	\lvertex (3) at (2,0) {3};
	\lvertex (1) at (1,1.73205) {1};
	\lvertex (4) at (1,-1.73205) {4};
	
	\draw[edge1] (1) to node[lab1,left] {\colcA} (2);
	\draw[edge2] (1) to node[lab2,right] {\colcB} (3);
	\draw[edge3] (2) to node[lab3,above] {\colcC} (3);
	\draw[edge4] (2) to node[lab4,left] {\colcD} (4);
	\draw[edge5] (3) to node[lab5,right] {\colcE} (4);
\end{scope}
\end{tikzpicture}
& \qquad \qquad &
\begin{tikzpicture}[scale=0.75]
\begin{scope}
	\lvertex (2) at (0,0) {2};
	\lvertex (3) at (2,0) {3};
	\lvertex (1) at (1,1.73205) {1};
	\lvertex (4) at (1,-1.73205) {4};
	
	\draw[edge1] (1) to node[lab1,left] {\colcA} (2);
	\draw[edge2] (1) to node[lab2,right] {\colcB} (3);
	\draw[edge3] (2) to node[lab3,above] {\colcC} (3);
	\draw[edge4] (2) to node[lab4,left] {\colcD} (4);
	\draw[edge5] (3) to node[lab5,right] {\colcE} (4);
\end{scope}
\end{tikzpicture}
\end{tabular}
\end{center}
\caption{A bigraph that consists of two copies of the only Laman graph 
with $4$ vertices. Edges on the left and on the right bearing the same label 
are 
associated to the same biedge.}
\label{figure:4_laman}
\end{figure}

In the following lemma we show that the sets~$Z^{B}_{\C}$ can be used to 
compute the degree of~$f_B$.

\begin{lemma}
\label{lemma:passing_to_affine}
 Let $B = (G,H)$ be a bigraph with biedges~$\edg$ without self-loops.
Fix a biedge $\bar{e} \in \edg$. Let $p \in \p_\C^{|\edg|-1}$ be given by 
$p_{\bar{e}} = 1$ and $p_{e} = \lambda_e$ for all $e \in \edg \setminus 
\{\bar{e}\}$. Then the schemes~$f_B^{-1}(p)$ and~$Z_{\C}^{B}$ are isomorphic. 
In 
particular, $Z_{\C}^{B}$ consists of~$\Lam(B)$ distinct points.
\end{lemma}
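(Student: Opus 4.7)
My plan is to construct an explicit isomorphism of schemes between $f_B^{-1}(p)$ and $Z_\C^{B}$; the final assertion about $\Lam(B)$ distinct points is then immediate from \Cref{remark:choice_fiber}. I would work on the affine chart of $\p_\C^{|\edg|-1}$ where the $\bar e$-coordinate is non-zero, so that a point of $f_B^{-1}(p)$ is a pair of classes $\bigl([(x_v)_{v\in V}],[(y_w)_{w\in W}]\bigr)$ with $x_{\bar u}\neq x_{\bar v}$, $y_{\bar t}\neq y_{\bar w}$, and $(x_u-x_v)(y_t-y_w)=\lambda_e\,(x_{\bar u}-x_{\bar v})(y_{\bar t}-y_{\bar w})$ for every $e\in\edg\setminus\{\bar e\}$.

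The forward map $\Phi\colon f_B^{-1}(p)\to Z_\C^{B}$ is: pick the unique representatives of the two projective classes for which $x_{\bar u}-x_{\bar v}=y_{\bar t}-y_{\bar w}=1$ (this is allowed since differences are invariant under translation by $L_G$ and $L_H$, and the remaining scalar in each projective factor can be fixed by this condition), and then set $x_{uv}:=x_u-x_v$ for $(u,v)\in P$ and $y_{tw}:=y_t-y_w$ for $(t,w)\in Q$. The normalization $x_{\bar u\bar v}=y_{\bar t\bar w}=1$ is immediate, the equations $x_{uv}y_{tw}=\lambda_e$ are the fiber condition, and both families of cycle equations hold by telescoping. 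Conversely, the map $\Psi\colon Z_\C^{B}\to f_B^{-1}(p)$ will be constructed by path integration: fix a spanning forest of $G$ and a base vertex in each connected component, and for every vertex $v$ define $x_v$ as the sum of the values $x_{uu'}$ along the unique forest path from the base to $v$. The cycle equations force this to be independent of the path chosen in $G$ (and not only in the forest), so one recovers a vector in $\C^V$ whose class in $\C^V/L_G$ does not depend on the auxiliary choices. The analogous construction on the $H$-side produces a class in $\C^W/L_H$, and a routine check shows that $\Phi\circ\Psi$ and $\Psi\circ\Phi$ are the identities, since both reduce to the tautology that a telescoping sum of increments equals the total increment.

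The main obstacle is to promote this set-theoretic bijection to an isomorphism of schemes. Since $\Phi$ and $\Psi$ are defined by polynomial formulas on the chosen affine charts, it suffices to check that the defining ideals match: the cycle equations of $Z_\C^{B}$ precisely encode the quotient by $L_G$ and $L_H$ (any relation in $\C^V/L_G$ among differences $x_u-x_v$ along edges of $G$ is a consequence of the cycle relations, and likewise for $H$), while the normalization $x_{\bar u\bar v}=y_{\bar t\bar w}=1$ together with $x_{uv}y_{tw}=\lambda_e$ encode the pullback of $p$ along $f_B$ after dehomogenizing at $\bar e$. Once this ideal-level identification is in place, $\Phi$ and $\Psi$ are mutually inverse morphisms of schemes, and \Cref{remark:choice_fiber} shows that the common scheme consists of $\Lam(B)$ distinct reduced points.
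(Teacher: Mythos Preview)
Your proposal is correct and follows essentially the same approach as the paper: the forward map via normalized differences $(x_u-x_v)/(x_{\bar u}-x_{\bar v})$ and the inverse via path sums along a spanning forest are exactly the two morphisms the paper writes down. You are somewhat more explicit about the scheme-theoretic verification (matching ideals rather than just points), which the paper condenses into ``a direct computation shows that both maps are well-defined, and are each other's inverse.''
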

\begin{proof}
 Write $\tau_G(\bar{e}) = \{\bar{u}, \bar{v}\}$ with $\bar{u} \prec \bar{v}$ 
and $\tau_H(\bar{e}) = \{\bar{t}, \bar{w}\}$ with $\bar{t} \prec \bar{w}$. We 
define a morphism from~$f_B^{-1}(p)$ to~$Z_{\C}^{B}$ by sending a point 
\[
 \bigl( [(x_v)_{v \in V}], [(y_w)_{w \in W}] \bigr) \in f_B^{-1}(p)
\]
to the point whose $uv$\hbox{-}coordinate is~$(x_u - x_v) / (x_{\bar{u}} - 
x_{\bar{v}})$, where $u \prec v$, for all $(u, v) \in P$, and whose 
$tw$\hbox{-}coordinate is~$(y_t - y_w) / (y_{\bar{t}} - y_{\bar{w}})$, where 
$t \prec w$, for all $(t,w) \in Q$.

We define a morphism from~$Z_{\C}^{B}$ to~$f_B^{-1}(p)$ as follows. For every 
component~$C$ of~$G$, fix a rooted spanning tree~$T_{C}$ and denote its 
root by~$r(C)$; similarly for~$H$. We send a point $\bigl( ( x_{uv} 
)_{(u,v) \in P} , 
( y_{tw} )_{(t,w) \in Q} \bigr) \in Z_{\C}^{B}$ to the point $\bigl([(x_u)_{u 
\in V}], [(y_t)_{t \in W}]\bigr) \in f^{-1}_{B}(p)$ such that if a vertex $u 
\in 
V$ belongs to the connected component~$C$, then $x_u = \sum_{i=0}^{n-1} x_{u_i 
u_{i+1}}$, where $(r(C)=u_0, \dotsc, u_n=u)$ is the unique path in~$T_C$ 
from~$r(C)$ to~$u$, and similarly for the vertices $t \in W$.
A direct computation shows that both maps are well-defined, and are each 
other's inverse. From this the statement follows.
\end{proof}

We conclude the section by proving a few results about the Laman number of a 
special kind of bigraph, that are used in \Cref{formulas} to 
obtain the final algorithm.

\begin{definition}
\label{definition:bridge}
Let $G$ be a graph and let $e$ be an edge of~$G$. We say that $e$ is a {\em 
bridge} if removing~$e$ increases the number of connected components of~$G$.
\end{definition}

\begin{lemma}
\label{lemma:two_bridges}
 Let $B = (G, H)$ be a pseudo-Laman bigraph with biedges~$\edg$ without 
self-loops and fix~$\bar{e} \in \edg$. If $\bar{e}$ is a bridge in both 
$G$ and~$H$, then $\Lam(B) = 0$.
\end{lemma}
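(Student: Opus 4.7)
My plan is to show that $f_B$ is not dominant by exhibiting a two-dimensional family of points in the domain on which only the $\bar{e}$-component of $f_B$ varies. Combined with the pseudo-Laman condition --- which forces the domain and target of $f_B$ to have the same dimension $|\edg|-1$ --- the existence of such a family implies that $f_B$ has positive-dimensional generic fibers, hence is not dominant, hence $\Lam(B) = 0$ by \Cref{definition:laman_number}.

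First, let me set up the geometric data. Write $\tau_G(\bar{e}) = \{\bar{u}, \bar{v}\}$ and $\tau_H(\bar{e}) = \{\bar{t}, \bar{w}\}$, and let $C$ be the connected component of $G$ containing $\bar{e}$. Since $\bar{e}$ is a bridge in $G$ in the sense of \Cref{definition:bridge}, removing $\bar{e}$ disconnects $C$ into two pieces; call their vertex sets $V_1 \ni \bar{u}$ and $V_2 \ni \bar{v}$. Because isolated vertices are discarded in $G\subtract \{\bar{e}\}$, the bridge condition forces both $\bar{u}$ and $\bar{v}$ to have further incident edges in $C$, so $|V_1|, |V_2| \geq 2$ and in particular $\dim(G) \geq 2$. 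The analogous construction gives a partition $\{W_1 \ni \bar{t}, W_2 \ni \bar{w}\}$ on the $H$-side, with $\dim(H) \geq 2$.

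The key construction is a pair of ``shift'' one-parameter families. Given a representative $(x_v)_{v \in V}$ of a point in $\p_\C^{\dim(G)-1}$, I add a scalar $c$ to $x_v$ for $v \in V_1$ and leave the other coordinates unchanged; analogously, I shift $(y_w)_{w \in W}$ by a scalar $d$ on $W_1$. The indicator vector $\mathbf{1}_{V_1}$ is not contained in $L_G$, because $V_1$ is a proper nonempty subset of the single component $C$, so for a generic point the shift in $c$ traces out a one-dimensional curve in $\p_\C^{\dim(G)-1}$ --- this uses $\dim(G) \geq 2$, so that generic points are not proportional to $\mathbf{1}_{V_1}$ --- and similarly for $d$. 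Their product gives, through any generic point $p$ of the domain, a two-dimensional subvariety $S_p$. Since $\bar{e}$ is the only edge of $G$ joining $V_1$ and $V_2$, for each biedge $e \neq \bar{e}$ the quantity $(x_u - x_v)$ is constant along the shift; the same holds for $(y_t - y_w)$ on the $H$-side. Hence only the $\bar{e}$-component, which becomes $(x_{\bar{u}} - x_{\bar{v}} + c)(y_{\bar{t}} - y_{\bar{w}} + d)$, varies along $S_p$.

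To conclude, I would compose $f_B$ with the linear projection $\pi \colon \p_\C^{|\edg|-1} \dashrightarrow \p_\C^{|\edg|-2}$ that forgets the $\bar{e}$-coordinate. By the previous paragraph, $\pi \circ f_B$ is constant on $S_p$, so its generic fibers have dimension at least $2$; since the domain has dimension $|\edg|-1$, the image of $\pi \circ f_B$ has dimension at most $|\edg|-3 < |\edg|-2 = \dim \p_\C^{|\edg|-2}$, and $\pi \circ f_B$ is not dominant. Because $\pi$ itself is dominant, $f_B$ cannot be dominant either, giving $\Lam(B) = 0$. The main point requiring careful verification is the degeneracy check at the start --- that the bridge hypothesis really forces $\dim(G), \dim(H) \geq 2$, so that the shift families are honest one-dimensional projective curves and do not collapse to points. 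This also explains why the base case of \Cref{proposition:base_cases}, where $\Lam = 1$, escapes the conclusion: in a single-edge bigraph the unique edge is not a bridge under \Cref{definition:bridge}, since removing it yields the empty graph and hence does not increase the number of connected components.
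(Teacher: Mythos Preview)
Your proof is correct and takes a somewhat different route from the paper's. The paper argues by contradiction on the affine model $Z^{B}_{\C}$ of \Cref{definition:affine_sols}: since $\bar{e}$ is a bridge on both sides, the normalizing variables $x_{\bar{u}\bar{v}}, y_{\bar{t}\bar{w}}$ decouple from the remaining equations, leaving a reduced system~$\widetilde{Z}$ that carries a $\C^{*}$-action $(x_{uv}, y_{tw}) \mapsto (\eta\, x_{uv}, \eta^{-1} y_{tw})$; this forces $Z^{B}_{\C}\cong\widetilde{Z}$ to be infinite, contradicting pseudo-Lamanity. You instead work directly with the projective map~$f_B$ and build two independent one-parameter shifts in the domain that leave every component of~$f_B$ except the $\bar{e}$-component invariant, then conclude non-dominance from a dimension count on $\pi\circ f_B$. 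The two mechanisms are cousins --- after renormalizing $x_{\bar{u}\bar{v}}=1$, the paper's scaling becomes exactly a shift by a multiple of~$\mathbf{1}_{V_1}$ in your picture --- but your argument avoids the passage to~$Z^{B}_{\C}$ and establishes $\Lam(B)=0$ directly rather than by contradiction. Your reading of \Cref{definition:bridge} through the $\subtract$-operation of \Cref{definition:quotient_graphs} is also the right way to reconcile the lemma with the single-edge base case of \Cref{proposition:base_cases}; under the naive edge-deletion reading both proofs tacitly need $|\edg|\geq 2$, and your discussion makes that caveat explicit.
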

\begin{proof}
 Suppose for a contradiction $\Lam(B) > 0$. Consider the equations 
defining~$Z^{B}_{\C}$. Since $\bar{e}$ is a bridge in 
both $G$ and~$H$, the variables $x_{\bar{u} \bar{v}}$ and $y_{\bar{t} 
\bar{w}}$, where $\{\bar{u}, \bar{v}\} = \tau_G(\bar{e})$ and $\{\bar{t}, 
\bar{w}\} = \tau_H(\bar{e})$, do not appear in any of the equations defined by 
cycles in~$G$ or in~$H$ except for the equations 
$x_{\bar{u} \bar{v}} = -x_{\bar{v} \bar{u}}$ 
and 
$y_{\bar{t} \bar{w}} = -y_{\bar{w} \bar{t}}$.
Hence, the system of equations
\[
  \begin{cases}
   x_{uv} \tth y_{tw} = \lambda_e, &
    \text{for all } e \in \edg \setminus \{ \bar{e} \}, \, u\prec v,
    \, t\prec w, \\[2pt]
   \sum_{\mscr{C}} \, x_{uv} = 0,  &
		\text{for all cycles } \mscr{C}
		\text{ in } G \bigsubtract \bigl\{ \bar{e} \bigr\}, \\[2pt]
   \sum_{\mscr{D}} \, y_{tw} = 0, &
		\text{for all cycles } \mscr{D}
		\text{ in } H \bigsubtract \bigl\{ \bar{e} \bigr\}
  \end{cases}
\]
defines an affine 
scheme $\widetilde{Z}$ isomorphic to~$Z^{B}_{\C}$. One notices, however, 
that if $(x_{uv}, y_{tw})$ is a point in $\widetilde{Z}$, then for every $\eta
\in \C\setminus\{0\}$ also the point $(\eta \tth x_{uv}, 
\frac{1}{\eta} y_{tw})$ is in $\widetilde{Z}$. This implies that $Z^{B}_{\C}$ 
has infinite cardinality, which contradicts the pseudo-Laman assumption on~$B$.
\end{proof}

\begin{lemma}
\label{lemma:remove_edge}
 Let $B = (G, H)$ be a pseudo-Laman bigraph with biedges~$\edg$ without 
self-loops and fix~$\bar{e} \in \edg$. If $\bar{e}$ is a bridge in~$G$, 
but not in~$H$, then 
\[
  \Lam(B) \, = \,
  \Lam\bigl( (G \subtract \{ \bar{e} \}, 
              H \subtract \{ \bar{e} \})
       \bigr).
\]
\end{lemma}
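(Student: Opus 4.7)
The plan is to relate $f_B$ and $f_{B'}$ through a natural commutative square, and then count preimages of a general point fiber by fiber. Write $B' = (G', H')$ with $G' := G \subtract \{\bar e\}$ and $H' := H \subtract \{\bar e\}$. First I would verify that $B'$ is again pseudo-Laman. Since $\bar e$ is a bridge in~$G$, a short case analysis depending on whether the endpoints $\bar u, \bar v$ of $\bar e$ in $G$ are leaves (both non-leaves, exactly one a leaf, both leaves) shows uniformly that $\dim(G') = \dim(G) - 1$. Since $\bar e$ is not a bridge in~$H$, both of its endpoints in $H$ have degree at least two, so neither is isolated upon removing $\bar e$; hence $V(H') = W$, the connected components of $H$ and $H'$ coincide, $\dim(H') = \dim(H)$, and in particular $L_{H'} = L_H$. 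Summing gives $\dim(G') + \dim(H') = |\edg| = |\edg \setminus \{\bar e\}| + 1$, so $B'$ is pseudo-Laman.

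Next I would establish the commutativity $\phi \circ f_B = f_{B'} \circ (\pi_G \times \mathrm{id})$ of rational maps, where $\pi_G \colon \p(\C^V/L_G) \dashrightarrow \p(\C^{V(G')}/L_{G'})$ is induced by the natural linear surjection $\C^V/L_G \twoheadrightarrow \C^{V(G')}/L_{G'}$ (whose kernel is one-dimensional in each of the three subcases above), and $\phi \colon \p^{|\edg|-1} \dashrightarrow \p^{|\edg|-2}$ is the linear projection forgetting the $\bar e$-coordinate. Commutativity holds because for every $e \neq \bar e$ the endpoints of $e$ in $G$ lie in the same connected component of $G'$, so the quantity $x_u - x_v$ descends to $\C^{V(G')}/L_{G'}$; hence the $e$-th component of $f_B$ factors through $f_{B'}$.

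The main computation is as follows. Assuming $\Lam(B') > 0$, fix a general $p \in \p^{|\edg|-1}$ and set $p' := \phi(p)$. By commutativity every preimage $(x,y) \in f_B^{-1}(p)$ projects to some $(\alpha, \beta) \in f_{B'}^{-1}(p')$, of which there are $\Lam(B')$ points. For each $(\alpha, \beta)$, the preimage $C_{\alpha, \beta} := \pi_G^{-1}(\alpha) \times \{\beta\}$ is a copy of $\p^1$. Parametrizing it affinely by $\bar x + t \bar v_0$, where $\bar v_0$ generates $\ker \pi_G$ and admits a lift $v_0 \in \C^V$ with $v_0(\bar u) - v_0(\bar v) = 1$, one checks that for every $e \neq \bar e$ both endpoints of $e$ in $G$ lie in the same component of $G'$, so $v_0(u) - v_0(v) = 0$ and the $e$-coordinate of $f_B$ along $C_{\alpha, \beta}$ is constant in $t$; the $\bar e$-coordinate equals $\bigl((\bar x_{\bar u} - \bar x_{\bar v}) + t\bigr) \cdot (y_{\bar t} - y_{\bar w})|_{\beta}$, a non-constant linear function of~$t$. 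The slope factor $(y_{\bar t} - y_{\bar w})|_{\beta}$ is nonzero for generic $\beta$ precisely because $\bar t$ and $\bar w$ remain in the same component of $H'$, which is where the hypothesis that $\bar e$ is not a bridge in~$H$ enters. Hence $f_B|_{C_{\alpha, \beta}}$ is a degree-one morphism onto $\phi^{-1}(p') \cong \p^1$ and meets $f_B^{-1}(p)$ in a single point; summing over the $\Lam(B')$ fibers yields $\Lam(B) = \Lam(B')$. If $\Lam(B') = 0$, the same commutative square forces $\Lam(B) = 0$.

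The main obstacle is the uniform case analysis at the endpoints of $\bar e$ in~$G$: the three subcases require slightly different identifications of $\ker \pi_G$, yet in each case the kernel is one-dimensional and a generator admits a lift with $v_0(\bar u) - v_0(\bar v) = 1$. Once this is settled, the non-vanishing of $(y_{\bar t} - y_{\bar w})|_{\beta}$ for generic $\beta$ --- which is the only place the non-bridge hypothesis on~$H$ is used --- completes the degree-one argument.
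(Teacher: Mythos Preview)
Your argument is correct. The underlying idea is the same as the paper's --- the bridge hypothesis on~$G$ makes the ``$\bar e$-direction'' free, and the non-bridge hypothesis on~$H$ lets you recover it uniquely --- but you package it differently. The paper works in the affine chart~$Z^B_{\C}$ defined using an auxiliary biedge~$\tilde e \neq \bar e$: since $\bar e$ is a bridge in~$G$, the variable~$x_{\bar u\bar v}$ appears only in $x_{\bar u\bar v}\,y_{\bar t\bar w}=\lambda_{\bar e}$ and its sign relation; projecting these coordinates away lands in~$Z^{B'}_{\C}$, and the projection is an isomorphism because $y_{\bar t\bar w}$ is determined by a cycle equation in~$H$ (non-bridge) and then $x_{\bar u\bar v}$ is recovered from the product equation. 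Your version stays with the projective maps~$f_B$ and~$f_{B'}$, builds the commuting square with the linear projection~$\pi_G$, and counts by showing each fibre $C_{\alpha,\beta}\cong\p^1$ maps with degree~$1$ onto $\phi^{-1}(p')$. The paper's proof is shorter and slots directly into its $Z^B_\C$-formalism; yours is more geometric and makes the role of the one-dimensional kernel of~$\pi_G$ explicit. One phrasing to tighten: ``nonzero for generic~$\beta$'' should read ``nonzero for every~$\beta$ in the fibre over a generic~$p'$'', which follows from the dimension count $\dim\bigl(\p^{\dim(G')-1}\times\{y_{\bar t}=y_{\bar w}\}\bigr)<\dim\p^{|\edg|-2}$.
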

\begin{proof}
Consider another biedge~$\tilde{e}$ and 
use it to define the scheme~$Z^{B}_{\C}$. Its equations are:
\[
  Z^{B}_{\C} \colon
  \begin{cases}
   x_{\tilde{u} \tilde{v}} = y_{\tilde{t} \tilde{w}} = 1, &
    \tilde{u}\prec \tilde{v},\ \tilde{t}\prec \tilde{w}, \\[2pt]
   x_{uv} \tth y_{tw} = \lambda_e, &
    \text{for all } e \in \edg \setminus \{ \tilde{e} \},
    \, u\prec v, \, t\prec w, \\[2pt]
   \sum_{\mscr{C}} \, x_{uv} = 0,  &
		\text{for all cycles } \mscr{C} \text{ in } G,  \\[2pt]
   \sum_{\mscr{D}} \, y_{tw} = 0, &
		\text{for all cycles } \mscr{D} \text{ in } H.
  \end{cases}
\]
Now consider the bigraph $\widetilde{B} = \bigl( G \subtract 
\{ \bar{e} \}, H \subtract \{ \bar{e} \} \bigr)$.
Notice that we can still use~$\tilde{e}$ to define the 
scheme~$Z^{\widetilde{B}}_{\C}$. Its equations are:
\[
  Z^{\widetilde{B}}_{\C} \colon
  \begin{cases}
   x_{\tilde{u} \tilde{v}} = y_{\tilde{t} \tilde{w}} = 1, &
    \tilde{u}\prec \tilde{v},\ \tilde{t}\prec \tilde{w}, \\[2pt]
   x_{uv} \tth y_{tw} = \lambda_e, &
    \text{for all } e \in \edg \setminus \{ \bar{e}, \tilde{e} \},
    \, u\prec v, \, t\prec w, \\[2pt]
   \sum_{\mscr{C}} \, x_{uv} = 0,  &
		\text{for all cycles } \mscr{C}
		\text{ in } G \bigsubtract \bigl\{ \bar{e} \bigr\}, \\[2pt]
   \sum_{\mscr{D}} \, y_{tw} = 0, &
		\text{for all cycles } \mscr{D}
		\text{ in } H \bigsubtract \bigl\{ \bar{e} \bigr\}.
  \end{cases}
\]
We are going to prove that $Z^{B}_{\C}$ and $Z^{\widetilde{B}}_{\C}$ are 
isomorphic, concluding the proof. Since $\bar{e}$ is a bridge in~$G$, 
the coordinate $x_{\bar{u} \bar{v}}$ appears in the equations 
of~$Z^{B}_{\C}$ only in $x_{\bar{u} \bar{v}} \tth y_{\bar{t} \bar{w}} = 
\lambda_{\bar{e}}$, and in 
$x_{\bar{u} \bar{v}} = -x_{\bar{v} \bar{u}}$. 
This means that the image of~$Z^{B}_{\C}$ under the projection from 
the coordinates $x_{\bar{u} \bar{v}}$, $x_{\bar{v} \bar{u}}$, $y_{\bar{t}, 
\bar{w}}$ and $y_{\bar{w}, \bar{t}}$ coincides 
with~$Z^{\widetilde{B}}_{\C}$. Moreover, the projection is an isomorphism 
on~$Z^{B}_{\C}$: in fact,  
the $y_{\bar{t}, \bar{w}}$\hbox{-}coordinate can be recovered by a 
cycle condition (recall that $\bar{e}$ is not a bridge in~$H$, so it 
appears in a cycle different from the trivial cycle $(\bar{t}, \bar{w}, 
\bar{t})$). Then the $x_{\bar{u}, \bar{v}}$\hbox{-}coordinate can be 
recovered from the equation $x_{\bar{u} \bar{v}} \tth y_{\bar{t} \bar{w}} = 
\lambda_{\bar{e}}$.
\end{proof}

\begin{definition}
\label{definition:untangles}
Let $B = (G, H)$ be a bigraph with biedges~$\edg$  without self-loops and let 
$\bar{e}\in\edg$ be fixed. Suppose that the graph $G$ splits into  disconnected 
subgraphs $G_1'$, $G_2'$ and that $H$ splits into disconnected subgraphs 
$H_1'$, 
$H_2'$. Suppose further that $\edg=\edg_1\cup\edg_2\cup\{\bar{e}\}$ decomposes 
into three disjoint subsets such that 
\[
 G_1'=(V_1',\edg_1\cup \{\bar{e}\}),\quad 
 G_2'=(V_2',\edg_2)
 \quad\textrm{and}\quad
 H_1'=(W_1',\edg_1),\quad
 H_2'=(W_2',\edg_2\cup \{\bar{e}\}).
\]
Under these assumptions we say that the bigraph $B$ \emph{untangles} via 
$\bar{e}$ 
into bigraphs
\begin{align*}
B_1& :=
\bigl(
G_1' \subtract \{ \bar{e} \},~
H_1'
\bigr),
&
B_2&:=
\bigl(
G_2',~
H_2' \subtract \{ \bar{e} \}
\bigr).
\end{align*}
See \Cref{figure:quotient_bidistance} for an example of a bigraph that 
untangles via an edge (the gray vertical one).
\end{definition}

\begin{proposition}
\label{proposition:splitting}
Suppose that a bigraph $B = (G,H)$ with biedges~$\edg$ without 
self-loops untangles via $\bar e \in \edg$ into 
bigraphs $B_1$ and $B_2$, where $\bar{e}$ is neither a bridge in~$G$ 
nor in~$H$, then
\[
 \Lam(B) \, = \, \Lam(B_1) \cdot \Lam(B_2).
\]
\end{proposition}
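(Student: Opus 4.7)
The plan is to describe $\Lam(B)$ via the affine scheme $Z^B_{\C}$ of Definition~2.9 and to factor it as a direct product matching the untangling decomposition. By Remark~2.7 I am free to choose $\bar{e}$ itself as the distinguished biedge for $Z^B_{\C}$; the normalizations are then $x_{\bar{u}\bar{v}} = y_{\bar{t}\bar{w}} = 1$ with $\{\bar{u},\bar{v}\} = \tau_G(\bar{e})$ and $\{\bar{t},\bar{w}\} = \tau_H(\bar{e})$. The hypothesis that $\bar{e}$ is a bridge in neither $G$ nor $H$ guarantees that removing $\bar{e}$ does not change the number of connected components of $G_1'$ or $H_2'$, so $\bar{u},\bar{v}$ remain vertices of $G_1'\subtract\{\bar{e}\}$ and $\bar{t},\bar{w}$ remain vertices of $H_2'\subtract\{\bar{e}\}$. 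By Lemma~2.12, $|Z^B_{\C}| = \Lam(B)$.

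I next observe that the defining equations of $Z^B_{\C}$ split along the partition induced by the untangling. The $x$-coordinates partition as $X_1\sqcup X_2$ according to whether the corresponding edge lies in $G_1'$ or $G_2'$, and similarly the $y$-coordinates as $Y_1\sqcup Y_2$; note that $x_{\bar{u}\bar{v}}\in X_1$ and $y_{\bar{t}\bar{w}}\in Y_2$. Since $G_1'$ and $G_2'$ are disjoint and similarly for $H$, every cycle of $G$ lies entirely in one of $G_1', G_2'$, so the cycle equations split along the partition; likewise for $H$. For $e\in\edg_1$ the edge belongs to both $G_1'$ and $H_1'$, so the product equation $x_{uv}y_{tw}=\lambda_e$ involves only $X_1\cup Y_1$, and analogously for $\edg_2$. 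Finally, the normalizations sit within $X_1$ and $Y_2$ respectively. Hence $Z^B_{\C}$ decomposes as a direct product $Z_1\times Z_2$ of subschemes in the $(X_i,Y_i)$-coordinates.

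It remains to show $|Z_i| = \Lam(B_i)$; I treat $Z_1$, the other case being symmetric. Fix an auxiliary biedge $\bar{e}_1\in\edg_1$ with $\tau_G(\bar{e}_1)=\{\bar{u}_1,\bar{v}_1\}$ and $\tau_H(\bar{e}_1)=\{\bar{t}_1,\bar{w}_1\}$, and consider $Z^{B_1}_{\C}$ defined using $\bar{e}_1$ with parameters $\mu_e := \lambda_e/\lambda_{\bar{e}_1}$ for $e\in\edg_1\setminus\{\bar{e}_1\}$; these remain generic, so $|Z^{B_1}_{\C}|=\Lam(B_1)$ by Lemma~2.12. A bijection $\phi\colon Z_1\to Z^{B_1}_{\C}$ is obtained by rescaling $x_{uv}\mapsto x_{uv}/a$, $y_{tw}\mapsto y_{tw}/b$ with $a:=x_{\bar{u}_1\bar{v}_1}$, $b:=y_{\bar{t}_1\bar{w}_1}$, while forgetting the $x_{\bar{u}\bar{v}}$-coordinate: the product equation for $\bar{e}_1$ forces $ab=\lambda_{\bar{e}_1}$, whence the rescaled product equations become $\tilde{x}_{uv}\tilde{y}_{tw}=\mu_e$, and the homogeneous cycle equations are preserved. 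The inverse sends $(\tilde{x},\tilde{y})$ to $(a\tilde{x},b\tilde{y})$ with $a:=1/\tilde{x}_{\bar{u}\bar{v}}$---where $\tilde{x}_{\bar{u}\bar{v}}$ is reconstructed from the cycle relation in $G_1'$ using any path from $\bar{u}$ to $\bar{v}$ in $G_1'\subtract\{\bar{e}\}$, existing because $\bar{e}$ is not a bridge in $G_1'$---and $b:=\lambda_{\bar{e}_1}/a$.

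The main technical obstacle is verifying that this rescaling is a bijection of reduced schemes for generic $\lambda$; the key point is that the scaling factors $a, b$ and the extension $\tilde{x}_{\bar{u}\bar{v}}$ are nowhere vanishing on the respective schemes, which is an open condition in $\lambda$ and hence holds generically. Granted this, $\Lam(B)=|Z^B_{\C}|=|Z_1|\cdot|Z_2|=\Lam(B_1)\cdot\Lam(B_2)$.
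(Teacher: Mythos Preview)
Your argument is correct and follows a genuinely different path from the paper's proof.  The paper works entirely at the level of the projective maps: it constructs birational isomorphisms
\[
  \p^{\dim(G)-1}_{\C}\dashrightarrow
  \p^{\dim(G_1')-1}_{\C}\times\p^{\dim(G_2')-1}_{\C}\times\p^1_{\C}
\]
(and similarly for $H$ and for the target $\p^{|\edg|-1}_{\C}$), defines a map $\hat{f}$ making the obvious square commute with~$f_B$, and reads off from the explicit shape of~$\hat{f}$ that $\deg(\hat{f})=\deg(f_{B_1})\cdot\deg(f_{B_2})$.  In other words, the paper factors the \emph{map}; you factor the \emph{fiber}.

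Your approach is more elementary in that it avoids birational geometry and stays with the affine model $Z^B_{\C}$ developed in Lemma~2.12 (\emph{lemma:passing\_to\_affine}); the only tool beyond that is a rescaling bijection.  The price you pay is the bookkeeping around that rescaling: you must normalise $Z_1$ (which carries only the single constraint $x_{\bar u\bar v}=1$, not a $y$-normalisation) against $Z^{B_1}_{\C}$ (which is normalised at a different biedge $\bar e_1$), and you must reconstruct the forgotten coordinate $x_{\bar u\bar v}$ via a cycle through~$\bar e$---which is exactly where the non-bridge hypothesis enters.  The paper's argument is slicker on this point because the extra $\p^1$ factors absorb the rescaling freedom uniformly.

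Both proofs implicitly leave the degenerate cases ($\Lam(B)=0$ or $\Lam(B)=\infty$) to the reader, but your product decomposition $Z^B_{\C}\cong Z_1\times Z_2$ handles the zero case transparently; the paper's statement is likewise only used later for pseudo-Laman bigraphs where finiteness is automatic.
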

\begin{proof}
We use the notation from \Cref{definition:untangles}. 
The hypothesis implies that 
\[
 \dim(G) \, = \, \dim(G_1') + \dim(G_2') \quad \text{ and } \quad
 \dim(H) \, = \, \dim(H_1') + \dim(H_2').
\]
Set $\{\bar{u}, \bar{v}\} = \tau_G(\bar{e})$ and $\{\bar{t}, \bar{w}\} = 
\tau_H(\bar{e})$.
Fix a biedge~$e_1 \in \edg_1$ and let
$\{t_1, w_1\} = \tau_H(e_1)$. Similarly, fix a biedge~$e_2 \in \edg_2$ 
and let $\{u_2, v_2\} = \tau_G(e_2)$.
We consider the following three rational maps:
\begin{gather*}
 \begin{array}{rcl}
  \p^{\dim(G)-1}_{\C} & \dashrightarrow & \p^{\dim(G_1')-1}_{\C} \times 
	\p^{\dim(G_2')-1}_{\C} \times \p^1_{\C} \\[2pt]
  ([(x_v)_{v \in V}]) & \longmapsto & \bigl( [(x_v)_{v \in V_1'}], [(x_v)_{v 
\in 
V_2'}],
		(x_{\bar{u}} - x_{\bar{v}}: x_{u_2} - x_{v_2}) \bigr)
	\\[8pt]
  \p^{\dim(H)-1}_{\C} & \dashrightarrow & \p^{\dim(H_1')-1}_{\C} \times 
	\p^{\dim(H_2')-1}_{\C} \times \p^1_{\C} \\[2pt]
  ([(y_w)_{w \in W}]) & \longmapsto & \bigl( [(y_w)_{w \in W_1'}], [(y_w)_{w 
\in 
W_2'}],
		(y_{\bar{t}} - y_{\bar{w}}: x_{t_1} - x_{w_1}) \bigr)
	\\[8pt]
  \p^{|\edg|-1}_{\C} & \dashrightarrow & \p^{|\edg_1|-1}_{\C} \times 
	\p^{|\edg_2|-1}_{\C} \times \p^1_{\C} \times \p^1_{\C} \\[2pt]
  (z_e)_{e \in \edg} & \longmapsto & \bigl( (z_e)_{e \in \edg_1}, (z_e)_{e \in 
\edg_2},
		(z_{\bar{e}}: z_{e_1}), (z_{\bar{e}}: z_{e_2}) \bigr) 
 \end{array}
\end{gather*}
One can check that these maps are birational. We define the rational 
map~$\hat{f}$ so that the following diagram is commutative:
\[
 \xymatrix@C=2cm{\p^{\dim(G)-1}_{\C} \times \p^{\dim(H)-1}_{\C} \ar@{<-->}[r] 
\ar@{-->}[d]^{f_B} & \mbox{$
 \begin{array}{c}
  \left( \p^{\dim(G_1')-1}_{\C} \times \p^{\dim(G_2')-1}_{\C} \times \p^1_{\C} 
\right) \\ \bigtimes \\ 
  \left( \p^{\dim(H_1')-1}_{\C} \times \p^{\dim(H_2')-1}_{\C} \times \p^1_{\C} 
\right) 
 \end{array} $} \ar@{-->}[d]^-{\hat{f}} \\
\p^{|\edg|-1}_{\C} \ar@{<-->}[r] & \p^{|\edg_1|-1}_{\C} \times 
\p^{|\edg_2|-1}_{\C} \times \p^1_{\C} \times \p^1_{\C} } 
\]
It follows that ${\deg}{\left( f_B \right)} = \deg \bigl( \hat{f} \bigr)$.
Denote $[(x_v)_{v \in V_i'}]$ by~$[X_i]$ for $i \in \{1,2\}$, and denote 
$[(y_w)_{w \in W_i'}]$ by~$[Y_i]$ for $i \in \{1,2\}$. An explicit computation 
shows that $\hat{f}$ sends a point
\[
 \Bigl( [X_1], [X_2], (\mu_G: \nu_G) \Bigr), 
 \Bigl( [Y_1], [Y_2], (\mu_H: \nu_H) \Bigr)
\]
to the point
\[
 \Bigl( 
  \underbrace{f_{B_1} \bigl( [X_1],[Y_1] \bigr)}_{\in \p^{|\edg_1|-1}_{\C}}, 
  \underbrace{f_{B_2} \bigl( [X_2],[Y_2] \bigr)}_{\in \p^{|\edg_2|-1}_{\C}}, 
  \underbrace{\bigl( \mu_G\,\delta_G([X_1]):\nu_G\,\delta_G([X_1]) 
\bigr)}_{\in \p^1_{\C}},
  \underbrace{\bigl( \mu_H\,\delta_H([Y_2]):\nu_H\,\delta_H([Y_2]) 
\bigr)}_{\in \p^1_{\C}} 
 \Bigr),
\]
where $\delta_G \colon \p^{\dim(G_1')-1}_{\C} \dashrightarrow \C$ and 
$\delta_H \colon \p^{\dim(H_2')-1}_{\C} \dashrightarrow \C$ 
are some rational functions. 
From 
the explicit form of~$\hat{f}$ we see that $\deg \bigl( \hat{f} \bigr) = 
\deg \bigl( \hat{f}_1 \bigr) \cdot \deg \bigl( \hat{f}_2 \bigr)$, where
the map~$\hat{f}_1$ is given by
\[
 \begin{array}{rrcl}
  \p^{\dim(G_1')-1}_{\C} \times \p^{\dim(H_1')-1}_{\C}
  \times\p^1_{\C} & \dashrightarrow & \p^{|\edg_1|-1}_{\C} \times \p^1_{\C} 
\\[4pt]
  \bigl( [X_1], [Y_1], (\mu_G: \nu_G) \bigr) & \longmapsto &
  \!f_{B_1} \bigl([X_1],[Y_1] \bigr), \bigl( \mu_G\,\delta_G([X_1]): 
\nu_G\,\delta_G([X_1]) \bigr)\!
 \end{array}
\]
and similarly for~$\hat{f}_2$. Note that for both $i \in \{1, 2\}$, 
the map~$\hat{f}_i$ is the restriction to a suitable open set of 
the map~$f_{B_i} \times \mathrm{id}_{\p^1_{\C}}$, since the rational 
maps~$\delta_G$ and~$\delta_H$ do not have any other influence than restricting 
the domain of the map. This means that $\deg \bigl( \hat{f}_i \bigr) = {\deg}{ 
\left( f_{B_i} \right)}$ for both $i \in \{1, 2\}$, which concludes the proof.
\end{proof}

\begin{lemma}
\label{lemma:single_bridge}
If a pseudo-Laman bigraph $B = (G,H)$  without self-loops untangles via 
$\bar{e}\in\edg$ into bigraphs~$B_1$ and~$B_2$ such that $\bar{e}$ is a bridge 
in~$G$ but not in~$H$, then $\Lam(B) = 0$.
\end{lemma}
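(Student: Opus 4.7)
The plan is to reduce to a clean $\C^*$-action argument in the spirit of Lemma~\ref{lemma:two_bridges}. Since $\bar{e}$ is a bridge in $G$ but not in $H$, Lemma~\ref{lemma:remove_edge} immediately gives $\Lam(B) = \Lam(\widetilde{B})$ where $\widetilde{B} := \bigl( G \subtract \{\bar{e}\},\, H \subtract \{\bar{e}\} \bigr)$, so it suffices to prove $\Lam(\widetilde{B}) = 0$. A short case analysis on which endpoints of $\bar{e}$ carry further incident edges shows that removing the bridge $\bar{e}$ decreases $\dim(G)$ by exactly $1$, while removing the non-bridge $\bar{e}$ from $H$ leaves $\dim(H)$ unchanged (both endpoints keep other incident edges, so no vertex is deleted and no new component is created). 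Hence $\dim(\widetilde{G}) + \dim(\widetilde{H}) = |\edg|$, so $\widetilde{B}$ is still pseudo-Laman.

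Next, the untangling of $B$ descends to a genuine disjoint decomposition of $\widetilde{B}$: we have $\widetilde{G} = (G_1' \subtract \{\bar{e}\}) \sqcup G_2'$, $\widetilde{H} = H_1' \sqcup (H_2' \subtract \{\bar{e}\})$, and the biedge set splits as $\edg_1 \sqcup \edg_2$. Correspondingly the ordered-pair sets split as $P = P_1 \sqcup P_2$ and $Q = Q_1 \sqcup Q_2$; every product equation of $Z^{\widetilde{B}}_{\C}$ and every cycle equation in $\widetilde{G}$ or $\widetilde{H}$ involves coordinates from only one of the two blocks. Both $\edg_1$ and $\edg_2$ are nonempty under the hypotheses: $\edg_2 \neq \emptyset$ since otherwise $\bar{e}$ would be the only edge of $H_2'$ and hence a bridge in $H$, and $\edg_1 \neq \emptyset$ is the nontriviality implicit in ``untangling'' as used, for example, in the proof of Proposition~\ref{proposition:splitting}.

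Now pick $\tilde{e} \in \edg_1$ and use it to define $Z^{\widetilde{B}}_{\C}$ as in Definition~\ref{definition:affine_sols}. The normalizing equations $x_{\tilde{u}\tilde{v}} = y_{\tilde{t}\tilde{w}} = 1$ involve only coordinates from $P_1 \cup Q_1$, while the entire $\edg_2$-block --- the products $x_{uv}\,y_{tw} = \lambda_e$ for $e \in \edg_2$ together with the cycle equations in $G_2'$ and in $H_2' \subtract \{\bar{e}\}$ --- is homogeneous under the rescaling $\phi_\eta \colon (x_{P_2}, y_{Q_2}) \mapsto (\eta\,x_{P_2},\, \eta^{-1}\,y_{Q_2})$ that fixes the $\edg_1$-coordinates. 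Hence $\phi_\eta$ defines a $\C^*$-action on $Z^{\widetilde{B}}_{\C}$, and for generic $\lambda_e \neq 0$ with $e \in \edg_2$ the product equation forces $x_{uv} \neq 0$, so every orbit is infinite. Suppose for contradiction that $\Lam(\widetilde{B}) > 0$: since $\widetilde{B}$ is pseudo-Laman, Remark~\ref{remark:finite_laman_number} together with Lemma~\ref{lemma:passing_to_affine} force $Z^{\widetilde{B}}_{\C}$ to be a nonempty finite scheme, contradicting the existence of any $\C^*$-orbit. Therefore $\Lam(\widetilde{B}) = 0$, and the reduction at the start gives $\Lam(B) = 0$. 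The main obstacle I expect is the decoupling claim in the second paragraph: one must verify that after removing $\bar{e}$ no cycle of $\widetilde{G}$ or $\widetilde{H}$ straddles the two sides of the untangling, which is automatic because any such cycle would have been forced to traverse $\bar{e}$ in the original bigraph.
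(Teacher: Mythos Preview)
Your proof is correct and follows essentially the same route as the paper: reduce via Lemma~\ref{lemma:remove_edge} to $\widetilde{B}$, observe that $\widetilde{B}$ decomposes as the disjoint union of $B_1$ and $B_2$, and then run the $\C^*$-scaling argument of Lemma~\ref{lemma:two_bridges} on one block to contradict finiteness. You are more explicit than the paper about verifying that $\widetilde{B}$ is pseudo-Laman and about the nonemptiness of $\edg_1,\edg_2$, but the core idea is identical; note only that your appeal to Proposition~\ref{proposition:splitting} for $\edg_1\neq\emptyset$ is not quite the right justification (there nonemptiness follows from $\bar{e}$ not being a bridge, which fails here), though the paper's own proof is equally informal on this point.
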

\begin{proof}
 By \Cref{lemma:remove_edge}, we know that $\Lam(B) = 
\Lam\bigl(\widetilde{B}\bigr)$, 
where $\widetilde{B}=\bigl( G \subtract \{ \bar{e} 
\}, H \subtract \{ \bar{e} \} \bigr)$. 
It follows that $\widetilde{B}$ is the disjoint union of $B_1$ and $B_2$.
Using the same technique adopted in \Cref{lemma:two_bridges} we see that 
if $\Lam\bigl(\widetilde{B}\bigr)$ were positive, then we could scale the 
points 
in 
$Z^{B_1}_{\C}$ by arbitrary scalars $\eta \in \C \setminus \{0\}$, obtaining an 
infinite Laman number. This would contradict the pseudo-Laman hypothesis, so 
the statement is proved.
\end{proof}

\begin{lemma}
\label{lemma:bridge_pseudo}
If a pseudo-Laman bigraph $B = (G,H)$ without self-loops untangles via 
$\bar{e}\in\edg$ into bigraphs~$B_1$ and~$B_2$, where $\bar{e}$ is a bridge 
in~$G$, then either~$B_1$ or~$B_2$ is not pseudo-Laman. 
\end{lemma}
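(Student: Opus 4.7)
The plan is to prove the lemma by a straightforward dimension count. Recall that $B$ being pseudo-Laman says $\dim(G) + \dim(H) = |\edg| + 1$, while $\dim$ is additive over disjoint unions. So from the untangling decomposition we immediately obtain
\[
 \dim(G_1') + \dim(G_2') + \dim(H_1') + \dim(H_2') \, = \, |\edg_1| + |\edg_2| + 2.
\]
I will then translate the pseudo-Laman conditions for $B_1$ and $B_2$ into constraints on the same four numbers and show that the two constraints together are incompatible with the displayed equality.

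To turn those conditions into statements about the $G_i'$ and $H_i'$, the key input is a short calculation: if $\bar{e}$ is a bridge in a graph~$K$ (without self-loops), then $\dim(K \subtract \{\bar{e}\}) = \dim(K) - 1$, using the formula $\dim(K) = |V(K)| - c(K)$ where $c(K)$ is the number of connected components. This is immediate when neither endpoint of $\bar{e}$ becomes isolated, but I would also need to check the cases where one or both endpoints become isolated and are therefore removed by $\subtract$; in each of these sub-cases a lost vertex is compensated by a component that disappears, so the formula holds uniformly. Since $\bar{e}$ is a bridge in $G$ and $G = G_1' \cup G_2'$ with $\bar{e} \in G_1'$, this gives $\dim(G_1' \subtract \{\bar{e}\}) = \dim(G_1') - 1$, so $B_1$ is pseudo-Laman precisely when $\dim(G_1') + \dim(H_1') = |\edg_1| + 2$.

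For $B_2$ I would split into two cases depending on whether $\bar{e}$ is a bridge in $H$ (equivalently, in $H_2'$). If it is, then by the same dimension identity $\dim(H_2' \subtract \{\bar{e}\}) = \dim(H_2') - 1$, so $B_2$ pseudo-Laman reads $\dim(G_2') + \dim(H_2') = |\edg_2| + 2$; summing with the $B_1$ condition forces $|\edg_1| + |\edg_2| + 4$ on the left-hand side of the displayed equation above, contradicting $|\edg_1| + |\edg_2| + 2$. If $\bar{e}$ is not a bridge in $H_2'$, then removing it preserves connected components and keeps both endpoints non-isolated (they are joined by another path), so $\dim(H_2' \subtract \{\bar{e}\}) = \dim(H_2')$ and $B_2$ pseudo-Laman becomes $\dim(G_2') + \dim(H_2') = |\edg_2| + 1$; summing with the $B_1$ condition now gives $|\edg_1| + |\edg_2| + 3$, again contradicting the displayed identity. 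Either way both cannot be pseudo-Laman, which is the claim.

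The only slightly delicate step is the uniform formula $\dim(K \subtract \{\bar{e}\}) = \dim(K) - 1$ for a bridge, because the $\subtract$ operation may delete endpoints of $\bar{e}$ that become isolated; I would make sure to treat the three sub-cases (zero, one, or two endpoints removed) separately and verify that the tally of vertices and components gives the same net drop of one in each. Everything else is bookkeeping.
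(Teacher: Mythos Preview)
Your proof is correct and follows essentially the same approach as the paper: assume both $B_1$ and $B_2$ are pseudo-Laman, compute $\dim(G_i' \subtract \{\bar{e}\})$ and $\dim(H_i' \subtract \{\bar{e}\})$ according to whether $\bar{e}$ is a bridge, and add the two pseudo-Laman equalities to contradict $\dim(G)+\dim(H)=|\edg|+1$. The paper compresses your two cases into a single inequality, while you are slightly more careful about the possibility that $\subtract$ removes isolated endpoints; otherwise the arguments are the same.
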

\begin{proof}
Suppose that both $B_1$ and $B_2$ are pseudo-Laman; we show that this leads to
a contradiction. Using the hypothesis one sees that $\dim \bigl( G_1' \subtract 
\{ \bar{e} \} \bigr) = \dim(G_1') - 1$. Moreover, if 
$\bar{e}$ is also a bridge, then $\dim \bigl( H_2' \subtract 
\{ \bar{e} \} \bigr) = \dim(H_2') - 1$, otherwise we have $\dim 
\bigl( H_2' \subtract \{ \bar{e} \} \bigr) = \dim(H_2')$. 
Since $\dim(G) = \dim(G_1') + \dim(G_2')$ and $\dim(H) = \dim(H_1') 
+ \dim(H_2')$, the pseudo-Lamanity of~$B_1$ and~$B_2$ implies 
\[
 \dim(G_1') - 1 + \dim(G_2') + \dim(H_1') + \dim(H_2') \, \geq \, |\edg_1| + 
|\edg_2| +2,
\]
where $\edg_1$ and~$\edg_2$ are the biedges of~$B_1$ and~$B_2$, respectively 
(the inequality~$\geq$ takes into account the fact that $\bar{e}$ 
may or may not be a bridge). Since $|\edg| = |\edg_1| + |\edg_2| + 1$, the 
previous equation in turn implies
\[
 \dim(G) + \dim(H) \, \geq  \, |\edg| + 2,
\]
contradicting the hypothesis that $B$ is pseudo-Laman.
\end{proof}

\section{Bidistances and quotients}
\label{bidistances}

Tropical geometry is a technique that allows us to transform systems of 
polynomial 
equations into systems of piecewise linear equations. This is possible if one 
works over the field of Puiseux series. An algebraic relation between Puiseux 
series implies a piecewise linear relation between their orders (which are 
rational numbers). One hopes that the piecewise linear system is easier to 
solve; if so, one has candidates for the orders of solutions of the initial 
system, and sometimes this is enough to obtain the desired information. This 
technique has been successfully used, amongst others, by 
Mikhalkin~\cite{Mikhalkin2005} to count the number of algebraic curves with 
some prescribed properties.

We use a similar idea for computing the Laman number of a pseudo-Laman bigraph. 
As we pointed out in the Introduction, this amounts to compute the base degree 
of the algebraic matroid associated to the variety parametrizing distances 
between pairs of points; however, we do not use the matroid formalism in our 
work.  
For each pseudo-Laman bigraph~$B$, we need to know the number of solutions 
of the system defining~$Z^B_\C$. This number coincides with the number of 
solutions of a ``perturbed'' system over the Puiseux field 
(\Cref{lemma:degree_fiber}). 
The orders of each solution of the new system satisfy piecewise 
linear conditions (\Cref{definition:distances}). We prove 
(\Cref{lemma:equations_Z,lemma:induction,lemma:specialization}) that the
Puiseux series solutions sharing the same orders are in bijection with the
complex solutions of another system of equations of a certain ``quotient
bigraph''.  This yields a first recursive scheme
(\Cref{theorem:algebraic_counting}).

\begin{notation}
 Denote by~$\K$ the field $\C \{\!\{ s \}\!\}$ of Puiseux series with 
coefficients in~$\C$. Recall that $\K$ is of characteristic zero and is 
algebraically closed. The field~$\K$ is equipped with a valuation $\nu \colon 
\K \setminus \{0\} \longrightarrow \Q$ associating to an element 
$\sum_{i=k}^{+\infty} c_i \tth s^{i/n}$ the rational 
number~$k/n$, where $k\in\Z$ and $c_k\neq0$. Recall that $\nu(a \cdot b) = 
\nu(a) + \nu(b)$ and $\nu(a+b) \geq 
\min \{ \nu(a), \nu(b) \}$.
\end{notation}

\begin{definition}
 Let $B = (G,H)$ be a bigraph. Define $f_{B, \K}$ to be the map obtained as the 
extension 
of scalars, via the natural inclusion $\C \hookrightarrow \K$, of the rational
map~$f_{B}$ associated to~$B$ (see \Cref{definition:bigraph_map}). 
This means that, with the notation as in 
\Cref{definition:bigraph_space}, 
we define
\begin{align*}
 \p^{\dim(G) - 1}_{\K} & := \p \bigl( \K^V \bigquotient (L_G \otimes_{\C} \K)
\bigr),
&
 \p^{\dim(H) - 1}_{\K} & := \p \bigl( \K^W \bigquotient (L_H \otimes_{\C} \K)
\bigr),
\end{align*}
and then $f_{B,\K} \colon \p^{\dim(G) - 1}_{\K} \times \p^{\dim(H) - 1}_{\K} 
\dashrightarrow \p^{|\edg|-1}_{\K}$ is given by the same equations as~$f_{B}$.
\end{definition}

\begin{remark}
\label{remark:degree_extension}
By construction, ${\deg}{\left( f_{B} \right)}$ is defined if and only 
if ${\deg}{\left( f_{B,\K} \right)}$ is defined, and in that case they 
coincide. 
In fact, $f_{B}$ is dominant if and only if $f_{B, \K}$ is so. In this case, 
let 
$Y_{\C}$ be the open subset where $f_{B}$ is defined. Because $f_{B} \colon 
Y_{\C} \longrightarrow \p^{|\edg|-1}_{\C}$ is a dominant morphism between 
complex varieties, there exists an open subset $\mcal{U}_{\C} \subseteq 
\p^{|\edg|-1}_{\C}$ such that the fiber of~$f_{B}$ over any point 
of~$\mcal{U}_{\C}$ consists of~${\deg}{\left( f_B \right)}$ distinct points. 
Since $f_{B, \K}$ is the extension of scalars of~$f_B$, it follows that also 
the 
fiber of~$f_{B, \K}$ over any point in $\mcal{U}_{\K} := \mcal{U}_{\C} 
\times_{\Spec(\C)} \Spec(\K)$ consists of ${\deg}{\left( f_B \right)}$ 
distinct points. 
In fact, for every $q_{_{\K}} \in \mcal{U}_{\K}$ we have $f_{B, 
\K}^{-1}(q_{_{\K}}) \cong f_{B}^{-1}(q_{_{\C}}) \times_{\Spec(\C)} 
\Spec{(\K)}$, 
where $q_{_{\C}}$ is the image of~$q_{_{\K}}$ under the natural morphism 
$\mcal{U}_{\K} \longrightarrow \mcal{U}_{\C}$. Hence the cardinality of~$f_{B, 
\K}^{-1}(q_{_{\K}})$ is equal to the cardinality 
of~$f_B^{-1}(q_{_{\C}})$ and therefore ${\deg}{\left( f_{B,\K} \right)} =
{\deg}{\left( f_B \right)}$.
\end{remark}

The fact that the map~$f_{B, \K}$ is defined over~$\C$, and not over~$\K$, 
gives us a lot of freedom concerning the valuation of the general point whose
fiber we consider. More precisely:

\begin{lemma}
\label{lemma:degree_fiber}
 Let $B$ be a bigraph such that $\Lam(B)>0$. Fix a vector $\wt = \bigl( \wt(e) 
\bigr)_{e \in \edg} \in \Q^{\edg}$. Then ${\deg}{\left( f_{B, \K} \right)}$ 
coincides with the cardinality of the fiber of~$f_{B, \K}$ over any point $p 
\in 
\p^{|\edg|-1}_{\K}$ of the form $p = \bigl( \lambda_e \tth s^{\wt(e)} \bigr)_{e 
\in \edg}$, where $(\lambda_e)_{e \in \edg}$ is a general point in~$\C^{\edg}$. 
\end{lemma}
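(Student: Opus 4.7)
The plan is to invoke \Cref{remark:degree_extension} to reduce the claim to a containment. That remark produces a Zariski open subset $\mcal{U}_{\C} \subseteq \p^{|\edg|-1}_{\C}$ over which every fiber of $f_{B}$, and equivalently every fiber of $f_{B,\K}$ over the base-changed open $\mcal{U}_{\K} := \mcal{U}_{\C}\times_{\Spec(\C)}\Spec(\K)$, consists of exactly $\deg(f_{B,\K}) = \Lam(B)$ distinct points. Hence it is enough to exhibit a nonempty Zariski open subset of $(\lambda_e)_{e\in\edg} \in \C^{\edg}$ for which the point $p = \bigl(\lambda_e \tth s^{\wt(e)}\bigr)_{e \in \edg}$ lies in $\mcal{U}_{\K}$.

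First I would rephrase $p$ as the image of the $\C$-point $q := [(\lambda_e)_{e\in\edg}] \in \p^{|\edg|-1}_{\C} \subseteq \p^{|\edg|-1}_{\K}$ under the $\K$-linear automorphism $T_{\wt}$ of $\p^{|\edg|-1}_{\K}$ that rescales the $e$-th homogeneous coordinate by $s^{\wt(e)}$. Let $V := \p^{|\edg|-1}_{\C}\setminus\mcal{U}_{\C}$ and fix homogeneous polynomials $g_{1},\dotsc,g_{r} \in \C[z_{e} : e\in\edg]$ cutting out $V$. The required containment $p \in \mcal{U}_{\K}$ is then equivalent to the non-vanishing, in $\K$, of each Puiseux series $g_{j}\bigl(\lambda_e \tth s^{\wt(e)}\bigr)$.

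The crucial observation is that, upon substituting $z_e = \lambda_e \tth s^{\wt(e)}$ into $g_j$, each monomial $c\prod_e z_e^{a_e}$ contributes $c\,(\prod_e \lambda_e^{a_e})\,s^{\sum_e a_e \wt(e)}$, so grouping terms by $\wt$-degree gives the expansion $g_j\bigl(\lambda_e \tth s^{\wt(e)}\bigr) = \sum_{d} g_j^{(d)}\bigl((\lambda_e)_{e\in\edg}\bigr)\,s^d$, where $g_j^{(d)} \in \C[z_e : e\in\edg]$ is the sum of those monomials of $g_j$ whose $\wt$-degree equals $d$. Since $g_j$ is nonzero, at least one piece $g_j^{(d)}$ is itself a nonzero $\C$-polynomial in $(\lambda_e)_{e\in\edg}$, and hence is nonzero on a nonempty Zariski open subset of $\C^{\edg}$. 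Intersecting these open sets over $j=1,\dotsc,r$ yields the set of general $\lambda$ for which $p \in \mcal{U}_{\K}$, and the lemma follows.

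The key subtlety, and the reason the statement requires $(\lambda_e)$ to be general in $\C^{\edg}$ rather than in $\K^{\edg}$, is precisely that the equations cutting out the complement of $\mcal{U}_{\K}$ come from the $\C$-defined polynomials $g_j$; the $\wt$-graded decomposition above is exactly what converts the condition that $p$ avoid $V$ into a $\C$-polynomial condition on $(\lambda_e)$, where ordinary genericity suffices.
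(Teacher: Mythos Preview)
Your argument is correct and follows essentially the same route as the paper's own proof: both reduce, via \Cref{remark:degree_extension}, to showing $p\in\mcal{U}_{\K}$, and both exploit that the defining polynomials of the complement have coefficients in~$\C$, so that substituting $z_e=\lambda_e\,s^{\wt(e)}$ turns nonvanishing in~$\K$ into a $\C$-polynomial condition on~$(\lambda_e)$. One wording slip: since $V$ is the common zero locus of the~$g_j$, membership $p\in\mcal{U}_{\K}$ is equivalent to the nonvanishing of \emph{some} $g_j(p)$, not of each; your proof still goes through because you establish the stronger (sufficient) condition that all $g_j(p)\neq 0$ for general~$\lambda$.
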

\begin{proof}
 Consider the rational map $f_B \colon \p^{\dim(G) - 1}_{\C} \times \p^{\dim(H) 
- 1}_{\C} \dashrightarrow \p^{|\edg|-1}_{\C}$, which is dominant by 
hypothesis. To prove the statement it is enough to show that a 
point~$p$ satisfying the hypothesis lies in the set~$\mcal{U}_{\K}$ defined in 
\Cref{remark:degree_extension}. Suppose by contradiction that $p \not\in 
\mcal{U}_{\K}$. Since $\mcal{U}_{\K}$ is Zariski open, it is 
defined by a disjunction of polynomial inequalities with coefficients in~$\C$. 
Let $g \neq 0$ be one of 
these inequalities: by assumption $g(p) = 0$, but this implies that 
$\tilde{g}\bigl( (\lambda_e)_{e \in \edg} \bigr) = 0$ for some non-zero 
polynomial~$\tilde{g}$ over~$\C$, contradicting the generality 
of~$(\lambda_e)_{e \in \edg}$.
\end{proof}

Let $B$ be a bigraph such that $\Lam(B)>0$.
Fix a vector $\wt = \bigl( \wt(e) \bigr)_{e \in \edg} \in \Q^{\edg}$ and
a biedge $\bar{e} \in \edg$.
Arguing as in \Cref{remark:choice_fiber}, we see that it is 
enough to consider fibers of~$f_{B, \K}$ over points~$p$ of the form 
$p_{\bar{e}} = 1$, while $p_e = \lambda_e \tth s^{\wt(e)}$ for a general 
point $(\lambda_e)_{e \in \edg \setminus \{ \bar{e} \}}$ in $\C^{\edg \setminus 
\{ \bar{e} \}}$.
This is why we formulate the following assumption, which is used throughout 
this section.

\begin{assumption}\label{assumption:general}
	Let $B$ be a pseudo-Laman bigraph with biedges~$\edg$ such that $\Lam(B)>0$. 
Notice that by \Cref{proposition:base_cases} this implies 
that $B$ has no self-loops.
	Fix a biedge~$\bar{e} \in \edg$, fix 
	$\wt \in \Q^{\edg \setminus \{\bar{e}\}}$ and let $(\lambda_e)_{e \in 
	\edg \setminus\{\bar{e}\}}$ be a general point in~$\C^{\edg \setminus 
	\{\bar{e}\}}$. Let $p \in \p^{|\edg|-1}_{\K}$ be such that $p_{\bar{e}} = 1$ 
	and $p_e = \lambda_e \tth s^{\wt(e)}$ for all biedges~$e \in \edg 
	\setminus\{\bar{e}\}$.
\end{assumption}

\begin{remark}
\label{remark:passing_to_affine_puiseux}
 Let $B = (G,H)$ be a bigraph with biedges~$\edg$ and use 
\Cref{assumption:general}.
 Following \Cref{lemma:passing_to_affine} one can prove that $f^{-1}_{B, 
\K}(p)$ is isomorphic to 
\[
 Z^B_{\K} \, := \, \mathrm{Spec} 
 \left(\!
  \begin{array}{ll}
   x_{\bar{u} \bar{v}} = y_{\bar{t} \bar{w}} = 1 &
    \bar{u}\prec \bar{v}, \, \bar{t}\prec \bar{w} \\[2pt]
   x_{uv} \tth y_{tw} = \lambda_e \tth s^{\wt(e)} &
    \text{for all } e \in \edg \setminus \{ \bar{e} \}, \, u\prec v, \, t\prec 
w 
\\[2pt]
  \sum_{\mscr{C}} \, x_{uv} = 0  & \text{for all cycles } \mscr{C} 
\text{ in } G \\[2pt]
   \sum_{\mscr{D}} \, y_{tw} = 0 & \text{for all cycles } \mscr{D} 
\text{ in } H
  \end{array}\!
 \right) \subseteq \K^{P} \times \K^{Q},
\]
where the notation is as in \Cref{definition:affine_sols}.
\end{remark}

\begin{example}
\label{example:Z_K}
 We continue with \Cref{example:Z_C}: if we fix the vector~$\wt$ to be $( 1 
)_{\edg \setminus \{ \bar{e} \}}$, then the scheme~$Z^{B}_{\K}$ is defined by 
the equations
\begin{align*}
	x_{23} &= 1, & x_{12}\tth y_{12} &= \lambda_\colcA\tth s, & x_{24}\tth y_{24} 
&= \lambda_\colcD\tth s, \\
	y_{23} &= 1, & x_{13}\tth y_{13} &= \lambda_\colcB\tth s, & x_{34}\tth y_{34} 
&= \lambda_\colcE\tth s,
\end{align*}
and by the equations coming from the cycles (they are the same as in 
\Cref{example:Z_C}).
\end{example}

If $p$ is a point of the form $\bigl(\lambda_e \tth s^{\wt(e)}\bigr)_{e \in 
\edg}$ in the codomain of the map~$f_{B, \K}$, then for every point $q \in 
f^{-1}_{B, \K}(p)$ we can consider the vector of the valuations of its 
coordinates. In terms of tropical geometry, this means that we take the 
tropicalization of the preimage~$f^{-1}_{B, \K}(p)$. In 
\Cref{definition:distances} we associate 
to each such point~$q$ a discrete object, which we call \emph{bidistance} 
(see \Cref{definition:bidistance}). We then partition the set~$f^{-1}_{B, 
\K}(p)$ according to the bidistances that are determined by its points.

\begin{definition}
\label{definition:distances}
 Let $B = (G,H)$ be a bigraph with biedges~$\edg$ and use 
\Cref{assumption:general}. Fix 
$q \in f_{B, \K}^{-1}(p)$. Then $q 
= \bigl( [(x_v)_{v \in V}], [(y_w)_{w \in W}] \bigr)$ and by construction
\begin{gather*}
 \frac{x_{u} - x_{v}}{x_{\bar{u}} - x_{\bar{v}}} \cdot 
\frac{y_{t} - y_{w}}{y_{\bar{t}} - y_{\bar{w}}} \, = 
\, \lambda_{e} \tth s^{\wt(e)} \quad \text{for all } e \in 
\edg \setminus\{\bar{e}\}, \text{ where} \\
 \begin{array}{ll}
  \{\bar{u}, \bar{v}\}=\tau_G(\bar{e}), & \bar{u}\prec\bar{v}, \\
  \{\bar{t}, \bar{w}\}=\tau_H(\bar{e}), & \bar{t}\prec\bar{w},
 \end{array}
 \quad \text{and} \quad 
 \begin{array}{ll}
  \{u,v\}=\tau_G(e), & u\prec v, \\
  \{t,w\}=\tau_H(e), & t\prec w.
 \end{array}
\end{gather*}
We define two functions $d_V \colon P \longrightarrow \Q$ and
$d_W \colon Q \longrightarrow \Q$, with $P$ and~$Q$ 
as in \Cref{definition:pairs}:
\begin{alignat*}2
 d_V(u,v) & := \nu \biggl( \frac{x_{u} - x_{v}}{x_{\bar{u}} - x_{\bar{v}}} 
\biggr)
 &\quad & \text{for all } (u,v) \in P, \\
 d_W(t,w) & := \nu \biggl( \frac{y_{t} - y_{w}}{y_{\bar{t}} - y_{\bar{w}}} 
\biggr)
 && \text{for all } (t,w) \in Q.
\end{alignat*}
Notice that the definition of~$P$ and~$Q$ implies that
$x_{u} - x_{v}$ and $y_{t} - y_{w}$ are always nonzero.
Moreover, both $d_V$ 
and $d_W$ depend on~$q$, but not on the representatives $(x_v)_{v \in V}$ 
and $(y_w)_{w \in W}$.
\end{definition}

\begin{lemma}
\label{lemma:properties_distance}
With the notation and assumptions as in \Cref{definition:distances}, the two
functions $d_V \colon P \longrightarrow \Q$ and $d_W \colon Q \longrightarrow 
\Q$ satisfy:
\begin{itemize}
 \item $d_V(u,v) = d_V(v,u)$ for all $(u,v) \in P$, and similarly for~$d_W$;
 \item $d_V(u,v) + d_W(t,w) = \wt(e)$ for all $e \in \edg \setminus \{ \bar{e} 
\}$,
  where $\{u, v\}=\tau_G(e)$ and $\{t, w\}=\tau_H(e)$;
 \item $d_V(\bar{u}, \bar{v}) = d_W(\bar{t}, \bar{w}) = 0$, where $\{\bar{u}, 
  \bar{v}\}=\tau_G(\bar{e})$ and $\{\bar{t}, \bar{w}\}=\tau_H(\bar{e})$;
 \item for every cycle~$\mscr{C}$ in~$G$, the minimum of the values of~$d_V$ on 
the pairs of vertices~$(u,v)$ appearing in~$\mscr{C}$ is attained at least 
twice, and similarly for~$d_W$.
\end{itemize}
\end{lemma}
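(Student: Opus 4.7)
The plan is to verify the four properties in order, relying only on the standard facts about the valuation~$\nu$ on~$\K$, namely $\nu(ab)=\nu(a)+\nu(b)$, $\nu(a+b)\ge\min\{\nu(a),\nu(b)\}$, with equality whenever the minimum is uniquely attained, and $\nu(c)=0$ for every $c\in\C\setminus\{0\}$.

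For the first property, note that $(x_u-x_v)=-(x_v-x_u)$ and $\nu(-1)=0$, so the two ratios defining $d_V(u,v)$ and $d_V(v,u)$ differ by a factor of~$-1$, hence have the same valuation; the argument for~$d_W$ is identical.

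For the second property, first suppose $u\prec v$ and $t\prec w$, so that the defining equation
\[
 \frac{x_{u}-x_{v}}{x_{\bar u}-x_{\bar v}}\cdot\frac{y_{t}-y_{w}}{y_{\bar t}-y_{\bar w}}
 \, = \, \lambda_e\tth s^{\wt(e)}
\]
holds. Taking valuations and using that $(\lambda_e)_{e\in\edg\setminus\{\bar e\}}$ is a general point in~$\C^{\edg\setminus\{\bar e\}}$ (so in particular $\lambda_e\ne0$ and $\nu(\lambda_e)=0$), we obtain $d_V(u,v)+d_W(t,w)=\wt(e)$. The remaining orderings of~$(u,v)$ and~$(t,w)$ follow from the first property. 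The third property is immediate, since in both cases the ratio equals~$1$ and $\nu(1)=0$.

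The substantive point is the fourth property, which is the tropical incarnation of the cycle relations in $Z^{B}_{\K}$. Let $\mscr{C}=(u_0,u_1,\dots,u_n=u_0)$ be a cycle in~$G$. The telescoping identity
\[
 \sum_{i=0}^{n-1}\bigl(x_{u_i}-x_{u_{i+1}}\bigr) \, = \, 0
\]
holds in~$\K$; dividing by $x_{\bar u}-x_{\bar v}$ yields a vanishing sum of elements of~$\K$ whose valuations are precisely the numbers $d_V(u_i,u_{i+1})$ appearing along~$\mscr{C}$. If the minimum of these valuations were attained only once, say at the index~$i_0$, then the ultrametric inequality applied to the remaining terms would give
\[
 \nu\biggl(\sum_{i\ne i_0}\frac{x_{u_i}-x_{u_{i+1}}}{x_{\bar u}-x_{\bar v}}\biggr) \, > \, d_V(u_{i_0},u_{i_0+1}),
\]
contradicting the fact that this sum equals $-(x_{u_{i_0}}-x_{u_{i_0+1}})/(x_{\bar u}-x_{\bar v})$, whose valuation is exactly $d_V(u_{i_0},u_{i_0+1})$. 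The same argument applies verbatim to cycles in~$H$.

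The main (and only) genuine obstacle is bookkeeping with signs and orderings: the functions $d_V,d_W$ are defined on ordered pairs, while the cycle and edge relations are most naturally written in terms of signed differences~$x_u-x_v$; property~(i) is precisely what allows us to move between these two points of view transparently.
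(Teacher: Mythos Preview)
Your proof is correct and follows essentially the same approach as the paper: both rely on the standard valuation identities $\nu(ab)=\nu(a)+\nu(b)$, $\nu(-a)=\nu(a)$, and the fact that a vanishing sum in a valued field cannot have a unique term of minimal valuation. The paper's version is more terse, simply citing these facts and the telescoping cycle relation, whereas you spell out the contradiction argument for the fourth property explicitly.
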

\begin{proof}
 The statement follows from the definitions and the properties of the 
valuation, see \cite[Section VI.3.1, Definition~1 and Corollary 
to Proposition~1]{Bourbaki1972}. In particular, we use that $\nu(a) = 
\nu(-a)$ and $\nu(a \cdot b) = \nu(a) + \nu(b)$ for all nonzero $a$ and~$b$.
The fourth property 
follows from $\sum_{\mscr{C}} \, (x_u - x_v)/(x_{\bar{u}} - x_{\bar{v}}) = 0$
if $\mscr{C}$ is a cycle in~$G$ (and 
similarly for cycles in~$H$): we employ the fact that if the sum 
of finitely many elements is zero, then the minimum of their 
valuations is achieved at least twice. Notice 
that the values $d_V(\bar{u}, \bar{v})$ and $d_W(\bar{t}, \bar{w})$ are defined 
because $\bar{e}$ is not a self-loop by \Cref{assumption:general}.
\end{proof}

\begin{definition}
\label{definition:bidistance}
 Let $B$ be a bigraph with biedges~$\edg$ without self-loops, let $\bar{e}$ 
be a fixed biedge, and 
let $\wt \in \Q^{\edg \setminus \{\bar{e}\}}$. A \emph{bidistance}~$d$ on~$B$
compatible with~$\wt$ is a pair $(d_V, d_W)$ of functions $d_V \colon P 
\longrightarrow \Q$ and $d_W \colon Q \longrightarrow \Q$ such that the 
conditions of \Cref{lemma:properties_distance} are satisfied. If the
weight vector is clear from the context, we omit the clause
``compatible with~$\wt$''.
\end{definition}

\begin{remark}
 Let $B$ be a bigraph and use \Cref{assumption:general}. Then any
$q \in f^{-1}_{B, \K}(p)$ defines a bidistance~$d$ on~$B$, and via the 
isomorphism provided by \Cref{remark:passing_to_affine_puiseux} also any 
point in~$Z_\K^{B}$ defines a bidistance.
\end{remark}

As mentioned before, we are going to count the number of points in a general 
fiber of~$f_{B, \K}$ that determine a fixed bidistance.
We do so by computing the Laman 
number of a ``smaller'' bigraph, obtained via a quotient 
operation as explained in \Cref{definition:quotient_bigraph}.

\begin{definition}
\label{definition:quotient_bigraph}
 Let $B=(G,H)$ be a bigraph with set of biedges~$\edg$ and without self-loops, 
and 
fix a bidistance $d = (d_V, d_W)$ on~$B$. We define a new bigraph $B_d$ as 
follows:
For every $\alpha 
\in \operatorname{im}(d_V)$, define the graphs $G_{\geq 
\alpha}$ and $G_{> \alpha}$ to be the subgraphs of~$G$ determined by all edges 
with endpoints~$u$ and~$v$ such that $d_V(u,v) \geq \alpha$ and $d_V(u,v) > 
\alpha$, respectively. Similarly, for every $\beta \in \operatorname{im}(d_W)$, 
define~$H_{\geq \beta}$ and~$H_{> \beta}$. Let
\[
  G_{d_V} \, := \, \bigdotcup{\alpha \in \operatorname{im}(d_V)}{} G_{\geq 
\alpha} \bigquotient G_{> \alpha}
  \qquad\text{and}\qquad
  H_{d_W} \, := \, \bigdotcup{\beta \in \operatorname{im}(d_W)}{} 
   H_{\geq \beta} \bigquotient H_{> \beta} \ .
\]
Here by $G_{\geq \alpha} \bigquotient G_{> \alpha}$ and $H_{\geq \beta} 
\bigquotient H_{> \beta}$ we mean the quotients of graphs as described in 
\Cref{definition:quotient_graphs}, followed by removing singleton 
components without edges. 
The union symbol~$\dotcup$ indicates the disjoint union of graphs. 

There is a natural bijection between edges of~$G$ and edges of~$G_{d_V}$, 
sending each edge~$e$ in~$G$ to the corresponding edge in the quotient $G_{\geq 
d_V(\tau_G(e))} \bigquotient G_{> d_V(\tau_G(e))}$.
We define $B_{d}$ to be the bigraph $(G_{d_V}, H_{d_W})$ with set of 
biedges~$\edg$ inherited from~$B$. Moreover, we fix any total order on the 
vertices in~$B_d$.
\end{definition}

\begin{remark}
\label{remark:quotient_nobidistance}
 Notice that in \Cref{definition:quotient_bigraph} we did not use any 
of the properties of bidistances. This means that the definition of~$B_d$ makes 
sense also for bigraphs~$B$ and pairs of functions $d_V \colon P 
\longrightarrow \Q$ and $d_W \colon Q \longrightarrow \Q$. This is 
important and useful in \Cref{formulas}.
\end{remark}

\begin{lemma}
\label{lemma:pseudo_laman_quotient}
If $B=(G,H)$ is a pseudo-Laman bigraph without self-loops and $d$ is a 
bidistance on~$B$, then the quotient graph~$B_{d}$ is also pseudo-Laman.
\end{lemma}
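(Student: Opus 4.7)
The plan is to reduce pseudo-Lamanity of $B_d$ to the two dimension identities
\[
  \dim(G_{d_V}) = \dim(G), \qquad \dim(H_{d_W}) = \dim(H).
\]
By \Cref{definition:quotient_bigraph}, $B_d$ carries the same biedge set $\edg$ as $B$, so $|E(B_d)| = |\edg|$; together with the hypothesis $\dim(G)+\dim(H) = |\edg|+1$, the two identities would give exactly the pseudo-Laman equation for $B_d$. The construction of $G_{d_V}$ from $G$ is parallel to that of $H_{d_W}$ from $H$, so I would treat only the first.

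First I would establish the general combinatorial identity: for any graph $\tilde G = (\tilde V, \tilde E)$ and any $\tilde E' \subseteq \tilde E$ with associated subgraph $\tilde G'$ (in the sense of \Cref{definition:quotient_graphs}),
\[
  \dim\bigl(\tilde G \quotient \tilde E'\bigr) \, = \, \dim(\tilde G) - \dim(\tilde G').
\]
Using $\dim = |V|-c$, the vertex set of the quotient (before singleton removal) has one class per connected component of $\tilde G'$ plus one class per vertex of $\tilde V \setminus V(\tilde G')$, giving $c(\tilde G') + |\tilde V|-|V(\tilde G')| = |\tilde V| - \dim(\tilde G')$ elements; contracting edges does not change the number of connected components, so $c(\tilde G \quotient \tilde E') = c(\tilde G)$. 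Removing a singleton component without edges decreases $|V|$ and $c$ by the same amount, so $\dim$ is unchanged.

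Second, I would apply this to each summand of the disjoint union
\[
  G_{d_V} \, = \, \bigdotcup{\alpha \in \operatorname{im}(d_V)}{} G_{\geq\alpha} \bigquotient G_{>\alpha}
\]
to get $\dim(G_{d_V}) = \sum_\alpha \bigl[\dim(G_{\geq\alpha}) - \dim(G_{>\alpha})\bigr]$. Listing the values as $\alpha_0 < \alpha_1 < \dots < \alpha_k$, no value of $d_V$ lies strictly between consecutive $\alpha_i$, so $G_{>\alpha_i} = G_{\geq\alpha_{i+1}}$ for $i<k$, while $G_{>\alpha_k}$ is empty. The sum telescopes to $\dim(G_{\geq\alpha_0})$. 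Since $\alpha_0$ is minimal, $G_{\geq\alpha_0}$ contains every edge of $G$ and differs from $G$ at most by isolated vertices, which do not contribute to $\dim$; hence $\dim(G_{\geq\alpha_0}) = \dim(G)$. The same argument gives $\dim(H_{d_W}) = \dim(H)$, concluding the proof.

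Interestingly, the cycle-compatibility condition from \Cref{definition:bidistance} plays no role in this particular computation: all that is needed is that $d_V$ and $d_W$ are defined on every edge, which follows from the absence of self-loops via \Cref{definition:pairs}. There is no substantial obstacle; the only delicate point is the singleton-removal clause in the sublemma, which must be verified not to affect the dimension.
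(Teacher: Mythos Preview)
Your proof is correct and follows essentially the same approach as the paper: both arguments establish the additivity identity $\dim(\tilde G) = \dim(\tilde G') + \dim(\tilde G \quotient \tilde E')$ and then telescope over the values of~$d_V$. Your vertex-count derivation of the identity is a slight variant of the paper's component-by-component argument, and your explicit handling of isolated vertices and of the singleton-removal clause is a bit more careful than the paper's, but the structure and the key steps are the same.
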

\begin{proof}
We first prove that for any graph $G=(V,E)$ and for any subgraph 
$G' \subseteq G$ the following equation holds:
\[
 \dim(G) \, = \, \dim(G') + \dim(G \quotient G').
\]
Let $G = \dotcup_{i=1}^{k} G_i$ be the decomposition of~$G$ into connected 
components. 
Write $G'=\dotcup_{i=1}^k G'_i$, where $G'_i$ is the part of $G'$ belonging 
to~$G_i$. 
Let $V_i$ and $V'_i$ be the set of vertices of~$G_i$ and~$G'_i$, respectively. 
Now, $G_i'$ itself may be disconnected, so let $n_i$ be the number of connected 
components of~$G_i'$.
Contraction of edges of $G_i$ does not introduce new components,
thus $G_i\quotient G_i'$ consists of one connected component.
Moreover, each connected component of $G_i'$ will correspond to one vertex in 
$G_i\quotient G_i'$.
It follows that
\[
\dim(G_i)=|V_i|-1,\quad
\dim(G_i')=|V_i'|-n_i,\quad 
\dim(G_i/G_i')=(|V_i|-|V_i'|+n_i)-1, 
\]
and therefore $\dim(G_i)=\dim(G_i')+\dim(G_i/G_i')$ for all $i$. 
Now the claim follows, because $\dim\bigl(\dotcup_{i=1}^{k} 
G_i\bigr)=\sum_{i=1}^{k} 
\dim(G_i)$.
If $B = (G,H)$ is a bigraph and $d$ is a bidistance on it
then 
\[
\dim(G) = \dim(G_{d_V}) 
\quad\textrm{ and }\quad
\dim(H) = \dim(H_{d_W}).
\] 
We prove only the first equality, the second one follows analogously. 
Let $\overline{\alpha}$ be the minimum value attained by~$d_V$. 
Then $G = G_{\geq \overline{\alpha}}$ and
\[
\dim(G) \, = \, \dim(G_{\geq\overline{\alpha}}) \, = \, \dim(G_{> 
\overline{\alpha}}) + \dim(G_{\geq \overline{\alpha}} \quotient G_{> 
\overline{\alpha}}).
\]
By repeating this argument, considering one by one all values 
in~$\operatorname{im}(d_V)$ in increasing order, we prove the asserted 
equality. 
The proof is concluded by noticing that the number of 
biedges of~$B_d$ equals the number of biedges of~$B$ by construction.
\end{proof}

\begin{example}
\label{example:B_d}
 Continuing \Cref{example:Z_K}, we fix the following bidistance $d = 
(d_V, d_W)$:
\begin{align*}
 d_V(1,2) &= 0, & d_V(1,3) &= 1, & d_V(2,3) &= 0, & d_V(2,4) &= 1, & d_V(3,4) &= 
0,\\
 d_W(1,2) &= 1, & d_W(1,3) &= 0, & d_W(2,3) &= 0, & d_W(2,4) &= 0, & d_W(3,4) &= 
1. 
\end{align*}
The bidistance~$d$ is illustrated in \Cref{figure:bidistance} and
the resulting bigraph~$B_d$ is shown in \Cref{figure:quotient_bidistance}.
The scheme $Z^{B_d}_\C$ associated to~$B_d$ is defined by the following 
equations:
\begin{gather*}
\begin{aligned}
x_{13\mid24} &= 1, & x_{13\mid24}\tth y_{1\mid2}  &= \lambda_\colcA, & 
x_{2\mid4}\tth y_{12\mid34} &= \lambda_\colcD,\\
y_{12\mid34} &= 1, & x_{1\mid3}\tth y_{12\mid 34} &= \lambda_\colcB, & 
x_{13\mid24}\tth y_{3\mid 4} &= \lambda_\colcE,
\end{aligned} \\
x_{13\mid24}+x_{24\mid13} =
  x_{1\mid3}+x_{3\mid1} =
  x_{2\mid4}+x_{4\mid2} = 0, \\
y_{12\mid34}+y_{34\mid12} =
  y_{1\mid2}+y_{2\mid1} =
  y_{3\mid4}+y_{4\mid3} = 0.
\end{gather*}
\end{example}

\begin{figure}
	\begin{center}
		\begin{subfigure}[b]{0.35\textwidth}
			\centering
			\begin{tikzpicture}[scale=0.8]
				\begin{scope} 
					\lvertex (2) at (0,0) {2};
					\lvertex (3) at (2,0) {3};
					\lvertex (1) at (1,1.73205) {1};
					\lvertex (4) at (1,-1.73205) {4};

					\draw[edge1] (1) to node[lab1,left] {0} (2);
					\draw[edge2] (1) to node[lab2,right] {1} (3);
					\draw[edge3] (2) to node[lab3,above] {0} (3);
					\draw[edge4] (2) to node[lab4,left] {1} (4);
					\draw[edge5] (3) to node[lab5,right] {0} (4);
				\end{scope}
				\begin{scope}[xshift=3.8cm] 
					\lvertex (2) at (0,0) {2};
					\lvertex (3) at (2,0) {3};
					\lvertex (1) at (1,1.73205) {1};
					\lvertex (4) at (1,-1.73205) {4};

					\draw[edge1] (1) to node[lab1,left] {1} (2);
					\draw[edge2] (1) to node[lab2,right] {0} (3);
					\draw[edge3] (2) to node[lab3,above] {0} (3);
					\draw[edge4] (2) to node[lab4,left] {0} (4);
					\draw[edge5] (3) to node[lab5,right] {1} (4);
				\end{scope}
			\end{tikzpicture}
			\caption{A bigraph on which a bidistance has been fixed.}
			\label{figure:bidistance}
		\end{subfigure}
		\quad
		\begin{subfigure}[b]{0.6\textwidth}
			\begin{center}
				\begin{tabular}{cc@{\quad}|@{\quad}cc}
					\begin{tikzpicture}[scale=0.6]
						\begin{scope} 
							\lvertex (13) at (1,1.73205) {13};
							\lvertex (24) at (1,-1.73205) {24};
							
							\draw[edge1] (13) to [bend right=40] (24);
							\draw[edge3] (13) to (24);
							\draw[edge5] (13) to [bend left=40] (24);
						\end{scope}
					\end{tikzpicture}
					&
					\begin{tikzpicture}[scale=0.6]
						\begin{scope}
							\lvertex (2) at (0,0) {2};
							\lvertex (3) at (2,0) {3};
							\lvertex (1) at (1,1.73205) {1};
							\lvertex (4) at (1,-1.73205) {4};
							
							\draw[edge2] (1) to (3);
							\draw[edge4] (2) to (4);
						\end{scope}
					\end{tikzpicture}
					&
					\begin{tikzpicture}[scale=0.6]
						\begin{scope} 
								\lvertex (12) at (1,1.73205) {12};
								\lvertex (34) at (1,-1.73205) {34};
								
								\draw[edge2] (12) to [bend right=40] (34);
								\draw[edge3] (12) to (34);
								\draw[edge4] (12) to [bend left=40] (34);
						\end{scope}
					\end{tikzpicture}
					&
					\begin{tikzpicture}[scale=0.6]
						\begin{scope}
								\lvertex (2) at (0,0) {2};
								\lvertex (3) at (2,0) {3};
								\lvertex (1) at (1,1.73205) {1};
								\lvertex (4) at (1,-1.73205) {4};
								
								\draw[edge1] (1) to  (2);
								\draw[edge5] (3) to  (4);
						\end{scope}
					\end{tikzpicture}
					\\[6pt]
					$G_{\geq 0} \bigquotient G_{> 0}$&
					$G_{\geq 1} \bigquotient G_{> 1}$&
					$H_{\geq 0} \bigquotient H_{> 0}$&
					$H_{\geq 1} \bigquotient H_{> 1}$
				\end{tabular}
			\end{center}
			\caption{The bigraph $B_d$, where $B$ and $d$ are as in 
			\Cref{example:B_d}.}
			\label{figure:quotient_bidistance}
		\end{subfigure}
	\end{center}
	\caption{A bigraph $B$ with bidistance~$d$ and the corresponding bigraph 
$B_d$}
\end{figure}
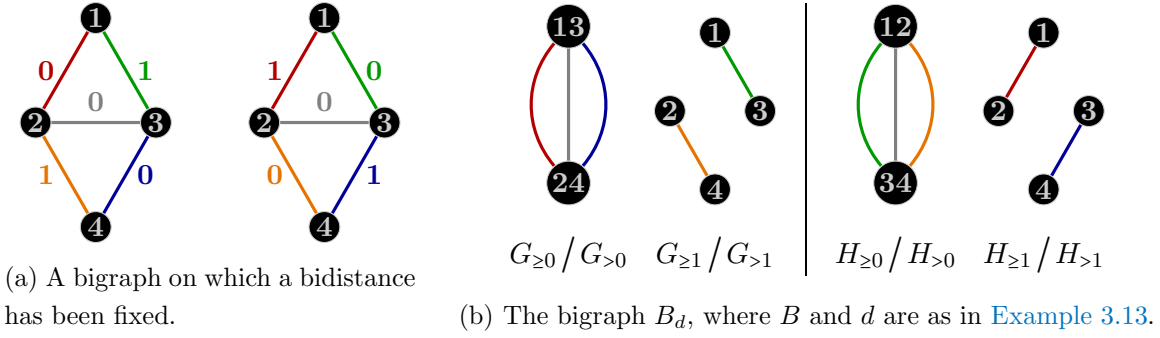

To link the points in a general fiber of~$f_{B, \K}$ that define a given 
bidistance~$d$ with the Laman number of~$B_d$, we introduce in 
\Cref{definition:family} a family 
of varieties~$\widetilde{A}_\C^d$, parametrized by a parameter~$\sigma$. This 
family has the property that a general element is isomorphic to~$Z^{B}_{\K}$, 
while a special element is isomorphic to~$Z^{B_d}_{\C}$.
To prove this, we establish in 
\Cref{lemma:induction,lemma:specialization}
the following two bijections:
\begin{gather*}
 \left\{
 \begin{array}{c}
   \text{points in } Z^{B}_{\K} \text{ that determine} \\
   \text{the bidistance } d
 \end{array}
 \right\} \underset{\text{\Cref{lemma:specialization}}}{\longleftrightarrow}
 \left\{
 \begin{array}{c}
   \text{points in } A^d_{\C}
 \end{array}
 \right\} \underset{\text{\Cref{lemma:induction}}}{\longleftrightarrow}
 \left\{
 \begin{array}{c}
   \text{points in } Z^{B_d}_{\C}
 \end{array}
 \right\} 
\end{gather*}

\begin{remark}
\label{remark:integer_values}
  Notice that, for a fixed bigraph~$B$ with biedges~$\edg$ and a fixed choice of 
a vector $\wt \in 
\Q^{\edg \setminus \{ \bar{e} \}}$ and of a bidistance $(d_V, d_W)$, we can 
suppose that all entries of the vector~$\wt$ and all values~$d_V(u,v)$ 
and~$d_W(t,w)$ are integers. Indeed, consider the subgroup of~$\Q$ 
that is generated by the rational numbers in
$\{ \wt(e) \}_{e \in \edg \setminus \{\bar{e} \}} \cup \operatorname{im}(d_V) 
\cup \operatorname{im}(d_W)$:
such a group is of 
the form $\frac{m}{n} \tth \Z$, and so we can apply the 
automorphism of~$\K$ that sends~$s$ to~$s^{n/m}$.
\end{remark}

\begin{definition}
\label{definition:family}
 Let $B$ be a bigraph with biedges~$\edg$ and use \Cref{assumption:general}. 
Given a bidistance~$d$ on~$B$, we can suppose by 
\Cref{remark:integer_values} that $\wt$, $d_V$ and $d_W$ 
take integer values. We define the scheme $\widetilde{A}^{d}_{\C}$ in $\C^{P} 
\times \C^{Q} 
\times \C$, with coordinates $( \tilde{x}_{uv} )_{(u,v)\in P}$,
$( \tilde{y}_{tw} )_{(t,w)\in Q}$ and~$\sigma$:
\[
 \widetilde{A}^{d}_{\C} \, := \, \mathrm{Spec} \!
 \left(\!
  \begin{array}{ll}
   \tilde{x}_{\bar{u} \bar{v}} = \tilde{y}_{\bar{t} \bar{w}} = 1
    & \bar{u}\prec \bar{v},\ \bar{t}\prec \bar{w} \\[2pt]
   \tilde{x}_{uv} \tth \tilde{y}_{tw} = \lambda_e
    &  \text{for all } e \in \edg \setminus \{ 
\bar{e} \},\ u\prec v,\ t\prec w \\[2pt]
   \sum_{\mscr{C}} \, \tilde{x}_{uv} \tth \sigma^{ d_V(u,v) - m(\mscr{C}) } = 
0 & \text{for all cycles } \mscr{C} \text{ in } G \! \\[2pt]
   \sum_{\mscr{D}} \, \tilde{y}_{tw}\tth \sigma^{d_W(t,w) - m(\mscr{D}) } = 
0 & \text{for all cycles } \mscr{D} \text{ in } H \!
  \end{array}
 \!\right), 
\]
where $m(\mscr{C})$ denotes the minimum value attained by the function~$d_V$ on 
the cycle~$\mscr{C}$, and similarly for~$d_W$. Since the differences
$d_V(u,v) - m(\mscr{C})$ and $d_W(t,w) - m(\mscr{D})$ are non-negative 
integers, we see that all equations are indeed polynomial in $\tilde{x}$, 
$\tilde{y}$ and~$\sigma$. Moreover, we define
\[
 A^{d}_{\C} \, := \, \widetilde{A}^{d}_{\C} \cap \{ \sigma = 0 \},
\]
where $\{ \sigma = 0 \}$ denotes the hyperplane defined by the equation $\sigma 
= 0$.
\end{definition}

\begin{example}
\label{example:A_C}
Continuing \Cref{example:B_d}, the scheme $\widetilde{A}^{d}_{\C}$ is
defined by the equations:
\begin{gather*}
\tilde{x}_{23} = \tilde{y}_{23}= 1,  \\
\begin{aligned}
\tilde{x}_{12}\tth \tilde{y}_{12} &= \lambda_\colcA,& 
\tilde{x}_{12}+\tilde{x}_{21}=
\tilde{x}_{13}+\tilde{x}_{31}=
\tilde{x}_{23}+\tilde{x}_{32}=
\tilde{x}_{24}+\tilde{x}_{42}=
\tilde{x}_{34}+\tilde{x}_{43}=0, 
\\
\tilde{x}_{13}\tth \tilde{y}_{13} &= \lambda_\colcB,& 
\tilde{y}_{12}+\tilde{y}_{21}=
\tilde{y}_{13}+\tilde{y}_{31}=
\tilde{y}_{23}+\tilde{y}_{32}=
\tilde{y}_{24}+\tilde{y}_{42}=
\tilde{y}_{34}+\tilde{y}_{43}=0, 
\\
\tilde{x}_{24}\tth \tilde{y}_{24} &= \lambda_\colcD,& 
\tilde{x}_{12}+\tilde{x}_{23}+\tilde{x}_{31}\sigma=
\tilde{y}_{12}\sigma+\tilde{y}_{23}+\tilde{y}_{31}=0, \\
\tilde{x}_{34}\tth \tilde{y}_{34} &= \lambda_\colcE,& 
\tilde{x}_{24}\sigma+\tilde{x}_{43}+\tilde{x}_{32}=
\tilde{y}_{24}+\tilde{y}_{43}\sigma+\tilde{y}_{32}=0. 
\end{aligned}
\end{gather*}
\end{example}

In \Cref{lemma:induction} we use a special set of generators for the 
ideals defining~$A^d_{\C}$ and~$Z^{B_d}_{\C}$. To describe this set of 
generators we need the concept of spanning forest for a bigraph.

\begin{definition}
\label{definition:spanning_forest}
 Let $B = (G,H)$ be a bigraph. A \emph{spanning forest}~$\mcal{F}$ for~$B$ is a 
pair $(\mcal{F}_G, \mcal{F}_H)$ of spanning forests for~$G$ and~$H$ 
respectively.
A spanning forest for a graph is a tuple of spanning trees, one for each 
connected component of the graph.
\end{definition}

An auxiliary result describing how to obtain a 
special system of generators for the ideal of~$Z^{B}_{\C}$, once a spanning 
forest for~$B$ is fixed is given in the next lemma.

\begin{lemma}
\label{lemma:equations_Z}
 Let $B$ be a bigraph. Use \Cref{assumption:general} and 
define~$Z^{B}_{\C}$ according to \Cref{definition:affine_sols}. Then 
$Z^{B}_{\C}$ is a complete intersection, and every choice of a spanning forest 
for~$B$ determines a set of $\operatorname{codim} \bigl( Z^{B}_{\C} \bigr)$ 
generators for the ideal of~$Z^{B}_{\C}$.
\end{lemma}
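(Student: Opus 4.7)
The plan is to produce, from the data of a spanning forest, an explicit list of polynomial equations whose number equals the codimension of~$Z^B_\C$, to verify that this list generates the defining ideal, and then to invoke Cohen-Macaulayness of the ambient affine space to conclude that $Z^B_\C$ is a complete intersection.

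By \Cref{assumption:general} combined with \Cref{lemma:passing_to_affine}, the scheme~$Z^B_\C$ is a reduced zero-dimensional scheme of cardinality~$\Lam(B)>0$, hence has codimension~$|P|+|Q|$ in $\C^P\times\C^Q$. Fix a spanning forest $\mcal{F}=(\mcal{F}_G,\mcal{F}_H)$ of~$B$. For each edge $\{u,v\}$ of~$\mcal{F}_G$ take the trivial cycle relation $x_{uv}+x_{vu}=0$; for each vertex-pair $\{u,v\}$ that is the vertex-set of a non-tree edge of~$G$ take both $x_{uv}+x_{vu}=0$ and the fundamental cycle relation obtained by closing the unique $\mcal{F}_G$-path from~$u$ to~$v$ by that non-tree edge. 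Since $|\mcal{F}_G|=\dim(G)$, a direct count gives exactly $|P|-\dim(G)$ linear equations in the $x$ variables; the analogous construction on~$H$ yields $|Q|-\dim(H)$ linear equations in the $y$ variables. Together with the two normalization equations $x_{\bar u\bar v}=y_{\bar t\bar w}=1$ and the $|\edg|-1$ multiplicative equations $x_{uv}\tth y_{tw}=\lambda_e$, the total count is
\[
 \bigl(|P|-\dim(G)\bigr)+\bigl(|Q|-\dim(H)\bigr)+2+(|\edg|-1) \, = \, |P|+|Q|,
\]
where the last equality uses the pseudo-Laman identity $\dim(G)+\dim(H)=|\edg|+1$.

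The central step is to show that these $|P|+|Q|$ equations generate the full defining ideal of~$Z^B_\C$ given in \Cref{definition:affine_sols}, not merely cut out the same zero locus. Since the cycle relations are linear in the $x$ (respectively~$y$) variables, the ideal they generate depends only on the linear subspace they span. A direct check identifies the common solution space in~$\C^P$ of all cycle relations of~$G$ with the image of the coboundary map $\delta\colon\C^V\to\C^P$, $\phi\mapsto\bigl(\phi(v)-\phi(u)\bigr)_{(u,v)\in P}$, which has dimension exactly~$\dim(G)$. Classical graph theory then asserts that the fundamental cycles with respect to any spanning forest, together with the trivial cycles~$(u,v,u)$, form a spanning set of the cycle space of~$G$; hence the relations I selected span the same $(|P|-\dim(G))$-dimensional space of linear forms as the full family of cycle relations. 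The symmetric statement for~$H$ completes the identification of ideals.

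Finally, the ambient ring $\C[(x_{uv})_{(u,v)\in P},\,(y_{tw})_{(t,w)\in Q}]$ is Cohen-Macaulay, and we have an ideal generated by $|P|+|Q|$ elements whose vanishing locus has codimension $|P|+|Q|$; by Krull's principal ideal theorem together with the height-equals-depth characterization of regular sequences in Cohen-Macaulay rings, these generators form a regular sequence, so $Z^B_\C$ is a complete intersection. I expect the main technical care to lie in the central step: making sure, in the presence of multi-edges of~$G$ or~$H$ (where $|P|$ can be strictly less than $2|\edg|$), that the spanning forest — which itself contains no multi-edges — still delivers enough fundamental plus trivial cycle relations to recover every cycle equation of the original definition.
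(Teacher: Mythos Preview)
Your proof is correct and follows the same overall scheme as the paper: choose a spanning forest, extract from it a short list of cycle relations, show these generate the same ideal as the full family of cycle relations, and count to get $|P|+|Q|$ equations, which together with zero-dimensionality forces a complete intersection.

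There are two small differences worth recording. Where the paper proves that the selected cycle equations generate all cycle equations by an induction on the number of edges of~$G$, you do it by identifying the common zero locus of the cycle relations with the image of the coboundary $\delta\colon\C^V\to\C^P$, reading off its dimension as~$\dim(G)$, and then invoking the classical fact that fundamental cycles together with the trivial cycles $(u,v,u)$ span the cycle space; this is cleaner and makes the count $|P|-\dim(G)$ transparent. Second, you are explicit about including the trivial relations $x_{uv}+x_{vu}=0$ for tree edges and about the Cohen--Macaulay step that turns ``$|P|+|Q|$ generators for a height-$(|P|+|Q|)$ ideal'' into ``regular sequence'', both of which the paper leaves implicit. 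Your worry about multi-edges dissolves once you read your two clauses as a partition of the unordered edge-pairs into tree-edge pairs and non-tree-edge pairs: a non-tree edge parallel to a tree edge has as fundamental cycle exactly the trivial cycle $x_{uv}+x_{vu}=0$ already listed, so no relation is lost and the count is unchanged.
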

\begin{proof}
 Notice that the dimension of the ambient affine space of~$Z^{B}_{\C}$ is 
$|P|+|Q|$, where $P$ and $Q$ are as in \Cref{definition:pairs}. 
Moreover $Z^{B}_{\C}$ is zero-dimensional, since $\Lam(B)$ is defined.
We are going to exhibit a system consisting
of $|P|+|Q|$ equations defining~$Z^{B}_{\C}$.

Let $\mcal{F} = (\mcal{F}_G, \mcal{F}_H)$ be a spanning forest for the 
bigraph~$B=(G,H)$ with biedges~$\edg$. For every $(u,v) \in P$, where $u$
and~$v$ are not connected by an edge of~$\mcal{F}_G$, we consider the 
equation
\[
 x_{uv} - \sum_{i=0}^{n-1} x_{u_{i} u_{i+1}} \, = \, 0,
\]
where $(u_0=u,\dots,u_n=v)$ is the unique path in~$\mcal{F}_G$ from~$u$ to~$v$. 
Similarly, we construct equations for each $(t,w) \in Q$ for which $t$ and~$w$ 
are not connected by an edge of~$\mcal{F}_H$. We claim that these 
equations generate the same ideal as the equations coming from all cycles 
in~$G$ 
and in~$H$. It is enough to show this for every connected component of~$G$, so 
we can suppose that $G$ is connected. We show the claim by induction on the 
number of edges of~$G$: when this number is minimal, the graph~$G$ is a tree 
and so there is nothing to prove since there are no cycles. Suppose now that 
the statement holds for~$G$, and add an edge to~$G$ obtaining~$G'$; suppose that
this edge connects the vertices~$u'$ and $v'$. Consider an equation 
$\sum_{\mscr{C}} \, x_{uv}$ coming from a cycle~$\mscr{C}$ in $G'$: if it does 
not involve the edge $\{ u', v'\}$, then by induction hypothesis it is a linear 
combination of the equations coming from the spanning tree. Otherwise, the 
cycle~$\mscr{C}$ is of the form $(u = u_0, \dotsc, u_i, u', v', u_{i+1}, 
\dotsc, u_n = u)$. If we add to the equation $\sum_{\mscr{C}} \, x_{uv}$ the 
equation $x_{u'v'} - \sum_{j=0}^{m-1} x_{v_{i} v_{j+1}}$, we obtain the 
equation 
induced by the cycle $(u = u_0, \dotsc, u_i, v_0, \dotsc, v_m, u_{i+1}, \dotsc, 
u_n = u)$, which is completely contained in~$G$. So by induction hypothesis the 
sum is a linear combination of the equations coming from the spanning tree; 
this concludes the proof of the claim.

The number of equations coming from edges not in the spanning forest is
\begin{align*}
 |P| - |\{ \text{edges of } \mcal{F}_G \}| + |Q| - |\{ 
\text{edges of 
} \mcal{F}_G \}| 
 &= |P| - \dim(G) + |Q| - \dim(H).
\end{align*}
The above equations, together with
\[
  \begin{cases}
   x_{\bar{u} \bar{v}} = y_{\bar{t} \bar{w}} = 1, & \bar{u}\prec\bar{v},\ 
\bar{t}\prec\bar{w}, \\
   x_{uv} \tth y_{tw} = \lambda_e, & \text{for all } e \in 
\edg \setminus \{ \bar{e} \}, \, u\prec v, \  t\prec w,
  \end{cases}
\]
define~$Z^{B}_{\C}$. Therefore, the total number of equations is
$
 |\edg| + 1 + \bigl( |P| - \dim(G) \bigr) + \bigl( |Q| - \dim(H) \bigr),
$
which equals $|P| + |Q|$ since $B$ is pseudo-Laman and has no self-loops by
\Cref{assumption:general}. In particular, $Z^{B}_{\C}$ is a complete 
intersection.
\end{proof}

\begin{remark}
\label{remark:spanning_forest}
  To proceed, we need spanning forests with an additional property:
  For a bigraph $B = (G,H)$,
  we consider spanning forests $\mcal{F}_G$ and $\mcal{F}_H$ for $G$ and~$H$, 
respectively, such that for
	any edge in~$G$ with vertices $u,v$, the value $d_V(u,v)$ is equal to the 
	minimum of the values of $d_V$ in the unique path in~$\mcal{F}_G$ 
connecting~$u$ and~$v$, and similarly for~$\mcal{F}_H$.

	The construction of such forests can 
	be achieved by iteratively removing non-bridges (see 
\Cref{definition:bridge}) with endpoints~$u$ 
and~$v$ such that $d_V(u,v)$ is minimal within the non-bridges of the graph in 
the current iteration.
	This construction can be proven to be correct using the loop invariant 
$\delta \colon (V\times V) \setminus \Delta \longrightarrow \Q$, where $\Delta 
= \{ (v,v) : v \in V\}$ and $\delta(u,v)$ is defined as follows. We 
consider all paths 
$v_0=u,v_1,\ldots,v_n=v$ from~$u$ to~$v$
	and take the minimum of the values $\bigl\{ d_V(v_i,v_{i+1}) \, : \, i \in 
\{0, \dotsc, n\} \bigr\}$ for each of them.
	Then $\delta(v,u)$ is the maximum of all these values.
	Note that if $u$ and $v$ are connected by an edge in the graph, then 
$\delta(u,v)=d_V(u,v)$: in fact, by assumption the path $(u,v)$ connects $u$ 
and $v$, and so $d_V(u,v)$ appears as a minimum of a path from~$u$ to~$v$; 
moreover, for every path $v_0=u,v_1,\ldots,v_n=v$ from~$u$ to~$v$ the minimum 
of 
the values $\bigl\{ d_V(v_i,v_{i+1}) \, : \, i \in 
\{0, \dotsc, n-1\} \bigr\}$ cannot be bigger than $d_V(u,v)$, because this 
would 
contradict the property of bidistances that the minimum in the cycle 
$v_0=u,v_1,\ldots,v_n=v, u$ occurs twice; hence $d_V(u,v)$ is the maximum of 
these minima, and so it coincides with $\delta(u,v)$.
	The map~$\delta$ is indeed a loop invariant, since if we are about to delete 
an edge with endpoints~$u$ and~$v$, then this edge has to be a non-bridge of 
minimal~$d_V(u,v)$.
	Hence, there is a cycle containing both~$u$ and~$v$ and the endpoints of 
another edge with the same $d_V$\hbox{-}value, since the minimum 
$d_V$\hbox{-}value occurs at least twice in every cycle.
	Therefore, there is still a path from~$u$ to~$v$ with the same minimum, so
that the set of minima in the definition of~$\delta$ does not change at all.
	In a similar way one argues that all other values of $\delta$ are  not 
changed 
either.

The forests constructed in this way share a useful property, namely if we 
consider the set of edges
in~$B_d$ that correspond to~$\mcal{F}_G$ and~$\mcal{F}_H$, then such a set forms 
a 
spanning 
forest for~$B_d$.
\end{remark}

\begin{lemma}
\label{lemma:induction}
 Let $B$ be a bigraph. 
 Use \Cref{assumption:general} and fix a bidistance~$d$ on~$B$.
 Suppose that $B_d$ (\Cref{definition:quotient_bigraph}) 
satisfies~$\Lam(B_d)>0$.
 Then the scheme~$\widetilde{A}_{\C}^{d}$ (\Cref{definition:family})
 can be defined by~$|P| + |Q|$ equations. 
Furthermore, the scheme~$A_{\C}^{d}$ is isomorphic to~$Z^{B_d}_{\C}$, so in 
particular it consists of~$\Lam(B_d)$ distinct points and is defined by 
$\operatorname{codim} \bigl( A_{\C}^{d} \bigr)$ equations.
\end{lemma}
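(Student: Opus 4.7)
The plan is to adapt the complete-intersection argument from \Cref{lemma:equations_Z} to the $\sigma$-deformed setting. The essential tool is a spanning forest $\mcal{F} = (\mcal{F}_G, \mcal{F}_H)$ of $B$ constructed as in \Cref{remark:spanning_forest}: along the unique path in $\mcal{F}_G$ (resp.\ $\mcal{F}_H$) between the endpoints of any edge of $G$ (resp.\ $H$), the minimum of $d_V$ (resp.\ $d_W$) equals the edge's own $d_V$\hbox{-}value (resp.\ $d_W$\hbox{-}value); by that same remark the images of $\mcal{F}_G$ and $\mcal{F}_H$ in~$B_d$ form a spanning forest for~$B_d$.

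For the first assertion, I would follow the proof of \Cref{lemma:equations_Z} step by step. For every pair $(u,v) \in P$ whose endpoints are not joined by an edge of $\mcal{F}_G$, consider the fundamental cycle $\mscr{C}_{uv}$ formed by $\{u,v\}$ together with the path $(u_0 = u, u_1, \dots, u_n = v)$ in $\mcal{F}_G$. The spanning-forest property gives $m(\mscr{C}_{uv}) = d_V(u,v)$, so the corresponding cycle equation of $\widetilde{A}_\C^d$ rearranges to
\[
 \tilde{x}_{uv} \, = \, \sum_{i=0}^{n-1} \tilde{x}_{u_i u_{i+1}} \, \sigma^{d_V(u_i, u_{i+1}) - d_V(u,v)},
\]
which is polynomial since each exponent is non-negative. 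By induction on the number of non-forest edges appearing in an arbitrary cycle, mirroring the induction in the proof of \Cref{lemma:equations_Z} while tracking $\sigma$\hbox{-}exponents, every cycle equation of $\widetilde{A}_\C^d$ lies in the ideal generated by these fundamental equations together with the trivial cycles $(u,v,u)$; the analogous statement holds for $H$. Adding the $|\edg|+1$ labeling and normalization equations yields
\[
 |\edg| + 1 + \bigl(|P| - \dim(G)\bigr) + \bigl(|Q| - \dim(H)\bigr) \, = \, |P| + |Q|
\]
defining equations by pseudo-Lamanity.

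For the isomorphism $A_\C^d \cong Z^{B_d}_\C$, I would specialize $\sigma = 0$ in the displayed equation above. Only summands with $d_V(u_i, u_{i+1}) = d_V(u,v)$ survive, and by the spanning-forest property these are precisely the edges along the unique path in the induced spanning forest of $B_d$ joining $[u]$ and $[v]$ inside $G_{\geq \alpha} \quotient G_{>\alpha}$, where $\alpha = d_V(u,v)$. Under the assignment $\tilde{x}_{uv} \mapsto x_{[u][v]}$ and $\tilde{y}_{tw} \mapsto y_{[t][w]}$, the specialized equations of $A_\C^d$ match exactly the spanning-forest equations produced by \Cref{lemma:equations_Z} applied to $B_d$. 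Well-definedness of this assignment -- i.e.\ that $\tilde{x}_{uv} = \tilde{x}_{u'v'}$ whenever $(u,v)$ and $(u',v')$ project to the same pair in $B_d$ -- is forced at $\sigma = 0$ by the cycle equation along a cycle linking $\{u,v\}$ and $\{u',v'\}$ through edges of $d_V$\hbox{-}value strictly greater than $\alpha$, which degenerates at $\sigma = 0$ to exactly the required identification. Since $Z^{B_d}_\C$ is by \Cref{lemma:equations_Z} a complete intersection, and consists of $\Lam(B_d)$ points via \Cref{lemma:passing_to_affine}, the same holds for $A_\C^d$.

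The main obstacle will be the inductive step in the first assertion. Unlike the purely linear cycle equations handled in \Cref{lemma:equations_Z}, ours carry $\sigma$\hbox{-}exponents, and one must verify that substituting a fundamental equation into an arbitrary cycle equation never introduces a negative power of~$\sigma$. The spanning-forest property of \Cref{remark:spanning_forest} is exactly what guarantees this, as it pins down $m(\mscr{C}_{uv}) = d_V(u,v)$ for every fundamental cycle, so all substitutions remain polynomial in~$\sigma$ and the induction goes through.
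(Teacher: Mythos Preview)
Your proposal is correct and follows essentially the same route as the paper. The paper likewise selects the spanning forest of \Cref{remark:spanning_forest}, writes down the same $|P|+|Q|$ equations for~$\widetilde{A}^d_\C$, and then establishes the isomorphism $A^d_\C \cong Z^{B_d}_\C$; the only cosmetic difference is that the paper packages the isomorphism as a pair of explicit point-maps $\varphi,\psi$ (with the same consistency check you give for $\tilde{x}_{uv}=\tilde{x}_{u'v'}$), whereas you phrase it as matching the two systems of spanning-forest equations.

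Two small remarks on precision. First, your ``main obstacle'' is slightly mislocated: when you substitute the fundamental equation for~$(u',v')$ into a cycle equation~$E_{\mscr{C}}$, the multiplier is $\sigma^{d_V(u',v')-m(\mscr{C})}$, which is non-negative simply because $(u',v')$ lies on~$\mscr{C}$; the spanning-forest property is what makes the fundamental equations themselves polynomial in~$\sigma$ (and guarantees $m(\mscr{C}')=m(\mscr{C})$ for the reduced cycle, so the induction hypothesis applies directly). Second, ``match exactly'' is a mild overstatement: there can be more fundamental equations on the $A^d_\C$ side than on the $Z^{B_d}_\C$ side when distinct pairs $(u,v)\in P$ collapse to the same pair in~$P_d$, but at $\sigma=0$ those extra equations become tautological under your assignment, so the conclusion is unaffected.
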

\begin{proof}
Let $B=(G,H)$ with biedges~$\edg$ as in the statement.
As in \Cref{lemma:equations_Z}, we can give a smallest set of equations for 
$\widetilde{A}^{d}_{\C}$
depending on a choice of a spanning forest. By a special choice of the spanning 
forest, namely by choosing
the forests $\mcal{F}_G$ and $\mcal{F}_H$ for $G$ and~$H$, respectively, as 
described in \Cref{remark:spanning_forest}, we may achieve that the 
equations are of the form
\[
 \tilde{x}_{uv} - \sum_{i=0}^{n-1} \tilde{x}_{u_{i} 
u_{i+1}} \sigma^{d_V(u_i, u_{i+1}) - d_V(u, v)} \, = \, 0
\]
together with 
\[
 \begin{cases}
  \tilde{x}_{\bar{u} \bar{v}} = \tilde{y}_{\bar{t} \bar{w}} = 1, & 
\bar{u}\prec\bar{v}, \  \bar{t}\prec\bar{w}, \\
  \tilde{x}_{uv} \tth \tilde{y}_{tw} = \lambda_e, & \text{for all } e \in 
\edg \setminus \{ \bar{e} \}, \, u\prec v, \  t\prec w. \\
 \end{cases}
\]
The number of these equations is hence $|P| + |Q|$. We obtain a set of 
equations 
for~$A^d_{\C}$ by setting $\sigma = 0$ in the previous ones. Note that we 
could not have obtained this kind of equations if we started from an 
arbitrary spanning forest for~$B$.

Let $P_d$ and~$Q_d$ be the sets as in 
\Cref{definition:pairs} starting from~$B_d$.
The elements of~$P_d$ are of the form $\bigl( [u]_{> \alpha}, [v]_{> 
\alpha} \bigr)$, where $[u]_{> \alpha}$ is the class of the vertex $u \in V$ 
in the set of vertices of $G_{\geq \alpha} \bigquotient G_{> \alpha}$, and 
$\alpha$ is a value in the image of~$d_V$. In the following we simply 
write~$[u]$ and~$[v]$ for such classes (and similarly for~$Q_d$).

We define two maps $\varphi \colon Z^{B_d}_{\C} \longrightarrow A^d_{\C}$ 
and $\psi \colon A^d_{\C} \longrightarrow Z^{B_d}_{\C}$ as follows. For a point 
$q = \bigl( ( x_{[u] [v]} )_{([u], [v]) \in P_d }, ( y_{[t] [w]} )_{([t], 
[w]) \in Q_d} \bigr)$ in~$Z^{B_d}_{\C}$, let~$\varphi(q)$ be the point 
whose $x_{uv}$\hbox{-}coordinate is~$x_{[u] [v]}$ and whose 
$y_{tw}$\hbox{-}coordinate is 
$y_{[t] [w]}$. For a point $\tilde{q} = \bigl( ( \tilde{x}_{uv} )_{(u,v) \in 
P}, ( \tilde{y}_{tw} )_{(t,w) \in Q} \bigr)$ in~$A^{d}_{\C}$, define 
$\psi(\tilde{q})$ to be the point whose $x_{[u] [v]}$\hbox{-}coordinate equals 
$\tilde{x}_{uv}$ and whose $y_{[t] [w]}$\hbox{-}coordinate 
equals~$\tilde{y}_{tw}$. We have to show that $\varphi$ and $\psi$ are 
well-defined. It is then a direct consequence of the definitions that they are 
isomorphisms.

To show that $\varphi$ 
is well-defined, we need to prove that $\varphi(q) \in A^{d}_{\C}$.
Notice that the coordinates of~$\varphi(q)$ satisfy the equations determined 
by the biedges of~$B$ because the coordinates of~$q$ do so by construction. 
Consider now an equation of~$A^d_{\C}$ obtained by setting $\sigma = 0$ in an 
equation of~$\widetilde{A}^{d}_{\C}$ determined by a cycle~$\mscr{C}$ in~$G$ 
(analogous considerations can be done for cycles in~$H$). Let $\alpha$ be the 
minimum value attained by~$d_V$ along the cycle~$\mscr{C}$.
Such an equation is of the form $\sum_{\mscr{C}_{\alpha}} \tilde{x}_{uv} = 0$, 
where the subscript in~$\mscr{C}_{\alpha}$ indicates that the sum is 
taken over the pairs~$(u,v)$ in~$P$ appearing in the cycle~$\mscr{C}$ and 
satisfying $d_V(u,v) = \alpha$. On the other hand, such a cycle determines
a cycle in~$G_{\geq \alpha} \bigquotient G_{> \alpha}$, which defines an
equation of the same form, namely $\sum_{\mscr{C}_{\alpha}} x_{[u][v]} = 0$, 
satisfied by the coordinates of~$q$. Hence $\varphi(q) \in A^{d}_{\C}$.

To show that $\psi$ is well-defined we need to first prove that if $[u]=[u']$ 
and $[v] = [v']$ for two pairs $(u,v), (u',v') \in P$ such that $d_V(u,v) = 
d_V(u',v')$, then the coordinates of the point $\tilde{q}$ satisfy 
$\tilde{x}_{uv} = \tilde{x}_{u' v'}$. This is true since by hypothesis there is 
a cycle in~$G$ involving two edges between~$u$ and~$v$ and~$u'$ and~$v'$, 
respectively, such that every other edge in the cycle has endpoints whose 
$d_V$\hbox{-}value is strictly greater than~$d_V(u,v)$. The definition
of~$\widetilde{A}^d_{\C}$ implies that such an equation holds for points 
in~$A^d_{\C}$. Secondly, we should prove that $\psi(\tilde{q}) \in 
Z^{B_d}_{\C}$, and here we argue as in the previous paragraph.
\end{proof}

The following result can be considered as a particular instance (in the 
zero-dimensional case) of the so-called \emph{tropical lifting lemma}, see 
\cite[Lemma 4.15]{Katz2009} and \cite{Payne2009} for a result over more general 
fields. We report here the proof for self-containedness and since it does not 
require results from tropical geometry, being essentially a consequence of the 
implicit function theorem for power series.

\begin{lemma}
\label{lemma:specialization}
 With the notation as in \Cref{lemma:induction}, so in particular 
a bidistance~$d$ is fixed, there is a bijection between~$A^{d}_{\C}$ and the 
set of points in~$Z^{B}_{\K}$ that determine the bidistance~$d$.
\end{lemma}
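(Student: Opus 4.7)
The plan is to construct mutually inverse maps between $A_{\C}^{d}$ and the subset of $Z_{\K}^{B}$ consisting of points that determine the bidistance~$d$, using the rescaling $x_{uv} = \tilde{x}_{uv} \cdot s^{d_V(u,v)}$ and $y_{tw} = \tilde{y}_{tw} \cdot s^{d_W(t,w)}$. By \Cref{remark:integer_values}, I may and do assume that $\wt$, $d_V$, and $d_W$ take integer values, so that $\C[[s]] \subseteq \K$. First I would verify that this rescaling translates the equations of $Z_{\K}^{B}$ into those defining $\widetilde{A}^{d}_{\C}$ specialized at $\sigma=s$: the normalizations $x_{\bar{u}\bar{v}}=y_{\bar{t}\bar{w}}=1$ become $\tilde{x}_{\bar{u}\bar{v}}=\tilde{y}_{\bar{t}\bar{w}}=1$ since $d_V(\bar{u},\bar{v})=d_W(\bar{t},\bar{w})=0$; the products $x_{uv}\tth y_{tw}=\lambda_e\tth s^{\wt(e)}$ become $\tilde{x}_{uv}\tth\tilde{y}_{tw}=\lambda_e$ by the bidistance relation $d_V(u,v)+d_W(t,w)=\wt(e)$; and each cycle equation $\sum_{\mscr{C}}x_{uv}=0$, after division by $s^{m(\mscr{C})}$, becomes $\sum_{\mscr{C}}\tilde{x}_{uv}\tth s^{d_V(u,v)-m(\mscr{C})}=0$, which is exactly an equation of $\widetilde{A}^{d}_{\C}$. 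The forward map is then obtained by passing to leading coefficients: for $q\in Z_{\K}^{B}$ with bidistance $d$, set $\tilde{x}_{uv}:=(x_{uv}/s^{d_V(u,v)})\bigr|_{s=0}$ and analogously for $\tilde{y}_{tw}$. Since $\nu(x_{uv})=d_V(u,v)$ by definition of bidistance, the evaluation is well-defined (and nonzero) and lands in $A_{\C}^{d}$.

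For the inverse, given $\tilde{q}_0\in A_{\C}^{d}$ I would invoke the implicit function theorem. By \Cref{lemma:induction}, $A_{\C}^{d}\cong Z^{B_d}_{\C}$ consists of $\Lam(B_d)$ distinct reduced points; combined with the fact, established in that same proof, that $\widetilde{A}^{d}_{\C}$ can be cut out by $|P|+|Q|$ equations in the $|P|+|Q|+1$ variables $(\tilde{x},\tilde{y},\sigma)$, the Jacobian with respect to $(\tilde{x},\tilde{y})$ is invertible at $\tilde{q}_0$. Hensel's lemma over $\C[[\sigma]]$ then produces a unique lift $\tilde{q}(\sigma)\in\widetilde{A}^{d}_{\C}\bigl(\C[[\sigma]]\bigr)$ of $\tilde{q}_0$. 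Substituting $\sigma=s$ and undoing the rescaling yields a $\K$-point of $Z_{\K}^{B}$ whose coordinate valuations are exactly $d_V$ and $d_W$, and hence determines the bidistance $d$.

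It remains to check that the two constructions are mutual inverses. Composing inverse then forward returns $\tilde{q}_0$ by the uniqueness statement in Hensel's lemma. For the opposite composition, I must verify that for any $q\in Z_{\K}^{B}$ with bidistance $d$ the rescaled coordinates $\tilde{x}_{uv},\tilde{y}_{tw}$ lie in $\C[[s]]$ and not merely in $\C\{\!\{s\}\!\}$ with fractional exponents; once this is established, $q$ coincides with the Hensel lift of its own specialization at $s=0$, closing the bijection. This rationality I would deduce by a Galois argument: the group $\mathrm{Gal}\bigl(\K/\C((s))\bigr)$ acts on $\K$-solutions of the integer-coefficient system via $s^{1/n}\mapsto\zeta\tth s^{1/n}$, preserving valuations, leading coefficients, and the defining equations, so by Hensel uniqueness every Galois conjugate of $q$ equals $q$; thus $q$ is defined over $\C((s))$, and the non-negative rescaled valuations force the rescaled coordinates into $\C[[s]]$.

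The main obstacle is justifying the application of Hensel's lemma, namely verifying that each point of $A_{\C}^{d}$ is a simple zero of the defining system of $\widetilde{A}^{d}_{\C}\cap\{\sigma=0\}$. This is ultimately a consequence of \Cref{lemma:induction} together with the correct equation count, but it is the one place where the hypothesis $\Lam(B_d)>0$ is used in an essential way; the rest of the argument is a bookkeeping translation between three systems of equations and a routine Galois descent to identify $\C[[s]]$-points among $\K$-points.
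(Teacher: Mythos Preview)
Your proof is correct and follows essentially the same strategy as the paper's: rescale by $s^{d}$ to pass between $Z^{B}_{\K}$ and $\widetilde{A}^{d}_{\C}$ at $\sigma=s$, take leading coefficients for the forward map, and invoke the implicit function theorem (Hensel's lemma) for the inverse and for uniqueness. The one place you diverge is in arguing that the rescaled coordinates of a $\K$-point actually lie in $\C[[s]]$: the paper simply applies an automorphism $s\mapsto s^{m/n}$ so that the rescaled coordinates become honest power series and then invokes uniqueness directly, whereas you use Galois descent. Your version works, but be careful that the ``Hensel uniqueness'' you invoke must be taken over $\C[[s^{1/n}]]$ for some $n$ clearing all denominators (or equivalently over the henselian valuation ring $\mathcal{O}_{\K}$), since a priori $q$ and its Galois conjugates are only $\mathcal{O}_{\K}$-points, not $\C[[s]]$-points; once you say this, the argument closes, and in fact the Galois step becomes superfluous because uniqueness over $\C[[s^{1/n}]]$ already identifies $\hat q$ with the $\C[[s]]$-lift.
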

\begin{proof}
 We know from \Cref{remark:passing_to_affine_puiseux} and from 
\Cref{lemma:induction} that both $A^{d}_{\C}$ and $Z^{B}_{\K}$ consist of 
finitely many points. Let $q \in Z^{B}_{\K}$ be a point 
determining the bidistance~$d$: this means that $q = \bigl( ( x_{uv} 
)_{(u,v)\in P}, ( y_{tw} )_{(t,w)\in Q} \bigr)$ with $\nu(x_{uv}) = 
d_V(u,v)$ and $\nu(y_{tw}) = d_W(t,w)$. We can write $x_{uv} = 
\hat{x}_{uv} \tth s^{d_V(u,v)}$ and $y_{tw} = \hat{y}_{tw} \tth s^{d_W(t,w)}$ 
where the elements~$\hat{x}_{uv}$ and~$\hat{y}_{tw}$ have zero valuation. 
Therefore $\hat{q} = \bigl( ( \hat{x}_{uv} )_{(u,v)\in P}, ( \hat{y}_{tw} 
)_{(t,w)\in Q} \bigr)$ is a point of~$s^{d} \cdot Z^{B}_{\K}$, which is the
scheme in~$\K^P \times \K^Q$ defined by the equations
\[
  \begin{cases}
   \hat{x}_{\bar{u} \bar{v}} = \hat{y}_{\bar{t} \bar{w}} = 1, &
		\bar{u}\prec \bar{v},\ \bar{t}\prec \bar{w}, \\
   \hat{x}_{uv} \tth \hat{y}_{tw} = \lambda_e, &
		\text{for all } e \in \edg \setminus \{ \bar{e} \},\
		u\prec v, \ t\prec w, \\
   \sum_{\mscr{C}} \, \hat{x}_{uv} \tth s^{d_V(u,v) - m(\mscr{C})} = 0, &
		\text{for all cycles } \mscr{C} \text{ in } G, \\
   \sum_{\mscr{D}} \, \hat{y}_{tw} \tth s^{d_W(t,w) - m(\mscr{D})} = 0, & 
		\text{for all cycles } \mscr{D} \text{ in } H,
  \end{cases}
\]
where the notation is as in \Cref{definition:family}. Since all 
coordinates of~$\hat{q}$ have valuation equal to zero, we can 
define $\tilde{x}_{uv} := \hat{x}_{uv} \mod (s)$, obtaining $\tilde{x}_{uv} 
\in \C$, and similarly for $\tilde{y}_{tw}$.
It follows that the point 
$\tilde{q} = \bigl( ( \tilde{x}_{uv} )_{(u,v)\in P}, ( \tilde{y}_{tw} 
)_{(t,w)\in Q} \bigr)$ satisfies the equations of~$A_\C^{d}$. In this way we 
obtain a map from the set of points in~$Z^{B}_{\K}$ that determine the 
bidistance~$d$ to~$A^{d}_{\C}$.

Let now $\tilde{q}$ be a point of~$A^{d}_{\C}$. From 
\Cref{lemma:induction} we know that $A^{d}_{\C}$ is a complete 
intersection and that it is defined by $\operatorname{codim} \bigl( A^{d}_{\C} 
\bigr)$ equations $g_i = 0$ of the form
\[
 g_i \bigl( ( x_{uv} )_{(u,v) \in P}, ( y_{tw} )_{(t,w) \in Q} \bigr) \, = \, 
\tilde{g}_i \bigl( ( x_{uv} )_{(u,v) \in P}, ( y_{tw} )_{(t,w) \in Q}, 0 \bigr),
\]
where the equations $\tilde{g}_i = 0$ define~$\widetilde{A}^{d}_{\C}$. Since 
$A^{d}_{\C}$ is smooth by \Cref{lemma:induction}, we know that the Jacobian 
determinant
\[
 \det \left. \left( \left\{ \frac{\partial g_i}{\partial \tilde{x}_{uv}} 
\right\}, \left\{ \frac{\partial g_i}{\partial \tilde{y}_{tw}} \right\} 
\right) \right|_{\tilde{q}} 
\]
evaluated at~$\tilde{q}$ is non-zero. By the implicit function theorem for
formal power series (see~\cite[A.IV.37, Corollary]{Bourbaki2003})
applied to the system of equations $\tilde{g}_i = 0$,
there exists a unique 
point $\hat{q} \in \C[\![\sigma]\!]^P \times \C[\![\sigma]\!]^Q$ such that 
$\tilde{g}_i(\hat{q}, \sigma) = 0$ and the constant terms of the 
coordinates of~$\hat{q}$ equal the coordinates of~$\tilde{q}$. The 
point~$\hat{q}$ determines in turn a point in~$s^{d} \cdot Z^{B}_{\K}$ whose 
coordinates have valuation equal to zero, and therefore a point in~$Z^{B}_{\K}$ 
whose coordinates have valuation prescribed by~$d$. We get a map 
from~$A^{d}_{\C}$ to the set of points in~$Z^{B}_{\K}$ determining the 
bidistance~$d$.

Suppose now that there were two points~$q$ and~$q'$ in~$Z^{B}_{\K}$ determining 
the bidistance~$d$ and specializing to the same point in~$A^{d}_{\C}$. After 
applying a suitable automorphism of~$\K$ of the form $s \mapsto s^{m/n}$, we 
can 
suppose that both the points in~$s^{d} \cdot Z^{B}_{\K}$ corresponding to~$q$ 
and~$q'$ were given by power series in~$s$. This would contradict the 
uniqueness 
of the power series solution provided by the implicit function theorem.
Therefore, the two maps we have just specified provide the desired bijection.
\end{proof}

\begin{remark}
\label{remark:zero_laman}
 If $B$ is a pseudo-Laman bigraph with $\Lam(B) = 0$ and $d$ is a bidistance 
on~$B$, then the definition of~$Z^{B}_{\K}$ makes still sense, as well as the 
definitions of~$\widetilde{A}^{d}_{\C}$, $A^{d}_{\C}$ and $Z^{B_{d}}_{\C}$. In 
this case, the scheme $Z^{B}_{\K}$ is nothing but the empty set, and the proof 
of \Cref{lemma:induction} shows that the schemes 
$A^{d}_{\C}$ and $Z^{B_d}_{\C}$ are isomorphic. To conclude that $Z^{B_d}_{\C}$ 
is also the empty set we argue as in \Cref{lemma:specialization}: 
if $Z^{B_d}_{\C}$ were not empty, then one could construct a 
point in~$Z^{B}_{\K}$, contradicting the hypothesis.
\end{remark}

\begin{theorem}
\label{theorem:algebraic_counting}
 Let $B$ be a pseudo-Laman bigraph with biedges~$\edg$ without self-loops.
Fix a biedge $\bar{e} \in \edg$, and 
fix $\wt \in \Q^{\edg \setminus \{ \bar{e} \}}$. Then we have
\[
 \Lam(B) \, = \, \sum_{d} \Lam(B_d),
\]
where $d$ runs over all bidistances on~$B$ compatible with~$\wt$.
\end{theorem}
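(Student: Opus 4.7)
The plan is to interpret $\Lam(B)$ as the cardinality of a concrete finite scheme and then partition that scheme according to the bidistance associated to each of its points. By \Cref{remark:finite_laman_number} we have $\Lam(B)\in\N$, so the sum on the right hand side will automatically reduce to a finite sum once we show that only bidistances actually realized by points of $Z^B_\K$ give a nonzero contribution.

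First, I would reduce the problem to counting points. If $\Lam(B)>0$, combine \Cref{remark:degree_extension}, \Cref{lemma:degree_fiber}, \Cref{remark:choice_fiber} and \Cref{remark:passing_to_affine_puiseux} (applied with the chosen $\bar e$ and $\wt$, and with general $(\lambda_e)_{e\in\edg\setminus\{\bar e\}}$) to identify $\Lam(B)$ with the number of closed points of $Z^B_\K$. Every such point determines, via \Cref{definition:distances}, a bidistance on $B$ compatible with $\wt$ in the sense of \Cref{definition:bidistance}, by \Cref{lemma:properties_distance}. This yields a set-theoretic partition
\[
Z^B_\K \;=\; \bigdotcup{d}{}\; \bigl\{\,q\in Z^B_\K \,:\, q \text{ determines } d\,\bigr\},
\]
where $d$ ranges over the bidistances on $B$ compatible with $\wt$.

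Next, I would apply the two structural lemmas of this section to each piece. By \Cref{lemma:specialization}, for each bidistance $d$ the set of points of $Z^B_\K$ determining $d$ is in bijection with $A^d_\C$. By \Cref{lemma:induction}, $A^d_\C$ is isomorphic to $Z^{B_d}_\C$ when $\Lam(B_d)>0$; and by \Cref{lemma:pseudo_laman_quotient} the quotient bigraph $B_d$ is pseudo-Laman, so $\Lam(B_d)\in\N$ and equals $|Z^{B_d}_\C|$ by \Cref{lemma:passing_to_affine}. In the case $\Lam(B_d)=0$, \Cref{remark:zero_laman} (adapted to this direction: if a point of $A^d_\C$ existed, it would lift to a point of $Z^B_\K$ realizing $d$, giving a point of $Z^{B_d}_\C$ by the same chain of isomorphisms) guarantees that the corresponding piece is empty. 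Summing cardinalities over $d$ therefore gives
\[
\Lam(B) \;=\; |Z^B_\K| \;=\; \sum_{d}\, \bigl|A^d_\C\bigr| \;=\; \sum_{d}\, \Lam(B_d),
\]
where only finitely many summands are nonzero because $Z^B_\K$ is finite.

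Finally, I would handle the degenerate case $\Lam(B)=0$. Here $f_B$ is not dominant, so $Z^B_\K=\emptyset$. Arguing exactly as in \Cref{remark:zero_laman}, for each bidistance $d$ the scheme $A^d_\C\cong Z^{B_d}_\C$ must be empty: a point of $Z^{B_d}_\C$ would, by the implicit function theorem step used in \Cref{lemma:specialization}, lift to a Puiseux-series point of $Z^B_\K$, a contradiction. Hence $\Lam(B_d)=0$ for all $d$ and both sides of the claimed identity vanish. The main obstacle I anticipate is bookkeeping: one has to check that the bijection of \Cref{lemma:specialization} is compatible, edge-by-edge, with the assignment of bidistances (so that points of $Z^B_\K$ with bidistance $d$ correspond precisely to points of $A^d_\C$, not of some other $A^{d'}_\C$); this is built into the construction but deserves an explicit remark when writing up the proof.
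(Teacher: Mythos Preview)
Your proposal is correct and follows essentially the same approach as the paper's own proof, which simply states that the case $\Lam(B)>0$ follows by combining \Cref{lemma:induction} and \Cref{lemma:specialization}, and that the case $\Lam(B)=0$ is covered by \Cref{remark:zero_laman}. You have merely expanded this one-line argument into its natural constituent steps (partitioning $Z^B_\K$ by bidistance, identifying each piece with $A^d_\C\cong Z^{B_d}_\C$, and summing), and your anticipated bookkeeping obstacle is already absorbed in the statement of \Cref{lemma:specialization}.
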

\begin{proof}
 When $\Lam(B) > 0$, the statement follows directly by combining 
\Cref{lemma:induction} and \Cref{lemma:specialization}.
The case $\Lam(B) = 0$ is covered by \Cref{remark:zero_laman}.
\end{proof}

\section{A formula for the Laman number}
\label{formulas}

In this section we develop a 
formula for the Laman number of a bigraph from 
\Cref{theorem:algebraic_counting}.
We fix a very special weight vector, namely the vector 
$(-1, \dotsc, -1)$: with this choice it is easy to determine which bidistances 
are 
compatible with~$\wt$ (\Cref{lemma:binary_distances}); the bigraphs~$B_d$ 
that one obtains are complicated, but it is possible to use this approach 
recursively (\Cref{theorem:laman_number}), translating any situation to 
a limited number of simple base cases 
(\Cref{proposition:base_cases}).
First we show in \Cref{lemma:integer_distances,lemma:binary_distances} that the 
bidistances that are compatible with 
$(-1, \dotsc, -1)$ can take only values in~$\{0,-1\}$.

\begin{lemma}
\label{lemma:integer_distances}
 Let $B$ be a pseudo-Laman bigraph with biedges~$\edg$. Suppose that ${\Lam(B) 
> 
0}$. 
Pick $\bar{e} \in \edg$ and fix $\wt \in \Z^{\edg \setminus \{ \bar{e} \} }$. 
Let $d = (d_V, d_W)$ be a bidistance for~$B$ and suppose that $\Lam(B_d) \in 
\N\setminus\{0\}$. Then both~$d_V$ and~$d_W$ take values in~$\Z$.
\end{lemma}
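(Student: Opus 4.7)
The plan is to establish the contrapositive: assume $d$ takes a non-integer value and show that $\Lam(B_d)\notin\N\setminus\{0\}$. Since $B_d$ is pseudo-Laman by \Cref{lemma:pseudo_laman_quotient}, by \Cref{lemma:induction} the assumption $\Lam(B_d)\in\N\setminus\{0\}$ would make $Z^{B_d}_\C$ a non-empty zero-dimensional scheme, so it is enough to produce a non-trivial $\C^*$-action on $Z^{B_d}_\C$, whose orbit through any point would then be an infinite curve inside this finite scheme. The key algebraic input is \Cref{lemma:properties_distance}: for every biedge $e\neq\bar e$ we have $d_V(u,v)+d_W(t,w)=\wt(e)\in\Z$, so the fractional parts $\{d_V(u,v)\}$ and $\{d_W(t,w)\}$ are complementary modulo~$\Z$. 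In particular, a non-integer value in $d_W$ forces a non-integer value in $d_V$ at the paired vertices, so I can pick a non-zero fractional part $\phi\in(0,1)\cap\Q$ that is realized by some value of~$d_V$.

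For $\eta\in\C^*$, I would define an automorphism of the ambient space of $Z^{B_d}_\C$ by scaling $x_{[u][v]}\mapsto\eta\cdot x_{[u][v]}$ whenever $\{d_V(u,v)\}=\phi$ and $y_{[t][w]}\mapsto\eta^{-1}\cdot y_{[t][w]}$ whenever $\{d_W(t,w)\}=1-\phi$, leaving all other coordinates fixed. I would then verify that this restricts to an action on $Z^{B_d}_\C$ by inspecting each kind of defining equation from \Cref{definition:affine_sols} applied to~$B_d$. The normalizations $x_{[\bar u][\bar v]}=1$ and $y_{[\bar t][\bar w]}=1$ are untouched because $d_V(\bar u,\bar v)=d_W(\bar t,\bar w)=0$ has trivial fractional part. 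Each cycle equation $\sum_{\mscr{C}} x_{[u][v]}=0$ is preserved because $\mscr{C}$ lives inside a single quotient $G_{\geq\alpha}/G_{>\alpha}$ whose edges all share the $d_V$-value $\alpha$, so the $x$-coordinates along the cycle scale by a common factor; an identical argument handles the cycles in $H_{d_W}$.

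The biedge equations are the main bookkeeping obstacle. For each biedge $e\neq\bar e$, the pair $(\{d_V(u,v)\},\{d_W(t,w)\})$ takes one of the shapes $(0,0)$, $(\phi,1-\phi)$, $(1-\phi,\phi)$, or $(\phi',1-\phi')$ with $\phi'\notin\{0,\phi,1-\phi\}$. A direct case-check shows $x_{[u][v]}\,y_{[t][w]}=\lambda_e$ is preserved in each case: in the second shape the factors $\eta$ and $\eta^{-1}$ cancel, and in all other shapes neither coordinate is scaled. The degenerate case $\phi=1/2$ merges the second and third shapes, but it is still handled correctly because both coordinates then lie at level $1/2$ and are scaled by inverse factors.

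Finally I check non-triviality: by the choice of $\phi$, there is some $(u_0,v_0)$ with $\{d_V(u_0,v_0)\}=\phi$, and the corresponding biedge equation $x_{[u_0][v_0]}\,y_{[t_0][w_0]}=\lambda_{e_0}$ with $\lambda_{e_0}\neq 0$ forces $x_{[u_0][v_0]}\neq 0$ at every point of $Z^{B_d}_\C$. Hence for any $q\in Z^{B_d}_\C$ the orbit $\{\eta\cdot q:\eta\in\C^*\}$ is a non-constant rational curve inside $Z^{B_d}_\C$, contradicting the zero-dimensionality granted by $\Lam(B_d)\in\N\setminus\{0\}$. This yields the desired contradiction and proves the lemma.
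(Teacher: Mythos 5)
Your proof is correct, but it takes a genuinely different route from the paper's. The paper argues at the level of bidistances: assuming some value of $d_V$ or $d_W$ is non-integral, it constructs the infinite family $d^{\kappa}_V=\kappa\, d_V+(1-\kappa)\lceil d_V\rceil$, $d^{\kappa}_W=\kappa\, d_W+(1-\kappa)\lfloor d_W\rfloor$ for $\kappa\in(0,1]\cap\Q$ (integrality of~$\wt$ ensures these are again bidistances), checks that all level sets, hence all quotients $B_{d^{\kappa}}=B_d$, are unchanged, and then invokes \Cref{lemma:induction} and \Cref{lemma:specialization} to conclude that $Z^{B}_{\K}$ would contain infinitely many points, contradicting the finiteness of~$\Lam(B)$. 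You instead localize the contradiction at $B_d$ itself: the complementarity of fractional parts forced by $d_V(u,v)+d_W(t,w)=\wt(e)\in\Z$ yields a $\C^{*}$-action on $Z^{B_d}_{\C}$ scaling the coordinates at fractional level $\phi$ by $\eta$ and those at level $1-\phi$ by $\eta^{-1}$, and your case analysis of the biedge, cycle, and normalization equations (including the degenerate case $\phi=1/2$) is complete; non-triviality follows since $\lambda_{e_0}\neq 0$ forces the scaled coordinate to be nonzero. This is essentially the scaling trick the paper uses in \Cref{lemma:two_bridges}, redeployed along the grading by fractional parts. Your version is more elementary and self-contained --- it avoids the tropical lifting of \Cref{lemma:specialization} and the bookkeeping needed to verify that each $d^{\kappa}$ is a bidistance --- at the cost of two small points worth making explicit: each coordinate of $Z^{B_d}_{\C}$ has a well-defined level because $G_{d_V}$ is a \emph{disjoint} union of the quotients $G_{\geq\alpha}\bigquotient G_{>\alpha}$ (so edges with distinct $d_V$-values land in distinct components), and the pair realizing~$\phi$ does not collapse to a self-loop in~$B_d$, which is guaranteed by the hypothesis $\Lam(B_d)>0$ via \Cref{proposition:base_cases}.
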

\begin{proof}
 Suppose by contradiction that the images of~$d_V$ and~$d_W$ 
are not contained in~$\Z$. We are going to construct an infinite family~$\{ 
d^{\kappa} \, : \, \kappa \in (0,1] \cap \Q \}$ of different bidistances 
for~$B$ that satisfies $B_{d^{\kappa}} = B_{d}$ for every $\kappa \in (0,1] 
\cap \Q$. \Cref{lemma:induction} together with 
\Cref{lemma:specialization} imply then that $Z^{B}_{\K}$ consists of 
infinitely many points, contradicting the hypothesis,
because every quotient $B_{d^\kappa}$ contributes nontrivially to $\Lam(B)$.
For every $\kappa \in (0,1] \cap \Q$, define
\begin{align*}
 d_V^{\kappa} & := \kappa \cdot d_V + (1 - \kappa) \cdot \left\lceil d_V%
\right\rceil, &
 d_W^{\kappa} & := \kappa \cdot d_W + (1 - \kappa) \cdot \left\lfloor d_W%
\right\rfloor,
\end{align*}
where $\left\lceil \cdot \right\rceil$ and $\left\lfloor \cdot \right\rfloor$ 
denote the ceiling and the floor functions, respectively. Since 
$\operatorname{im}(d_V) \cup \operatorname{im}(d_W) \not\subseteq \Z$, the
family
\[
 \bigl\{ d^{\kappa} = (d_V^{\kappa}, d_W^{\kappa}) \, : \, \kappa \in (0,1] 
\cap \Q \bigr\}
\]
has infinitely many elements.
We show that each~$d^{\kappa}$ is a bidistance for~$B$. Since by hypothesis 
\[
 d_V(u,v) + d_W(t,w) = \wt(e) \in \Z \quad \text{for all } e \in \edg \setminus 
\{ \bar{e} \},
\]
it follows that $\left\lceil d_V\right\rceil+\left\lfloor 
d_W\right\rfloor=d_V+d_W$. Hence, for all $\kappa\in (0,1]\cap\Q$
\[
 d_V^\kappa(u,v) + d_W^\kappa(t,w) = \wt(e)\quad \text{for all } e \in \edg 
\setminus\{ \bar{e} \}.
\]
By construction it 
follows that $d_V^{\kappa}(\bar{u}, \bar{v}) = d_W^{\kappa}(\bar{t}, 
\bar{w}) = 0$, where $\{\bar{u}, \bar{v}\} = \tau_G(\bar{e})$ and
$\{\bar{t}, \bar{w}\} = \tau_H(\bar{e})$.
Next, note that the two functions
\begin{align*}
 x & \longmapsto \kappa \cdot x + (1 - \kappa) \cdot \left\lceil x 
\right\rceil, &
 x & \longmapsto \kappa \cdot x + (1 - \kappa) \cdot \left\lfloor x 
\right\rfloor
\end{align*}
are strictly increasing for every $\kappa \in (0,1] \cap \Q$.
This implies that also the last property stated in 
\Cref{lemma:properties_distance}
is preserved. Hence, $d^\kappa$ is a bidistance.
Note that
\begin{align*}
  \bigl\{ (u,v)\in P \, : \, d_V(u,v)\geq \alpha \bigr\}= \bigl\{ (u,v)\in P \, 
: \, d_V^\kappa(u,v)\geq \kappa \alpha +(1-\kappa)\lceil \alpha \rceil \bigr\}
\end{align*}
and similarly for $>\alpha$ and for $d_W$.
Recall from \Cref{definition:quotient_bigraph} that $B_d$ is a disjoint union of 
graphs of the form $G_{\geq \alpha} \bigquotient G_{> \alpha}$ and $H_{\geq 
\beta} \bigquotient H_{> \beta}$.
By what we noticed, these graphs do not change when we pass from $d$ to 
$d^\kappa$ and therefore $B_{d^{\kappa}} = B_{d}$ for each~$\kappa$.
\end{proof}

\begin{lemma}
\label{lemma:binary_distances}
 Let $B$ be a pseudo-Laman bigraph with biedges~$\edg$. Suppose that $\Lam(B) > 
0$. Pick 
$\bar{e} \in \edg$ and fix $\wt = ( -1 )_{\edg \setminus \{ \bar{e} \} }$. Let 
$d 
= (d_V, d_W)$ be a bidistance for~$B$ and suppose that $\Lam(B_d) \in 
\N\setminus\{0\}$. Then both~$d_V$ and~$d_W$ take values in~$\{0,-1\}$.
\end{lemma}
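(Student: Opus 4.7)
The plan is to mimic the perturbation strategy used in the proof of \Cref{lemma:integer_distances}. By that lemma $d_V$ and $d_W$ already take integer values, and since $d_V(u,v)+d_W(t,w)=\wt(e)=-1$ on every biedge $e\in\edg\setminus\{\bar e\}$, exactly one of these two values is $\le -1$ and the other is $\ge 0$. Arguing by contradiction, I will assume that some value of $d_V$ or $d_W$ lies outside $\{-1,0\}$ and build an infinite family of pairwise distinct bidistances, all yielding the same quotient bigraph~$B_d$; combined with \Cref{lemma:induction,lemma:specialization} and the finiteness of $\Lam(B)$ recorded in \Cref{remark:finite_laman_number}, this will give the desired contradiction.

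For $\kappa\in(0,1]\cap\Q$, set
\[
  d_V^{\kappa} \, := \, \kappa\tth d_V + (1-\kappa)\tth(\rho\circ d_V),
  \qquad
  d_W^{\kappa} \, := \, \kappa\tth d_W + (1-\kappa)\tth(\rho\circ d_W),
\]
where $\rho\colon\Z\longrightarrow\{-1,0\}$ is the retraction defined by $\rho(n)=-1$ for $n\le -1$ and $\rho(n)=0$ for $n\ge 0$. I then verify the four conditions of \Cref{lemma:properties_distance}: symmetry is inherited from~$d$; the edge relation holds because the case split above gives $d_V+d_W=\rho(d_V)+\rho(d_W)=-1$ on every $e\neq\bar e$, whence $d_V^{\kappa}+d_W^{\kappa}=-1$; the normalization at $\bar e$ follows from $\rho(0)=0$; and the cycle condition is preserved because the piecewise-linear map $\phi_\kappa(n):=\kappa\tth n+(1-\kappa)\tth\rho(n)$ is strictly increasing on~$\Z$ (its two affine pieces are strictly increasing and $\phi_\kappa(-1)=-1<0=\phi_\kappa(0)$), so the minimum along any cycle of~$G$ or~$H$ is attained at exactly the same edges as for~$d_V$ or~$d_W$.

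Strict monotonicity of $\phi_\kappa$ also implies that the subgraphs $G_{\ge\alpha}$, $G_{>\alpha}$, $H_{\ge\beta}$, $H_{>\beta}$ appearing in \Cref{definition:quotient_bigraph} are unchanged when $d$ is replaced by~$d^\kappa$ (up to relabeling of the threshold by $\phi_\kappa$); hence $B_{d^{\kappa}}=B_d$, and therefore $\Lam(B_{d^{\kappa}})=\Lam(B_d)\ge 1$ for every $\kappa\in(0,1]$. Finally, if some value such as $d_V(u_0,v_0)=-k$ with $k\ge 2$ exists (the cases of a positive $d_V$-value or an out-of-range $d_W$-value being symmetric), then $d_V^{\kappa}(u_0,v_0)=(1-k)\kappa-1$ depends injectively on~$\kappa$, so the $d^{\kappa}$ are pairwise distinct; by \Cref{lemma:induction,lemma:specialization} each of them contributes at least $\Lam(B_d)\ge 1$ points of $Z^B_\K$ whose coordinate valuations match $d^{\kappa}$, and since those valuation profiles are pairwise different the corresponding subsets of $Z^B_\K$ are disjoint, forcing $|Z^B_\K|=\infty$ and contradicting $\Lam(B)<\infty$. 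The main technical point is the simultaneous check of the cycle condition and of $B_{d^\kappa}=B_d$, both of which reduce to the strict monotonicity of~$\phi_\kappa$; this is precisely where the specific choice $\wt=(-1,\dots,-1)$ becomes essential, since it is what makes the two values on each biedge land on opposite sides of the jump of~$\rho$, so that a single retraction~$\rho$ works consistently for both $d_V$ and~$d_W$.
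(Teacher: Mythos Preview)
Your proof is correct and follows the same overall strategy as the paper's: assume a value outside $\{-1,0\}$ exists, build an infinite family of bidistances $d^{\kappa}$ all satisfying $B_{d^{\kappa}}=B_d$, and derive a contradiction with the finiteness of $Z^B_\K$ via \Cref{lemma:induction,lemma:specialization}.

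The only difference is in the choice of perturbation. The paper isolates the minimum value $\bar\alpha=\min\operatorname{im}(d_V)\le -2$ and shifts just that level down by $\kappa\in\N$ (with the matching top level of $d_W$ shifted up), keeping everything integral. You instead interpolate toward the retraction $\rho\colon\Z\to\{-1,0\}$ via $\phi_\kappa(n)=\kappa n+(1-\kappa)\rho(n)$ for $\kappa\in(0,1]\cap\Q$, mirroring the ceiling/floor construction from \Cref{lemma:integer_distances}. Your route is slightly more uniform (a single monotone map handles all levels at once, and the edge relation $\rho(d_V)+\rho(d_W)=-1$ falls out of the $\wt=(-1,\dots,-1)$ assumption cleanly), while the paper's route keeps the perturbed bidistances integer-valued and touches only the extremal levels. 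Both reduce the verification of the cycle condition and of $B_{d^{\kappa}}=B_d$ to strict monotonicity of the perturbation map, so the content is the same.
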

\begin{proof}
 Suppose by contradiction that the claim does not hold. Then (after possibly 
swapping the roles of~$d_V$ and~$d_W$) we can suppose that $d_V(u,v) < -1$ for 
some~$u$ and~$v$ that are vertices of an edge. We construct an 
infinite 
family $\{d^{\kappa} \, : \, \kappa \in \N\}$ of bidistances for~$B$ such 
that $B_{d^{\kappa}} = B_{d}$. Then the same argument as in the proof of 
\Cref{lemma:integer_distances} gives a contradiction.

Let $\bar \alpha$ be the minimum of the values in $\operatorname{im}(d_V)$.
By \Cref{lemma:integer_distances} the value of $\bar \alpha$ is integer, so we 
have $\bar \alpha \leq 
-2$.
For any $\kappa \in \N$ define
\vspace{1ex}
\begin{align*}
 d_V^{\kappa}(u,v) & := 
  \begin{cases}
   d_V(u,v) - \kappa, & \text{if } d_V(u,v) = \bar \alpha, \\
   d_V(u,v), & \text{otherwise};
  \end{cases}\\[1ex]
 d_W^{\kappa}(t,w) & := 
   \begin{cases}
    d_W(t,w) + \kappa, & \text{if } d_W(t,w) = -1-\bar \alpha, \\
    d_W(t,w), & \text{otherwise}.
   \end{cases}
\end{align*}
\vskip 1ex
The family $\{d^{\kappa} = (d_V^{\kappa}, d_W^{\kappa}) \, : \, \kappa \in 
\N\}$ consists of infinitely many elements. From the construction it 
follows that each~$d^{\kappa}$ is a bidistance.
Furthermore,
\begin{align*}
  \bigl\{ (u,v)\in P \, : \, d_V(u,v)\geq \bar \alpha \bigr\} &= \bigl\{ 
(u,v)\in P \, : \, d_V^\kappa(u,v)\geq \bar \alpha - \kappa \bigr\}\\
  \bigl\{ (u,v)\in P \, : \, d_W(u,v)\geq -1-\bar \alpha \bigr\} &= \bigl\{ 
(u,v)\in P \, : \, d_W^\kappa(u,v)\geq -1-\bar \alpha + \kappa \bigr\}
\end{align*}
and by construction similar equalities hold for all other cases.
Here we use that $-1-\bar\alpha$ is the maximal value attained for~$d_W$.
Therefore, $B_{d^{\kappa}} = B_{d}$ by the same argument as in 
\Cref{lemma:integer_distances}.
Notice that if both~$d_V$ and~$d_W$ take values in $\{0,-1\}$, then the 
previous argument does not work. In fact in this case we have $\bar\alpha = 
-1$, and then the maps $d^\kappa$ are not bidistances anymore, since by 
construction we would have $d_W(\bar{t}, \bar{w}) = \kappa$, violating the 
prescription that $d_V(\bar{u}, \bar{v}) = d_W(\bar{t}, \bar{w}) = 0$ for the 
special biedge~$\bar{e}$.
\end{proof}

Using \Cref{proposition:splitting}, the special shape of the 
bidistances compatible with the weight vector $(-1, \dotsc, -1)$
allows to split the problem of computing the Laman number of a 
bigraph of the form~$B_d$ into the computation of the Laman numbers of two 
smaller bigraphs.

\begin{lemma}
\label{lemma:splitting_distances}
Let $B=(G,H)$ be a bigraph with biedges~$\edg$ and fix a biedge $\bar{e} \in 
\edg$.
Fix a bidistance $d = (d_V, d_W)$ such that $d_V$ and $d_W$ take values only in 
$\{-1,0\}$.
If $B$ is pseudo-Laman such that
\begin{itemize}
\item the bidistance $d$ is compatible with $\wt = ( -1 )_{\edg \setminus \{ 
\bar{e} \} }$,
\item $\bar{e}$ is neither a bridge in~$G$ nor a bridge in~$H$,
\item neither~$d_V$ nor~$d_W$ is the zero map, 
\end{itemize}
then the quotient bigraph~$B_d$ 
untangles via $\bar{e}\in\edg$
into bigraphs~$B_{d,1}$ and~$B_{d,2}$.
\end{lemma}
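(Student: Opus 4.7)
The plan is to directly exhibit, for the bigraph $B_d$, the decomposition required by Definition~\ref{definition:untangles}.

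First I would exploit the hypotheses on $(d_V,d_W)$. Since $d$ is a bidistance, Lemma~\ref{lemma:properties_distance} gives $d_V(\bar{u},\bar{v}) = 0$ and $d_W(\bar{t},\bar{w}) = 0$; combined with the hypothesis that $d_V, d_W$ take values in $\{-1,0\}$ and are not the zero map, this forces $\operatorname{im}(d_V) = \operatorname{im}(d_W) = \{-1,0\}$. Compatibility of $d$ with $\wt = (-1)_{\edg \setminus \{\bar{e}\}}$ means $d_V(e) + d_W(e) = -1$ for every $e \neq \bar{e}$, so exactly one of $d_V(e), d_W(e)$ equals $-1$ and the other equals $0$. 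Partitioning $\edg = \edg_1 \sqcup \edg_2 \sqcup \{\bar{e}\}$ with
\[
 \edg_1 := \{e \in \edg \setminus \{\bar{e}\} : d_V(e) = 0\} = \{e \in \edg : d_W(e) = -1\},
\]
\[
 \edg_2 := \{e \in \edg : d_V(e) = -1\} = \{e \in \edg \setminus \{\bar{e}\} : d_W(e) = 0\}
\]
then follows immediately.

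Next I would unpack $B_d$. Since $\operatorname{im}(d_V) = \{-1,0\}$, Definition~\ref{definition:quotient_bigraph} gives $G_{>0} = \emptyset$ and $G_{>-1} = G_{\geq 0}$, so
\[
 G_{d_V} \,=\, G_{\geq 0} \,\dotcup\, G/G_{\geq 0},
\]
and under the natural bijection between biedges and edges of $G_{d_V}$ the edges of $G_{\geq 0}$ correspond to $\edg_1 \cup \{\bar{e}\}$ while those of $G/G_{\geq 0}$ correspond to $\edg_2$. The analogous analysis on the $H$-side gives $H_{d_W} = (H/H_{\geq 0}) \dotcup H_{\geq 0}$, with biedge sets $\edg_1$ and $\edg_2 \cup \{\bar{e}\}$, respectively. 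Both components on each side carry at least one edge: the one containing $\bar{e}$ trivially, and the other because $\edg_2 \neq \emptyset$ and $\edg_1 \neq \emptyset$, which is exactly what the hypothesis ``neither $d_V$ nor $d_W$ is the zero map'' supplies, preventing either piece from disappearing after the singleton-removal step.

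Finally I would set $G_1' := G_{\geq 0}$, $G_2' := G/G_{\geq 0}$, $H_1' := H/H_{\geq 0}$, and $H_2' := H_{\geq 0}$. By construction these are pairs of disconnected subgraphs whose disjoint unions are $G_{d_V}$ and $H_{d_W}$, with biedge sets exactly $(\edg_1 \cup \{\bar{e}\}, \edg_2)$ and $(\edg_1, \edg_2 \cup \{\bar{e}\})$, which matches verbatim the shape required by Definition~\ref{definition:untangles}; this yields the untangling of $B_d$ via $\bar{e}$ into $B_{d,1} := (G_1' \subtract \{\bar{e}\}, H_1')$ and $B_{d,2} := (G_2', H_2' \subtract \{\bar{e}\})$. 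The only delicate point is keeping straight which biedge sits in which component on each side and lining up the two independent splittings of $G_{d_V}$ and $H_{d_W}$ via the compatibility condition; once $d_V$ and $d_W$ are restricted to $\{-1,0\}$ this bookkeeping is routine. Note that the non-bridge hypotheses on $\bar{e}$ in $G$ and $H$ and the pseudo-Laman assumption on $B$ play no direct role in producing the untangling itself; they are recorded in the lemma so that the resulting decomposition can subsequently be fed into Proposition~\ref{proposition:splitting}.
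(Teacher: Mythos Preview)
Your proof is correct and follows essentially the same route as the paper's: you define the same partition $\edg = \edg_1 \cup \edg_2 \cup \{\bar{e}\}$ via the values of $d_V$ and $d_W$, observe that the layers $G_{\geq 0}$ and $G/G_{\geq 0}$ (and their $H$-counterparts) are the components of $B_d$, and match this against Definition~\ref{definition:untangles}. Your write-up is in fact more explicit than the paper's (which is quite terse), and your closing remark that the pseudo-Laman and non-bridge hypotheses are not used in the untangling step itself is accurate.
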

\begin{proof} 
Recall from \Cref{definition:quotient_bigraph} that $B$ and $B_d$ 
have the same set of biedges.
We define two sets $\edg_1, \edg_2 \subseteq \edg$ as the biedges 
in~$B_d$ corresponding to the following sets of biedges in~$B$:
\vspace{1ex}
\begin{align*}
 \left\{ e \in \edg \, :  \, 
 \begin{array}{ll}
  d_V(u,v) = 0, & \text{ where } \{u,v\} = \tau_G(e) \text{ and} \\
  d_W(t,w) = -1, & \text{ where } \{t,w\} = \tau_H(e)
 \end{array}
 \right\}, \\[1ex]
 \left\{ e \in \edg \, :  \, 
 \begin{array}{ll}
  d_V(u,v) = -1, & \text{ where } \{u,v\} = \tau_G(e) \text{ and} \\
  d_W(t,w) = 0, & \text{ where } \{t,w\} = \tau_H(e)
 \end{array}
 \right\}.
\end{align*}
\vskip 1ex
By hypothesis we have that both~$\edg_1$ and~$\edg_2$ are non-empty, and that 
$\edg = \edg_1 \cup \edg_2 \cup \{ \bar{e} \}$ is a partition, because the maps 
$d_V$ and $d_W$ take values 
only in~$\{0,-1\}$. 
In $G_{d_V}$ and $H_{d_W}$, edges with different values of $d_V$ and $d_W$, 
respectively, are in different components.
Hence the statement is proved.
\end{proof}

In order to state the final formula we introduce some notation;
we then express the bigraphs obtained from 
\Cref{lemma:splitting_distances} in terms of this new notation.

\begin{definition}
\label{definition:quotient_bigraphs}
 Let $B = \bigl( G, H \bigr)$ be a bigraph, where $G = (V, \edg)$ and $H 
= (W, \edg)$.
Given $\mcal{M} \subseteq \edg$, we define two bigraphs 
$\leftquot{\mcal{M}}{B}= \bigl( G \quotient \mcal{M}, \, H \subtract \mcal{M} 
\bigr)$
and
$\rightquot{\mcal{M}}{B}= \bigl( G \subtract \mcal{M}, \, H \quotient \mcal{M} 
\bigr)$,
with the same set of 
biedges $\edg' = 
\edg \setminus \mcal{M}$.
For both constructions we fix a total order on the vertices of the resulting 
bigraphs.
\end{definition}

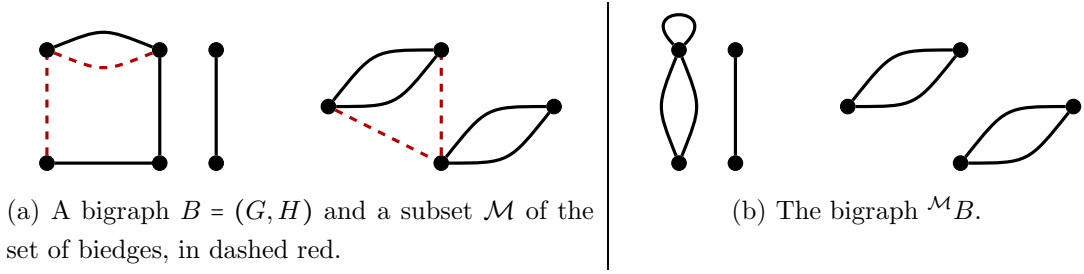
\begin{figure}
	\begin{center}
	\begin{tabular}{@{\extracolsep{\fill} } c|c}
		\begin{subfigure}[t]{0.5\textwidth}
				\begin{center}
				\begin{tikzpicture}[scale=1.5]
					\vertex (a) at (0,0) {};
					\vertex (b) at (1,0) {};
					\vertex (c) at (1,1) {};
					\vertex (d) at (0,1) {};
					\vertex (e) at (1.5,0) {};
					\vertex (f) at (1.5,1) {};

					\path
						(a) edge[edge1, dashed] (d)
						(a) edge[edge] (b)
						(b) edge[edge] (c)
						(e) edge[edge] (f)
					;
					\draw[edge1, dashed] (d) .. controls (0.5, 0.8) .. (c);
					\draw[edge] (d) .. controls (0.5, 1.2) .. (c);
				\end{tikzpicture}
				\qquad\quad
				\begin{tikzpicture}[scale=1.5]
					\vertex (a) at (0,0.5) {};
					\vertex (b) at (1,0) {};
					\vertex (c) at (1,1) {};
					\vertex (d) at (2,.5) {};

					\path
						(b) edge[edge1, dashed] (c)
						(a) edge[edge1, dashed] (b)
					;
					\draw[edge] (a) .. controls (0.4, 1) .. (c);
					\draw[edge] (a) .. controls (0.6, 0.5) .. (c);
					\draw[edge] (b) .. controls (1.4, 0.5) .. (d);
					\draw[edge] (b) .. controls (1.6, 0) .. (d);
				\end{tikzpicture}
				\end{center}
			\caption{A bigraph $B = (G, H)$ and a subset~$\mcal{M}$ of the set of 
				biedges, in dashed red.}
		\end{subfigure} &
		\begin{subfigure}[t]{0.4\textwidth}
				\begin{center}
				\begin{tikzpicture}[scale=1.5]
					\vertex (a) at (0,0) {};
					\vertex (b) at (0,1) {};
					\vertex (e) at (0.5,0) {};
					\vertex (f) at (0.5,1) {};

					\path
					(e) edge[edge] (f)
					;
					\draw[edge] (a) .. controls (-0.2, 0.5) .. (b);
					\draw[edge] (a) .. controls (0.2, 0.5) .. (b);
					\draw[edge] (b) .. controls (0.4, 1.4) and (-0.4, 1.4) .. (b);
				\end{tikzpicture}
				\qquad\quad
				\begin{tikzpicture}[scale=1.5]
					\vertex (a) at (0,0.5) {};
					\vertex (b) at (1,0) {};
					\vertex (c) at (1,1) {};
					\vertex (d) at (2,.5) {};

					\draw[edge] (a) .. controls (0.4, 1) .. (c);
					\draw[edge] (a) .. controls (0.6, 0.5) .. (c);
					\draw[edge] (b) .. controls (1.4, 0.5) .. (d);
					\draw[edge] (b) .. controls (1.6, 0) .. (d);
				\end{tikzpicture}
				\end{center}
			\caption{The bigraph ${}^{\mcal{M}} B$.}
		\end{subfigure}
	\end{tabular}
	\caption{Example of the construction in 
	\Cref{definition:quotient_bigraphs}.}
	\label{figure:quotient_bigraphs}
	\end{center}
\end{figure}

We can re-interpret \Cref{lemma:splitting_distances} in the light of 
\Cref{definition:quotient_bigraphs} by saying that if 
$d=(d_V,d_W)$ is a bidistance such that both $d_V$ and $d_W$ 
take values in $\{0,-1\}$, and $\mcal{M}$ and $\mcal{N}$ are defined as in 
\Cref{lemma:splitting_distances}, then $B_d$ untangles via $\bar e\in \edg$ 
into $\leftquot{\mcal M}{B}$ and $\rightquot{\mcal N}{B}$. This allows us 
to specialize \Cref{theorem:algebraic_counting} to a recursive formula.
By what we just said and by unraveling the notions introduced in 
\Cref{definition:quotient_bigraphs} and taking into account 
\Cref{lemma:splitting_distances} and \Cref{lemma:remove_edge} we get 
the following characterization.

\begin{proposition}
\label{proposition:characterization_bidistances}
 Let $B=(G,H)$ be a pseudo-Laman bigraph with biedges~$\edg$ without 
self-loops. Pick 
$\bar{e} \in \edg$, and fix $\wt = ( 
-1 )_{\edg \setminus \{ \bar{e} \} }$. Let $d = (d_V, d_W)$ be a bidistance 
for~$B$ such that both $d_V$ and $d_W$ take values 
in $\{0,-1\}$.
\begin{itemize}
 \item
  If $d_V$ is the zero map, then 
  $\Lam(B_d) = \Lam\left({}^{\{ \bar{e} \} } \! B\right)$.
 \item
  If $d_W$ is the zero map, then $\Lam(B_d) = \Lam\left(B^{\{ \bar{e} 
\}}\right)$.
 \item
  If neither $d_V$ nor $d_W$ is the zero map, and $\bar{e}$ is neither a bridge 
in~$G_{d_V}$ nor a bridge in~$H_{d_W}$, then 
$\Lam(B_d) = \Lam \left( \leftquot{\mcal{M}}{B} \right) 
\cdot \Lam \left( \rightquot{\mcal{N}}{B} \right)$, where $\mcal{M} \subseteq 
\edg$ is the set of biedges~$e$ such that $d_W$ is zero on~$\tau_H(e)$, and 
$\mcal{N} \subseteq \edg$ is the set of biedges~$e$ such that $d_V$ is zero 
on~$\tau_G(e)$.
\end{itemize}
\end{proposition}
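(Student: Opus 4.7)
The plan is a case analysis on the structure of the bidistance $d=(d_V,d_W)$, computing $B_d$ explicitly via \Cref{definition:quotient_bigraph} and then reducing $\Lam(B_d)$ with the tools from \Cref{bigraphs}. The bidistance relations $d_V(u,v)+d_W(t,w)=\wt(e)=-1$ for $e\neq\bar e$, together with $d_V(\bar u,\bar v)=d_W(\bar t,\bar w)=0$ and the hypothesis $d_V,d_W\in\{0,-1\}$, determine the pair $(d_V(e),d_W(e))$ on every biedge once one knows which of the three cases is in force.

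In the first case, $d_V\equiv 0$ forces $d_W(\bar e)=0$ and $d_W(e)=-1$ for every other biedge. A direct application of \Cref{definition:quotient_bigraph} then yields $G_{d_V}=G$, while $H_{d_W}$ is the disjoint union of a two-vertex component consisting solely of the edge $\bar e$ (the $\alpha=0$ stratum) and the graph $H\quotient\{\bar e\}$ (the $\alpha=-1$ stratum). Hence $\bar e$ is a bridge in $H_{d_W}$ but generically not in $G_{d_V}$, so \Cref{lemma:remove_edge} (applied with the roles of $G$ and $H$ interchanged, legitimate because the Laman number is invariant under swapping the two sides of a bigraph) deletes $\bar e$ from both graphs and unravelling \Cref{definition:quotient_bigraphs} identifies the outcome with $\leftquot{\{\bar e\}}{B}$. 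The second case is handled symmetrically, with the roles of $G$ and $H$ interchanged throughout.

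For the third case, \Cref{lemma:splitting_distances} exhibits an untangling of $B_d$ via $\bar e$ into two bigraphs $B_{d,1},B_{d,2}$ whose biedge sets are $\edg_1=\{e\neq\bar e:d_V(e)=0\}$ and $\edg_2=\{e\neq\bar e:d_W(e)=0\}$. Since $\bar e$ is, by hypothesis, a bridge in neither $G_{d_V}$ nor $H_{d_W}$, \Cref{proposition:splitting} applies and yields $\Lam(B_d)=\Lam(B_{d,1})\cdot\Lam(B_{d,2})$. Comparing the explicit description of the untangling in \Cref{definition:untangles} with \Cref{definition:quotient_bigraphs} then identifies $B_{d,1}$ with $\leftquot{\mcal M}{B}$ and $B_{d,2}$ with $\rightquot{\mcal N}{B}$.

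The main obstacle lies in this last matching step. The biedge set of $B_{d,i}$ is only $\edg_i$, whereas each of $\leftquot{\mcal M}{B}$ and $\rightquot{\mcal N}{B}$ still contains the extra biedge $\bar e$. The mismatch must be resolved by tracing how the layer-by-layer construction of $G_{d_V}$ and $H_{d_W}$ in \Cref{definition:quotient_bigraph} corresponds respectively to contraction and subtraction in \Cref{definition:quotient_bigraphs}, and by observing that in each of these two quotient bigraphs the residual copy of $\bar e$ appears as a bridge; a final appeal to \Cref{lemma:remove_edge} in each factor then equates the relevant Laman numbers.
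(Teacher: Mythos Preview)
Your overall shape follows the paper, but there are two genuine gaps.

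\textbf{The ``main obstacle'' in the third case is not an obstacle at all.} By the proposition's definition, $\mcal{M}$ consists of all biedges~$e$ with $d_W(\tau_H(e))=0$, and $\mcal{N}$ of all biedges with $d_V(\tau_G(e))=0$; since $d_V(\bar u,\bar v)=d_W(\bar t,\bar w)=0$, the biedge~$\bar e$ lies in \emph{both} $\mcal{M}$ and $\mcal{N}$. Hence $\leftquot{\mcal{M}}{B}$ has biedge set $\edg\setminus\mcal{M}=\edg_1$ and $\rightquot{\mcal{N}}{B}$ has biedge set $\edg\setminus\mcal{N}=\edg_2$, with no residual copy of~$\bar e$ in either. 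The matching with $B_{d,1}$ and $B_{d,2}$ is therefore direct: one checks from \Cref{definition:quotient_bigraph} that the $d_V=0$ stratum of $G_{d_V}$ is $G_{\geq 0}$ (a subgraph, not a quotient) and the $d_V=-1$ stratum is $G/\mcal{N}$, and similarly for $H_{d_W}$; plugging into \Cref{definition:untangles} gives the identification immediately. Your final paragraph, which proposes to strip off a bridge~$\bar e$ via \Cref{lemma:remove_edge} in each factor, is attacking an edge that is not there.

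\textbf{The first case is incomplete.} The phrase ``but generically not in $G_{d_V}$'' is not an argument: $G_{d_V}=G$ here, and for the given bigraph~$B$ the edge~$\bar e$ either is a bridge in~$G$ or it is not. When it is, \Cref{lemma:remove_edge} does not apply. The paper handles this sub-case separately: if $\bar e$ is a bridge in~$G$ then it is a bridge in both components of~$B_d$, so $\Lam(B_d)=0$ by \Cref{lemma:two_bridges}; one then checks by a dimension count that $\leftquot{\{\bar e\}}{B}$ fails to be pseudo-Laman, whence $\Lam\bigl(\leftquot{\{\bar e\}}{B}\bigr)=0$ as well. You also omit the verification, required for \Cref{proposition:splitting}, that $B_d$ has no self-loops (or that both sides vanish when it does); the paper treats this explicitly.
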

\begin{proof}
If $d_V$ is the zero map, then $G_{d_V}=G$ and $H_{d_W}$ is the disjoint union 
of $H\quotient \{ \bar{e} \}$ and a single edge corresponding to 
$\{ \bar{e} \}$. If $\bar{e}$ is not a bridge in~$G$, then it is not a 
bridge in $G_{d_V} = G$ either. So, by \Cref{lemma:remove_edge} we have the 
equality $\Lam(B_d) = \Lam\left({}^{\{ \bar{e} \} } \! B\right)$. Suppose now 
that $\bar e$ is a bridge in~$G$; then by \Cref{lemma:two_bridges} we 
have $\Lam(B_d)=0$. It is therefore enough to prove that $\Lam\left({}^{\{ 
\bar{e} \} } \! B\right)=0$. We show this by proving that ${}^{\{ \bar{e} \} } 
\! B$ is not pseudo-Laman. Indeed,
\[
 \dim \bigl( H \quotient \{ \bar{e} \} \bigr) + 
 \dim \bigl( \{ \bar{e} \} \bigr) = \dim(H),
\]
as shown in the proof of \Cref{lemma:pseudo_laman_quotient}. Since $\dim \bigl( 
\{ \bar{e} \} \bigr)=1$ we have
\begin{align*}
 \dim \bigl( H \quotient \{ \bar{e} \} \bigr) + \dim \bigl( G \subtract 
\{ \bar{e} \} \bigr) &\,=\, \dim(H) - 1 + \dim(G) + 1 
 \,=\, |\edg| + 1 \, \neq \, |\edg|.
\end{align*}
Here $\dim \bigl( G \subtract \{ \bar{e} \} \bigr) = \dim(G) + 1$ because 
removing a bridge increases the dimension by~$1$. This
concludes the first case; the second is proved analogously.

Suppose now that neither $d_V$ nor $d_W$ is the zero map, and $\bar{e}$ is 
neither a bridge in~$G_{d_V}$ nor a bridge in~$H_{d_W}$.
Then by \Cref{lemma:splitting_distances} $B_d$ 
untangles, and the two bigraphs $B_{d,1}$ and $B_{d,2}$ described in that lemma 
coincide with~$\leftquot{\mcal{M}}{B}$ and~$\rightquot{\mcal{N}}{B}$.
If $B_d$ does not contain self-loops then by \Cref{proposition:splitting} we 
have
$\Lam(B_d) = \Lam \left( \leftquot{\mcal{M}}{B} \right) 
\cdot \Lam \left( \rightquot{\mcal{N}}{B} \right)$.
If $B_d$ contains a self-loop which is different from $\bar e$,
then by construction it is also a self-loop in $\leftquot{\mcal{M}}{B}$ or
$\rightquot{\mcal{N}}{B}$.
Then by \Cref{proposition:base_cases} 
$\Lam \left( \leftquot{\mcal{M}}{B} \right) 
\cdot \Lam \left( \rightquot{\mcal{N}}{B} \right)=0=\Lam B_d$.
Note that $\bar e$ might never be a loop in $B_d$.
This is because in our case $B_d$ is
\begin{equation*}
  \left( G\quotient G_{>-1} \cupdot G_{\geq 0},\, H_{\geq 0} \cupdot H\quotient 
H_{>-1} \right)
\end{equation*}
and $\bar e$ only appears in $G_{\geq 0}$ and $H_{\geq 0}$.
\end{proof}

\begin{lemma}
\label{lemma:failure_bidistance}
Let $B=(G,H)$ be a pseudo-Laman bigraph with biedges~$\edg$ without 
self-loops. Pick $\bar{e} \in \edg$, 
and fix $\wt = ( -1 )_{\edg \setminus\{ \bar{e} \} }$.
Suppose that $d = (d_V, d_W)$ is a pair of functions
$d_V \colon P \longrightarrow \{0,-1\}$ and
$d_W \colon Q \longrightarrow \{0,-1\}$ that satisfy the first three
conditions of \Cref{lemma:properties_distance}, but not the last one.
Then $\leftquot{\mcal{M}}{B}$ or $\rightquot{\mcal{N}}{B}$ has a
self-loop, where the sets $\mcal{M}$ and $\mcal{N}$ are as in 
\Cref{proposition:characterization_bidistances}.
\end{lemma}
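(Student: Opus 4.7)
The plan is to analyse the failure of the fourth condition of \Cref{lemma:properties_distance} combinatorially and trace it to a self-loop in one of the two quotient bigraphs. By the symmetry of the statement between $G$ and $H$---swapping the two components of $B$ interchanges $\mcal{M}$ with $\mcal{N}$ and the bigraph $\leftquot{\mcal{M}}{B}$ with $\rightquot{\mcal{N}}{B}$---I may assume that the failure is witnessed on a cycle in $G$. Since $d_V$ takes only the values $0$ and $-1$, this means there is a cycle $\mscr{C} = (u_0, u_1, \ldots, u_n = u_0)$ in $G$ with exactly one edge $e_0$, connecting $u_0$ and $u_1$, at which $d_V$ equals $-1$; every other edge $e_1, \ldots, e_{n-1}$ of $\mscr{C}$ satisfies $d_V = 0$. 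The relation $d_V(\bar{e}) = 0$ forces $e_0 \neq \bar{e}$, so $e_0 \in \mcal{M} \setminus \mcal{N}$, whereas each $e_i$ for $i \geq 1$ lies in $\mcal{N}$ (and, if it equals $\bar{e}$, also in $\mcal{M}$).

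Next I locate the self-loop. The edges $e_1, \ldots, e_{n-1}$ trace a path in $G$ from $u_1$ back to $u_0$ lying entirely in the $d_V = 0$ part, so these endpoints become identified once we contract along biedges in $\mcal{N}$. This is the shadow of the fact that the biedge $e_0$ is a self-loop in the component $G_{\geq -1} \bigquotient G_{>-1}$ of $G_{d_V}$ inside the quotient bigraph $B_d$ of \Cref{definition:quotient_bigraph} (defined here thanks to \Cref{remark:quotient_nobidistance}, since $d$ need not be a bidistance). My plan is to transfer this self-loop to one of the two quotient bigraphs: either $e_0 \in \edg \setminus \mcal{N}$ becomes a self-loop in $H \bigquotient \mcal{N}$, meaning $\tau_H(e_0)$ lies in a common $\mcal{N}$-connected component of $H$; or else some $e_j$ with $j \geq 1$ and $e_j \neq \bar{e}$ (so $e_j \in \edg \setminus \mcal{M}$) becomes a self-loop in $G \bigquotient \mcal{M}$ because $\tau_G(e_j)$ lies in a single $\mcal{M}$-connected component of $G$.

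The hard part is to establish this dichotomy, since a priori the $H$-endpoints $\tau_H(e_i)$ of the cycle edges need not form a path in $H$; hence one cannot simply mirror the $G$-side argument. I would handle this by a case split on whether $\bar{e}$ belongs to $\mscr{C}$, combined with the pseudo-Laman identity $\dim(G) + \dim(H) = |\edg| + 1$ and the fact that $B$ has no self-loops. The intuition is that if the $\mcal{N}$-subgraph of $H$ were too sparse to provide the self-loop at $e_0$, then the $\mcal{M}$-subgraph of $G$ would have to compensate by identifying two consecutive vertices $u_j, u_{j+1}$ of $\mscr{C}$, thereby producing a self-loop at $e_j$ in $G \bigquotient \mcal{M}$. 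A careful bookkeeping along these lines---mirroring the untangling argument that underlies \Cref{lemma:splitting_distances}---should conclude the proof.
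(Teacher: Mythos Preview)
Your reduction is correct: by symmetry one may assume the failure is witnessed by a cycle $\mscr{C}$ in $G$ with a unique edge $e_0$ at $d_V=-1$, so $e_0\in\mcal{M}\setminus\{\bar e\}$ and the remaining edges of $\mscr{C}$ lie in $\mcal{N}$. You also correctly observe that $e_0$ is a self-loop in the layer $G_{\geq -1}\bigquotient G_{>-1}$ of $G_{d_V}$. But from this point your argument diverges from the paper and does not close. You set out to \emph{transfer} this self-loop, aiming for a dichotomy---either $e_0$ is a self-loop in $H\quotient\mcal{N}$, or some $e_j\in\mscr{C}$ with $j\geq1$ and $e_j\neq\bar e$ is a self-loop in $G\quotient\mcal{M}$---and then defer the actual work to ``careful bookkeeping'' using the pseudo-Laman identity and a case split on whether $\bar e\in\mscr{C}$. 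Neither ingredient does what you need: pseudo-Lamanity is a global dimension count that gives no control over which specific vertex pairs become identified under contraction, and the presence or absence of $\bar e$ on $\mscr{C}$ changes at most one $\mcal{M}$-edge along the cycle, which by itself does not force the endpoints of any particular $e_j$ into the same $\mcal{M}$-component of $G$. As written, the dichotomy is asserted but not proved.

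The paper's argument is shorter and sidesteps this transfer entirely. Having located the self-loop in $G_{\geq\alpha}\bigquotient G_{>\alpha}$, it simply notes that this layer is, by the way $B_d$ is constructed and then split (\Cref{definition:quotient_bigraph}, valid here via \Cref{remark:quotient_nobidistance}), a union of connected components of the graphs occurring in $\leftquot{\mcal{M}}{B}$ or $\rightquot{\mcal{N}}{B}$. Hence the self-loop already sits inside one of these two bigraphs, and no passage to the $H$-side, no dichotomy, and no appeal to pseudo-Lamanity is required.
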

\begin{proof}
By assumption, $d$ is not a bidistance, and it must happen that there exists a 
cycle 
in~$G$ or in~$H$ such that $d_V$ or $d_W$ attains its minimum only once. 
Let us suppose that there is a cycle~$\mscr{C}$ in~$G$ such that $d_V$ attains 
its minimum only on the pair~$(u,v)$, which is part of~$\mscr{C}$. If we 
set $\alpha = d_V(u,v)$, then we get that $G_{\geq \alpha} \bigquotient G_{> 
\alpha}$ has a self-loop. Since by definition $G_{\geq \alpha} 
\bigquotient G_{> \alpha}$ is a union of components of the graphs in either 
$\leftquot{\mcal{M}}{B}$ or $\rightquot{\mcal{N}}{B}$, the proof is completed.
\end{proof}

\Cref{proposition:base_cases} gives the two base cases for the 
computation of the Laman number of a bigraph: if the bigraph has a self-loop, 
then its Laman number is zero, and if the bigraph is constituted of two copies 
of a single edge, then its Laman number is one. They are going to be used in 
combination with the formula in \Cref{theorem:laman_number} to obtain a 
recursive algorithm. We are now able to state the formula for the computation of 
the Laman 
number of a bigraph. 

\begin{theorem}
\label{theorem:laman_number}
 Let $B=(G,H)$ be a pseudo-Laman bigraph with biedges~$\edg$ without 
self-loops. Let $\bar{e}$ be a fixed biedge, then 
\begin{equation}
\label{equation:laman_number}
  \Lam(B) = 
    \Lam \bigl( {}^{\{ \bar{e} \} } \! B \bigr) +  
    \Lam \bigl( B^{\{ \bar{e} \}} \bigr) + 
    \sum_{(\mcal{M}, \mcal{N})} 
    \Lam \bigl( \leftquot{\mcal{M}}{B} \bigr) \cdot 
    \Lam \bigl( \rightquot{\mcal{N}}{B} \bigr),
\end{equation}
where each pair $(\mcal{M}, \mcal{N}) \subseteq \edg^2$ satisfies:
\begin{itemize}
 \item 
  $\mcal{M} \cup \mcal{N} = \edg$;
 \item
  $\mcal{M} \cap \mcal{N} = \{ \bar{e} \}$;
 \item
  $|\mcal{M}| \geq 2$ and $|\mcal{N}| \geq 2$;
 \item
  both $\leftquot{\mcal{M}}{B}$ and $\rightquot{\mcal{N}}{B}$ are pseudo-Laman.
\end{itemize}
\end{theorem}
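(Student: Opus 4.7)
The plan is to specialize \Cref{theorem:algebraic_counting} to the uniform weight vector $\wt = (-1)_{e \in \edg \setminus \{\bar{e}\}}$, obtaining $\Lam(B) = \sum_d \Lam(B_d)$ with $d$ ranging over bidistances on $B$ compatible with $\wt$. Since each $B_d$ is pseudo-Laman by \Cref{lemma:pseudo_laman_quotient}, \Cref{remark:finite_laman_number} combined with the contrapositive of \Cref{lemma:binary_distances} collapses this sum to those bidistances whose components $d_V, d_W$ take values in $\{0,-1\}$; all others contribute zero.

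I then apply \Cref{proposition:characterization_bidistances} to partition such bidistances into three mutually exclusive cases. When $d_V \equiv 0$ or $d_W \equiv 0$, at most one bidistance occurs, producing the terms $\Lam({}^{\{\bar{e}\}} B)$ and $\Lam(B^{\{\bar{e}\}})$ of \Cref{equation:laman_number}. For the remaining case I attach to each bidistance $d$ the pair $\mcal{M} := \{e \in \edg : d_W(\tau_H(e)) = 0\}$ and $\mcal{N} := \{e \in \edg : d_V(\tau_G(e)) = 0\}$. The constraint $d_V + d_W = -1$ with values in $\{0,-1\}$ forces exactly one of the two to vanish on each non-$\bar{e}$ biedge, while both vanish on $\bar{e}$; hence $\mcal{M} \cup \mcal{N} = \edg$ and $\mcal{M} \cap \mcal{N} = \{\bar{e}\}$, and the non-vanishing hypothesis on $d_V, d_W$ gives $|\mcal{M}|, |\mcal{N}| \geq 2$. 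Whenever both quotients $\leftquot{\mcal{M}}{B}$ and $\rightquot{\mcal{N}}{B}$ are pseudo-Laman, the third bullet of \Cref{proposition:characterization_bidistances} supplies the product $\Lam(B_d) = \Lam(\leftquot{\mcal{M}}{B}) \cdot \Lam(\rightquot{\mcal{N}}{B})$.

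The main obstacle is reconciling the set of genuine bidistances with the set of admissible pairs indexing the sum in the theorem, and three boundary phenomena must be treated. First, a candidate pair $(\mcal{M}, \mcal{N})$ may fail to produce any bidistance because the cycle-minimum clause of \Cref{lemma:properties_distance} breaks; by \Cref{lemma:failure_bidistance} one of $\leftquot{\mcal{M}}{B}, \rightquot{\mcal{N}}{B}$ then carries a self-loop, so its Laman number vanishes by \Cref{proposition:base_cases} and including or excluding the term makes no difference. Second, when $d$ is a genuine bidistance but $\bar{e}$ is a bridge in $G_{d_V}$ or $H_{d_W}$, Lemmas~\ref{lemma:two_bridges} and~\ref{lemma:single_bridge} force $\Lam(B_d) = 0$, while \Cref{lemma:bridge_pseudo} applied to the untangling of $B_d$ shows that one of the two quotients is not pseudo-Laman, so such a pair is already excluded from the right-hand sum. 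Third, when both quotients are pseudo-Laman, the contrapositive of \Cref{lemma:bridge_pseudo} guarantees that $\bar{e}$ is not a bridge in either $G_{d_V}$ or $H_{d_W}$, legitimizing the application of \Cref{proposition:characterization_bidistances}(iii). Collecting the contributions from the three classes of bidistances then produces \Cref{equation:laman_number}.
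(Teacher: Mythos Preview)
Your proof is correct and follows essentially the same route as the paper's: specialize \Cref{theorem:algebraic_counting} to $\wt=(-1,\dots,-1)$, use \Cref{lemma:binary_distances} to restrict to $\{0,-1\}$-valued bidistances, apply \Cref{proposition:characterization_bidistances} to split into the three cases, and reconcile the bidistance side with the $(\mcal{M},\mcal{N})$ side via \Cref{lemma:failure_bidistance}, \Cref{lemma:two_bridges}, \Cref{lemma:single_bridge}, and \Cref{lemma:bridge_pseudo}. The only organizational difference is that the paper separates the argument into the cases $\Lam(B)>0$ and $\Lam(B)=0$ (because \Cref{lemma:binary_distances} carries the hypothesis $\Lam(B)>0$), whereas you treat both uniformly; your reading of the contrapositive is in fact strong enough to absorb the $\Lam(B)=0$ case, since then every $\Lam(B_d)$ already vanishes and the remaining matching arguments go through verbatim.
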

\begin{proof}
 From \Cref{theorem:algebraic_counting} we know that 
$
 \Lam(B) \, = \, \sum_{d} \Lam(B_d),
$
where $d$ runs over all bidistances on~$B$ compatible with~$\wt = ( -1 )_{\edg 
\setminus \{ \bar{e} \} }$. We distinguish two cases.

Suppose $\Lam(B)>0$. Let $d = (d_V, d_W)$ be a pair of 
functions as in \Cref{lemma:failure_bidistance}, and let $\mcal{M}, 
\mcal{N} \subseteq \edg$ be the two sets of biedges defined by~$d$. If $d$ is 
not a bidistance, then by \Cref{lemma:failure_bidistance} either 
$\leftquot{\mcal{M}}{B}$ or $\rightquot{\mcal{N}}{B}$ has a self-loop, and so 
by 
\Cref{proposition:base_cases} the contribution $\Lam \bigl( 
\leftquot{\mcal{M}}{B} \bigr) \cdot \Lam \bigl( \rightquot{\mcal{N}}{B} \bigr)$ 
is zero. If $d$ is a bidistance and $\bar{e}$ is neither a bridge in~$G_{d_V}$ 
nor a bridge in~$H_{d_W}$, then by 
\Cref{proposition:characterization_bidistances} the contribution of 
$\Lam(B_d)$ appears on the right-hand side of 
\Cref{equation:laman_number}. If instead $\bar{e}$ is a bridge 
in either~$G_{d_V}$ or in~$H_{d_W}$, then by \Cref{lemma:single_bridge} we 
conclude that $\Lam(B_d) = 0$; at the same time by 
\Cref{lemma:bridge_pseudo} either $\leftquot{\mcal{M}}{B}$ or 
$\rightquot{\mcal{N}}{B}$ is not pseudo-Laman so there is no contribution to 
the 
right-hand side of \Cref{equation:laman_number}.
A similar argument works in the case $\bar e$ is a bridge in both
$G_{d_V}$ and $H_{d_W}$ using \Cref{lemma:two_bridges}.

It remains to settle the case $\Lam(B)=0$. In this case, $\Lam(B_d)=0$ for all 
bidistances compatible with~$\wt$. We have to prove that the right hand side of 
\Cref{equation:laman_number} is zero, too. By 
\Cref{proposition:characterization_bidistances}, if $d_V$ is the zero 
map then $\Lam(B_d)=\Lam\bigl(\leftquot{\{\bar{e}\}}{B}\bigr)$. Hence, the 
first 
summand of the right-hand side of \Cref{equation:laman_number} is 
zero. For the second summand, the situation is similar. For the other summands, 
let us fix $\mcal M$ and $\mcal N$ as in the hypothesis. Define 
$d_V(\bar{u},\bar{v})=d_W(\bar{t},\bar{w})=0$, and define $d_V(u,v)=-1$ if 
there is an edge $e$ in $\mcal M$ such that $\tau_G(e)=\{u,v\}$, or 
$d_V(u,v)=0$ 
if there is no such edge; similarly for $d_W$. If $d=(d_V,d_W)$ is not a 
bidistance, then by \Cref{lemma:failure_bidistance} one of the bigraphs 
$\leftquot{\mcal{M}}{B}$ or $\rightquot{\mcal{N}}{B}$ has a self-loop and by 
\Cref{proposition:base_cases} the summand is zero. If $d=(d_V,d_W)$ 
is a bidistance, then the argument follows the same way as in the case of 
$\Lam(B)>0$. 
\end{proof}

\section{Computational results}
\label{computations}

\Cref{theorem:laman_number}, together with
\Cref{proposition:base_cases}, translates naturally
into a recursive algorithm, which has exponential complexity since it has to
loop over all subsets of~$\edg\setminus\{\bar{e}\}$. We have implemented this
algorithm~\cite{ElectronicMaterial} in the computer algebra system
Mathematica and in \verb!C++!. Despite its exponential runtime, it is a 
tremendous improvement
over the naive approach, which is to determine the number of solutions via a
Gr\"obner basis computation. For example, to compute the Laman number 880 of
the Laman graph with 10 vertices (see \Cref{figure:max_Laman_number}), our
recursive algorithm took 1.7s in Mathematica and 0.18s with \verb!C++!, while 
the
Gr\"obner basis approach took about 2353s in Mathematica and 45s
using the FGb library in Maple~\cite{FGb}.
Note also that the latter is feasible in practice only after replacing
the parameters $\lambda_e$ by random integers, which turns it into a
probabilistic algorithm. Moreover, for speed-up, we compute the Gr\"obner
basis only modulo a prime number so that the occurrence of large rational
numbers is avoided.
In contrast, our combinatorial algorithm computes the Laman number with 
certainty.
As a consistency check, we 
computed the Laman numbers of all 118,051 Laman graphs with at most 10 vertices,
using both approaches, and found that the results match perfectly.

For this purpose we generated lists of Laman graphs. In principle this is
a simple task, by applying the two Henneberg rules in all possible ways.  In
practice, it becomes demanding since one has to identify and eliminate
duplicates, which leads to the graph isomorphism problem. Using our
implementation we constructed all Laman graphs up to 12 vertices,
see \Cref{table:number_laman_graphs}.

\begin{table}
\caption{Number of Laman graphs with $n$ vertices; this sequence of numbers
is A227117 in the OEIS~\cite{Sloane}. There the sequence originally ended with
$n=8$, whose value was erroneously given as~609; we corrected and
complemented this OEIS entry accordingly.}
\begin{center}
\begin{tabular}{lccccccccccc}
\toprule
$n$ & 2 & 3 & 4 & 5 &  6 &  7 &   8 &    9 &     10 &      11 &       12 \\
\#  & 1 & 1 & 1 & 3 & 13 & 70 & 608 & 7222 & 110132 & 2039273 & 44176717 \\
\bottomrule
\end{tabular}
\end{center}
\label{table:number_laman_graphs}
\end{table}

\begin{table}
\caption{Minimal and maximal Laman number among all Laman graphs with $n$ 
vertices;
the minimum is $2^{n-2}$ and it is achieved, for example, on Laman graphs 
obtained by applying only the first Henneberg rule (see 
\Cref{theorem:laman}).}
\begin{center}
\begin{tabular}{lcccccccc}
\toprule
$n$
& 6
& 7
& 8
& 9
& 10
& 11
& 12
\\
min
& 16
& 32
& \phantom{1}64
& 128
& 256
& \phantom{2}512
& 1024

\\
max
& 24
& 56
& 136
& 344
& 880
& 2288
& 6180
\\
\bottomrule
\end{tabular}
\end{center}
\label{table:max_laman_numbers}
\end{table}

Recently, there has been large interest~\cite{Borcea2004, Emiris2009,
Emiris2013, Steffens2010, JacksonOwen2012} in the maximal Laman number that
a Laman graph with $n$ vertices can have. By applying our algorithm to all
Laman graphs with $n$ vertices, we determined the maximal Laman number for
$6\leq n\leq12$, which previously was only known for $n=6$ and $n=7$; the
results are given in \Cref{table:max_laman_numbers}. For $n=12$ this
was a quite demanding task: computing the Laman numbers of more than 44
million graphs with 12 vertices took 56 processor days using our fast \verb!C++!
implementation.

\appendix
\section{Proof of \Cref{theorem:laman}}
\label{section:prooflaman}

\begin{proof}[Proof of \Cref{theorem:laman}]
  $\ref{it:generically}\Longrightarrow \ref{it:laman}$: Assume that $G=(V,E)$ 
is 
	generically rigid. Then every 
	subgraph~$G'=(V',E')$ is generically realizable 
	(see \Cref{remark:generically_realizable}), 
	and so the map~$h_{G'}$ is dominant. 
	Therefore, the dimension of the codomain is bounded by the dimension of the 
	domain, which says $2|V'|-3 \ge |E'|$. The equality in the previous formula 
for 
	the whole graph~$G$ follows from \Cref{lemma:map_h}.

	$\ref{it:laman}\Longrightarrow \ref{it:henneberg}$: We prove the statement by 
	induction on the number of 
	vertices. The induction base with two vertices is clear. Assume that $G$ is a 
Laman graph with at 
	least~$3$ vertices. By \cite[Proposition 6.1]{Laman1970}, the graph~$G$ has a 
	vertex of degree~$2$ or~$3$. If $G$ has a vertex of degree~$2$, then the 
	subgraph~$G'$ obtained by removing this vertex and its two adjacent edges is 
a 
	Laman graph by \cite[Theorem~6.3]{Laman1970}. By induction hypothesis, $G'$ 
can 
	be constructed by Henneberg rules, and then $G$ can be constructed from 
	$G'$ by the first Henneberg rule. Assume now that $G$ has a vertex~$v$ of 
	degree~3. By \cite[Theorem~6.4]{Laman1970}, there are two vertices~$u$ 
	and~$w$ connected with~$v$ such that the graph~$G'$ obtained by removing~$v$ 
	and its three adjacent edges and then adding the edge $\{u,w\}$ is Laman. 
	By induction hypothesis, $G'$ can be constructed by Henneberg rules, and then 
	$G$ can be constructed from~$G'$ by the second Henneberg rule.

	$\ref{it:henneberg}\Longrightarrow \ref{it:generically}$:
	We prove the statement by induction on the number of Henneberg rules. The 
	induction base is the case of the one-edge graph, which is generically 
	rigid. By induction hypothesis we assume that $G=(V,E)$ is generically rigid. 
	Perform a Henneberg rule on~$G$ and let $G'$ be the result. We intend to show 
	that $G'$ is generically rigid, too.

	As far as the first Henneberg rule is concerned, we observe that for any 
	realization of~$G$, compatible with a general labeling~$\lambda$, and for any 
	labeling~$\lambda'$ extending~$\lambda$ we can always construct exactly two 
	realizations of~$G'$ that are compatible with~$\lambda'$.

	Let us now assume that $G'$ is constructed via the second Henneberg rule. 
	Call~$t$ the new vertex of~$G'$, and denote the 
	three vertices to which it is connected by~$u$, $v$ and~$w$. 
	Let $G''$ be the graph obtained by removing from~$G$ the same edge~$e$ that
	is removed in~$G'$. Without loss of generality we assume $e = \{u, v\}$. 
	\begin{center}
	\begin{tabular}{c@{\hspace{36pt}}c@{\hspace{36pt}}c}
	\begin{tikzpicture}
		\draw [dashed] (0,0) ellipse (1.2cm and 1.2cm);
		\vertex at (0,0.5) [label=above:$w$] {};
		\vertex (j) at (-0.5,-0.25) [label=below:$u$] {};
		\vertex (k) at (0.5,-0.25) [label=below:$v$] {};
		\path (j) edge[edge] (k);
	\end{tikzpicture}
	&
	\begin{tikzpicture}
		\draw [dashed] (0,0) ellipse (1.2cm and 1.2cm);
		\vertex at (0,0.5) [label=above:$w$] {};
		\vertex (j) at (-0.5,-0.25) [label=below:$u$] {};
		\vertex (k) at (0.5,-0.25) [label=below:$v$] {};
	\end{tikzpicture}
	&
	\begin{tikzpicture}
		\draw [dashed] (3,0) ellipse (1.2cm and 1.2cm);
		\vertex (ii) at (3,0.5) [label=above:$w$] {};
		\vertex (jj) at (2.5,-0.25) [label=below:$u$] {};
		\vertex (kk) at (3.5,-0.25) [label=below:$v$] {};
		\vertex (hh) at (4.5,0.5) [label=above:$t$] {};
		\path
		(ii) edge[edge] (hh)
		(jj) edge[edge] (hh)
		(kk) edge[edge] (hh)
		;
	\end{tikzpicture}
	\\
	$G$ & $G''$ & $G'$\hspace{12pt}
	\end{tabular}
	\end{center}
	We first show that $G'$ is generically realizable (see 
	\Cref{definition:generically_rigid}).
	Let $N \colon \C^2 
	\longrightarrow \C$ be the quadratic form corresponding to the 
	bilinear form $\left\langle \cdot, \cdot \right\rangle$.

	Fix a general labeling for~$G''$. We define the algebraic set $C \subseteq 
	\C^3$ as the set of all points $(a,b,c)$ such that there is a compatible 
	realization~$\rho$ of~$G''$ satisfying
	\[
	N \bigl( \rho(u) - \rho(v) \bigr) = a, \quad
	N \bigl( \rho(u) - \rho(w) \bigr) = c, \quad
	N \bigl( \rho(v) - \rho(w) \bigr) = b.
	\]
	For a general $a_0 \in \C$, there exist finitely many, up to equivalence, 
	points $(a_0,b,c)$ in~$C$, namely the ``lengths'' of the triangle $(u, v, w)$ 
that come from 
	the finitely many realizations of~$G$. It follows that $\dim(C) \geq 1$.

	A complex version of a classical result in distance geometry 
	(see \cite[Theorem~2.4]{Emiris2013}) states that four points $p_0$, $p_1$, 
	$p_2$, $p_3$ $\in \C^2$ fulfill
	\begin{align*}
	N(p_0-p_1) &= x, &
	N(p_0-p_2) &= y, &
	N(p_0-p_3) &= z, &
	\\
	N(p_1-p_2) &= a, &
	N(p_2-p_3) &= b, &
	N(p_1-p_3) &= c, &
	\end{align*}
	if and only if the following Cayley-Menger determinant
	\[
	F(a,b,c,x,y,z):=
	\det
	\begingroup
	\everymath{\scriptstyle}
	\renewcommand*{\arraystretch}{0.6}
	\arraycolsep=3pt
	\begin{bmatrix}
		0 & a & c & x & 1 \\
		a & 0 & b & y & 1 \\
		c & b & 0 & z & 1 \\
		x & y & z & 0 & 1 \\
		1 & 1 & 1 & 1 & 0 \\
	\end{bmatrix}
	\endgroup
	\]
	vanishes. We define
	\[
	U := \bigcup_{p=(a,b,c)\in C}S_p\,,
	\textrm{~~where~~}
	S_p := \bigl\{ (x,y,z)\in\C^3 \,\colon\, F(x,y,z,a,b,c)=0 \bigr\}
	\] 
	and
	\[
	\begin{array}{l@{~}l@{~}l@{~}l@{~}l@{~}l@{~}l@{~}l@{~}l@{~}l@{~}l@{~}l}
	e_p :=( & abc   & : & a(a-b-c) & : & b(b-a-c) & : & c(c-a-b) & : & a-b-c  
&:&\\
					& b-a-c & : & c-a-b    & : & a        & : & b        & : & c      
	&)&\in\p_\C^9. 
	\end{array}
	\]
	The point $e_p \in \p_\C^9$ with $p = (a,b,c)$ has coordinates given by the 
	coefficients of $F(a,b,c,x,y,z)$, considered as a polynomial in~$x$, $y$ 
	and~$z$. Because of this, the point $e_p$ determines~$S_p$ uniquely 
	as a surface. The function $\C^3 \setminus\{0\} \longrightarrow \p_\C^9$ 
	sending $p \mapsto e_p$ is injective, and hence the family 
$\left(S_p\right)_{p 
	\in C}$ of surfaces is not constant. It follows that the algebraic set~$U$ 
has 
	dimension~$3$, and thus a general point $(x,y,z) \in \C^3$ lies in~$U$. If we 
	extend the general labeling of~$G''$ by assigning a general triple~$(x,y,z) 
\in 
	U$ as labels to the three new edges, then we get at least one realization 
	of~$G'$. 
	It follows that $G'$ is generically realizable.

	Since $|V'|=|V|+1$ and $|E'|=|E|+2$, it follows that $2|V|=|E'|+3$. Since, as 
	we have just shown, the map~$h_{G'}$ is dominant, the graph~$G'$ is 
generically 
	rigid by \Cref{lemma:map_h}.
\end{proof}

\section{Laman graphs with maximal Laman number}

\begin{figure}[H]
\renewcommand{\vertex}{\node[svertex]}
\begin{center}
\begin{tabular}{c@{\hspace{16pt}}c@{\hspace{16pt}}c@{\hspace{16pt}}c}
 \begin{tikzpicture}[scale=0.06] 
 \draw[white] (-21,22) rectangle (22,-21);
 \vertex (a) at (-15.,-18.4482) {};
 \vertex (b) at (15,-18.4482) {};
 \vertex (c) at (0.,-6.78132) {};
 \vertex (d) at (-15.,18.4482) {};
 \vertex (e) at (15.,18.4482) {};
 \vertex (f) at (0.,6.78132) {};
 \draw[sedge] (a)edge(b) (a)edge(c) (a)edge(d) (b)edge(c) (b)edge(e) (c)edge(f) 
(d)edge(e) (d)edge(f) (e)edge(f);
 \end{tikzpicture}
 &
 \begin{tikzpicture}[scale=0.06] 
 \draw[white] (-21,22) rectangle (22,-21);
 \vertex (a) at (0.00,-17.00) {};
 \vertex (b) at (20.00,0) {};
 \vertex (c) at (7.50,0) {};
 \vertex (d) at (-20.00,0) {};
 \vertex (e) at (-7.50,0) {};
 \vertex (f) at (0.00,19.00) {};
 \vertex (g) at (0.00,7.24) {};
 \draw[sedge] (a)edge(b) (a)edge(c) (a)edge(d) (a)edge(e) (b)edge(c)
   (b)edge(f) (c)edge(g) (d)edge(e) (d)edge(f) (e)edge(g) (f)edge(g);
 \end{tikzpicture}
 &
 \begin{tikzpicture}[scale=0.06] 
 \draw[white] (-21,22) rectangle (22,-21);
 \vertex (a) at (-0.00,-17.00) {};
 \vertex (b) at (-0.00,7.24) {};
 \vertex (c) at (7.50,0.00) {};
 \vertex (d) at (-0.00,-7.24) {};
 \vertex (e) at (20.00,0.00) {};
 \vertex (f) at (-20.00,0.00) {};
 \vertex (g) at (-7.50,0.00) {};
 \vertex (h) at (-0.00,19.00) {};
 \draw[sedge] (a)edge(c) (a)edge(d) (a)edge(e) (a)edge(f) (b)edge(c)
   (b)edge(d) (b)edge(g) (b)edge(h) (c)edge(e) (d)edge(g) (e)edge(h)
   (f)edge(g) (f)edge(h);
 \end{tikzpicture}
 &
 \begin{tikzpicture}[scale=0.06] 
 \draw[white] (-21,22) rectangle (22,-21);
 \vertex (a) at (21.00,0.00) {};
 \vertex (b) at (-5.10,18.00) {};
 \vertex (c) at (0.17,-7.70) {};
 \vertex (d) at (7.90,-4.80) {};
 \vertex (e) at (-5.10,-18.00) {};
 \vertex (f) at (0.17,7.70) {};
 \vertex (g) at (-8.20,0.00) {};
 \vertex (h) at (7.90,4.80) {};
 \vertex (i) at (-19.00,0.00) {};
 \draw[sedge] (a)edge(b) (a)edge(d) (a)edge(e) (a)edge(h) (b)edge(f)
   (b)edge(g) (b)edge(i) (c)edge(d) (c)edge(e) (c)edge(f) (c)edge(g)
   (d)edge(h) (e)edge(i) (f)edge(h) (g)edge(i);
 \end{tikzpicture}
\end{tabular}\\[16pt]
\begin{tabular}{c@{\hspace{9pt}}c@{\hspace{9pt}}c}
 \begin{tikzpicture}[scale=0.09] 
 \draw[white] (-21,22) rectangle (22,-21);
 \vertex (a) at (-20.00,-16.00) {};
 \vertex (b) at (20.00,-16.00) {};
 \vertex (c) at (-0.00,-2.30) {};
 \vertex (d) at (-0.00,13.00) {};
 \vertex (e) at (-9.10,-0.20) {};
 \vertex (f) at (9.10,-0.20) {};
 \vertex (g) at (-6.00,-11.00) {};
 \vertex (h) at (-12.00,21.00) {};
 \vertex (i) at (6.00,-11.00) {};
 \vertex (j) at (12.00,21.00) {};
 \draw[sedge] (a)edge(b) (a)edge(e) (a)edge(g) (a)edge(h) (b)edge(f)
   (b)edge(i) (b)edge(j) (c)edge(e) (c)edge(f) (c)edge(g) (c)edge(i)
   (d)edge(e) (d)edge(f) (d)edge(h) (d)edge(j) (g)edge(i) (h)edge(j);
 \end{tikzpicture}
 &
 \begin{tikzpicture}[scale=0.09] 
 \draw[white] (-21,19) rectangle (22,-24);
 \vertex (a) at (7.20,4.80) {};
 \vertex (b) at (20.00,-0.00) {};
 \vertex (c) at (-7.20,4.80) {};
 \vertex (d) at (-20.00,-0.00) {};
 \vertex (e) at (0.00,-8.60) {};
 \vertex (f) at (7.20,-4.80) {};
 \vertex (g) at (0.00,-0.00) {};
 \vertex (h) at (0.00,-18.00) {};
 \vertex (i) at (-7.20,-4.80) {};
 \vertex (j) at (0.00,8.60) {};
 \vertex (k) at (0.00,18.00) {};
 \draw[sedge] (a)edge(b) (a)edge(f) (a)edge(g) (a)edge(j) (b)edge(f)
   (b)edge(h) (b)edge(k) (c)edge(d) (c)edge(g) (c)edge(i) (c)edge(j)
   (d)edge(h) (d)edge(i) (d)edge(k) (e)edge(f) (e)edge(g) (e)edge(h)
   (e)edge(i) (j)edge(k);
 \end{tikzpicture}
 &
 \begin{tikzpicture}[scale=0.09] 
 \draw[white] (-21,20) rectangle (22,-23);
 \vertex (a) at (20.00,-14.00) {};
 \vertex (b) at (-20.00,-14.00) {};
 \vertex (c) at (7.80,6.20) {};
 \vertex (d) at (-8.30,-3.90) {};
 \vertex (e) at (8.30,-3.90) {};
 \vertex (f) at (-7.80,6.20) {};
 \vertex (g) at (15.00,0.85) {};
 \vertex (h) at (20.00,14.00) {};
 \vertex (i) at (0.00,-2.10) {};
 \vertex (j) at (-20.00,14.00) {};
 \vertex (k) at (0.00,-7.80) {};
 \vertex (l) at (-15.00,0.85) {};
 \draw[sedge] (a)edge(b) (a)edge(g) (a)edge(h) (a)edge(i) (b)edge(j)
  (b)edge(k) (b)edge(l) (c)edge(d) (c)edge(g) (c)edge(h) (c)edge(j)
  (d)edge(i) (d)edge(k) (d)edge(l) (e)edge(f) (e)edge(g) (e)edge(i)
  (e)edge(k) (f)edge(h) (f)edge(j) (f)edge(l);
 \end{tikzpicture}
\end{tabular}
\end{center}
\caption{Laman graphs with $6\leq n\leq12$ vertices; for each~$n$ the graph with
the largest Laman number is shown.}
\label{figure:max_Laman_number}
\end{figure}
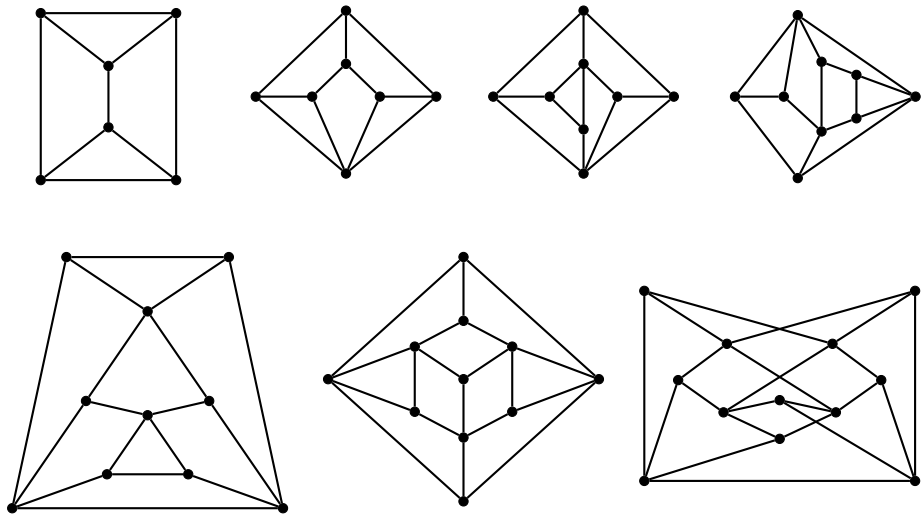

\bibliographystyle{amsalpha}

\newcommand{\etalchar}[1]{$^{#1}$}
\providecommand{\bysame}{\leavevmode\hbox to3em{\hrulefill}\thinspace}
\providecommand{\MR}{\relax\ifhmode\unskip\space\fi MR }
\providecommand{\MRhref}[2]{%
  \href{http://www.ams.org/mathscinet-getitem?mr=#1}{#2}
}
\providecommand{\href}[2]{#2}

\end{document}